\newtheorem{lemma}{Lemma}[section]
\newtheorem{theorem}{Theorem}[section]
\newtheorem{remark}{Remark}[section]
\newcommand{\sgn}{\,{\rm sgn}}
\def\rr{\mathbb{R}}
\def\eps{\varepsilon}
\def\bu{{\bf{u}}}
\def\bU{{\bf{U}}}
\def\bw{{\bf{w}}}
\def\bv{{\bf{v}}}
\def\ue{u^e}
\def\ve{v^e}
\newcommand{\LL}{L}
\title[Local and nonlocal evolution equations on graphs]{Asymptotic behaviour for local and nonlocal evolution equations on
metric graphs with some edges of infinite length}
\author[L. I. Ignat \and J. D. Rossi \and A. San Antolin ]{Liviu I. Ignat \and Julio D. Rossi \and Angel San Antolin
}
\address{L. I. Ignat
\hfill\break\indent Institute of Mathematics ``Simion Stoilow'' of the Romanian Academy,
Centre Francophone en Math\'{e}matique
\\21 Calea Grivitei Street \\010702 Bucharest, ROMANIA.
 \hfill\break\indent  ICUB,   The Research Institute of the University of Bucharest, University of Bucharest\\
36-46 Bd. M. Kogalniceanu,   050107, Bucharest, ROMANIA.
}
 \email{{\tt
liviu.ignat@gmail.com}  \hfill\break\indent {\it Web page: }{\tt
http://www.imar.ro/\~\,lignat}}
\address{J. D. Rossi
\hfill\break\indent Dpto. de Matem{\'a}ticas, FCEyN,
Universidad de Buenos Aires, \hfill\break\indent 1428, Buenos Aires,
ARGENTINA. } \email{{\tt jrossi@dm.uba.ar} \hfill\break\indent {\it
Web page: }{\tt http://mate.dm.uba.ar/$\sim$jrossi/}}
\address{A. San Antolin
\hfill\break\indent Departamento de An\'{a}lisis Matem\'{a}tico,
 Universidad de Alicante,
\hfill\break\indent Ap. correos 99, 03080,
Alicante, SPAIN. } \email{{\tt angel.sanantolin@ua.es}}
\begin{document}

\keywords{Nonlocal diffusion, local diffusion, quantum graphs, compactness arguments, asymptotic behaviour\\
\indent 2020 {\it Mathematics Subject Classification.} 35B40,
 45G10, 46B50.}

\begin{abstract} We study local (the heat equation) and nonlocal (convolution type problems
with an integrable kernel) evolution problems on a metric connected finite
graph in which some of
the edges have infinity length. We show that the asymptotic behaviour of the solutions to both local and nonlocal problems is given by the solution
of the heat equation, but on a star shaped graph in which there is only one node
and as many infinite edges as in the original graph.
In this way we obtain that the
compact component that consists in all the vertices and all the edges of finite length can be reduced
to a single point when looking at the asymptotic behaviour of the solutions.
For this star shaped limit problem the asymptotic behaviour
of the solutions is just given by the solution to the heat equation in a half line with a Neumann boundary condition at $x=0$ and initial
datum $(2 M/N ) \delta_{x=0}$ where $M$ is the total mass of the initial condition for our original problem and $N$ is the number of
edges of
infinite length.
In addition, we show that solutions to the nonlocal problem converge, when we rescale the kernel, to solutions to the heat equation
(the local problem), that is, we find a relaxation limit.
\end{abstract}

\maketitle

\section{Introduction} \label{sect-intro}

The aim of this paper is to study solutions to diffusion equations both local and non-local in a
metric graph. A metric graph is by definition a combinatorial graph where
the edges, denoted by $\{e_j\}$ are considered as intervals of the real line $\{I_j\}$ with a distance on each one of them.
These edges/intervals are glued together according to the combinatorial
structure. We assume here that at least one of the edges is not bounded (it has infinite length).

Metric graphs have received lot of attention in recent years
both from the point of view of pure mathematicians and also from potential applications.
The name quantum graph is used for a graph considered as an one-dimensional
singular variety and equipped with a differential operator (local or in some cases nonlocal).
There are several reasons for studying
quantum graphs. They naturally arise as simplified (due to reduced dimension)
models in mathematics, physics, chemistry, and engineering (e.g.,
nanotechnology and microelectronics), when one considers propagation of
waves of various nature (electromagnetic, acoustic, etc.) through a quasione-dimensional
system (often a mesoscopic one) that looks like a thin
neighborhood of a graph.  We refer to the survey \cite{MR2459876} and references therein.

{Let  $\Gamma$ be a metric graph 
$
\Gamma=\Gamma_f\cup \Gamma_\infty
$
where $\Gamma_f$ is made from the finite length edges of the graph $\Gamma$ whereas $\Gamma_\infty$ collects all the infinite edges.}
First of all in this paper, we introduce  $\Delta_{\Gamma}$, the Laplace operator  on a metric graph $\Gamma$. Associated to the Laplacian we
have an initial valued problem;  the classical heat equation on $\Gamma$. This problem is well-posed  and its solutions decay as time goes to infinity  in a similar way as the solutions of the  Cauchy problem
on the whole line  \cite{haeseler},  \cite{MR3985968}, \cite{MR2305092}.
We prove that the asymptotic behaviour (in terms of the existence of an asymptotic profile) of those solutions is comparable to the solution of the classical heat equation in the half line with Neumann boundary conditions at $x=0$ and initial data at $t=0$ given by a multiple of a Dirac mass at $x=0$.

%
%\begin{theorem}\label{as.behaviour.intro.99}
%	Let $\bu$ the solution to the heat equation in $\Gamma$ with {integrable initial data}. Then, its asymptotic behaviour
%	is given by 	
%	\begin{equation}
%\label{limit.1.intro.99}
%  \bu (t ) \sim \bU_M(t), \qquad \text{as}\ t\rightarrow \infty,
%\end{equation}
%where $M$ is the total mass of the initial datum $\bu_0$ and
%\begin{equation}
%\label{profil.w.intro.99}
%  \bU_M (x,t)=\frac {2M}N
%  	\frac1{\sqrt{4\pi t}}e^{-\frac{x^2}{4t}}.
%\end{equation}
% \end{theorem}

\begin{theorem}\label{as.behaviour.intro.99}
	Let   $\bu$ the solution to the heat equation in $\Gamma$ with {integrable initial datum}. Then, for any $1\leq p\leq \infty$,
	\begin{equation}
\label{limit.1.intro.99}
  t^{\frac12(1-\frac1p)}\|\bu(t)-\bU_M(t)\|_{L^p(\Gamma_{\infty})}\rightarrow 0, \qquad \text{as}\ t\rightarrow \infty,
\end{equation}
and
\begin{equation}
\label{limit.2.99}
 t^{\frac12 }  \| \bu(t)-\bU_M(t) \|_{L^p(\Gamma_{f})}\rightarrow 0, \qquad \text{as}\ t\rightarrow \infty,
\end{equation}
where $M$ is the total mass of the initial datum    and
\begin{equation}
\label{profil.w.intro.99}
  \bU_M (x,t)=\frac {2M}Nt^{-\frac12}\left\{
  \begin{array}{ll}
  	G(x/\sqrt t), &x\in \Gamma_{\infty},\\[5pt]
  	G(0),& x\in \Gamma_{f}.
  \end{array}
  \right.
\end{equation}
Here $G(s)$ is given by the classical Gaussian profile,
$$
G(s) = \frac1{\sqrt{4\pi}}e^{-\frac{s^2}{4}}.
$$
\end{theorem}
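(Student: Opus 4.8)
The plan is to establish the asymptotic profile by a combination of scaling and compactness, following the strategy that is standard for diffusion problems but adapted to the metric graph with infinite edges. First I would introduce the parabolic rescaling adapted to the graph: for $\lambda > 1$ set $\bu_\lambda(x,t) = \lambda \bu(\lambda x, \lambda^2 t)$, where the dilation $x \mapsto \lambda x$ acts on each infinite edge as the usual dilation of a half-line and collapses the finite part $\Gamma_f$ towards the single node (this is precisely the mechanism that makes the compact component disappear in the limit). One checks that $\bu_\lambda$ has constant $L^1$-mass equal to $M$ (by the mass-conservation of the heat semigroup on $\Gamma$, which should be available from the earlier sections), and that, by the decay estimates for the heat equation on $\Gamma$ recalled in the introduction, the family $\{\bu_\lambda\}$ is bounded in $L^p(\Gamma)$ with the $t^{\frac12(1-1/p)}$-scaling built in. The aim is to show $\bu_\lambda(\cdot,1) \to \bU_M(\cdot,1)$ strongly in $L^p$, which after undoing the scaling gives exactly \eqref{limit.1.intro.99} and \eqref{limit.2.99}; the stronger $t^{1/2}$ decay on $\Gamma_f$ reflects that on the collapsing finite part one gains an extra factor because $\bU_M$ is there just the constant $G(0)$.

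Next I would pass to the limit. Using parabolic regularity (interior estimates for the heat equation on each edge together with the Kirchhoff transmission conditions at the nodes) one obtains compactness of $\{\bu_\lambda\}$ in, say, $C_{loc}((0,\infty); L^1_{loc})$ along subsequences; the limit $\bv$ solves the heat equation on the limiting star-shaped graph $\Gamma_\star$ with one node and $N$ infinite edges, subject to the Kirchhoff condition at the node. The key point is to identify the initial datum of $\bv$: as $\lambda \to \infty$ the rescaled initial data $\lambda \bu_0(\lambda \cdot)$ converge, in the sense of measures on $\Gamma_\star$, to $M \delta_0$ distributed over the $N$ edges — here one must check that the mass sitting on $\Gamma_f$ does not escape but concentrates at the node, which uses only $\bu_0 \in L^1(\Gamma)$ and dominated convergence. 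Then, by symmetry of the star graph and the Kirchhoff condition, the solution of the heat equation on $\Gamma_\star$ with datum $M\delta_0$ equals on each infinite edge the solution on a half-line with Neumann condition at $0$ and datum $(2M/N)\delta_0$, which is exactly $\bU_M$; uniqueness of the limit problem then removes the subsequence.

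The main obstacle I expect is twofold. The first difficulty is the uniform control near the node under rescaling: one needs estimates for the heat kernel on $\Gamma$ (and on $\Gamma_\star$) that are uniform as the finite component shrinks, so that no mass is lost and the transmission/Kirchhoff conditions survive in the limit — this is where the precise heat-kernel bounds cited as \cite{haeseler,MR3985968,MR2305092} must be invoked, and possibly sharpened, to get equicontinuity up to the vertex. The second, more delicate point is upgrading convergence from $L^1_{loc}$ (or weak-$*$ as measures) to strong $L^p(\Gamma_\infty)$ and $L^p(\Gamma_f)$ with the sharp rates: this requires a tail estimate showing that the rescaled solutions have uniformly small mass outside a large ball, which one gets from the $L^1$-$L^\infty$ smoothing plus a second-moment or weighted estimate for the heat equation on $\Gamma$; once tails are controlled, the local strong convergence plus uniform integrability yields the global $L^p$ convergence, and interpolation with the $L^\infty$ bound gives all $1 \le p \le \infty$.
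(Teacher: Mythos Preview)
Your proposal is correct and follows essentially the same route as the paper: parabolic rescaling on the shrinking graph $\Gamma^\lambda$, compactness via energy/$H^1$ bounds plus a tail estimate, identification of the limit as the heat flow on the star graph with datum $M\delta_0$, and uniqueness to remove subsequences. The only notable technical differences are that the paper handles the vertex not through heat-kernel bounds but by extending test functions as constants on $\Gamma_f^\lambda$ and passing to the limit in the weak formulation, and that the $t^{1/2}$ rate on $\Gamma_f$ is obtained from a Gagliardo--Nirenberg inequality on the whole $\Gamma$ rather than directly from the collapse.
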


 %{A precise statement will be given in Section \ref{mainresults}.}
Notice that here the classical Gaussian profile appears as $\bU_M (x,t)$ is the
solution to the heat equation in a half-line with Neumann boundary conditions.

Now, let us turn our attention to nonlocal diffusion equations with a convolution kernel.
Equations of the form
\begin{equation} 
\label{a11}
u_t (x,t) = (J*u-u) (x,t) = \int_{\mathbb{R}} J(x-y)u(y,t) \, dy - u(x,t),
\end{equation}
and variations of it, have been recently widely used to model
diffusion processes. Here, and in what follows, $J:\mathbb{R} \mapsto \mathbb{R}_+$
belongs to $L^1(\rr) \cap L^1(\rr,|x|^2) $ and is assumed to be symmetric,
$J(z)$ is non-increasing for $z>0$, nonnegative in $\rr$ and strictly positive in neighbourhood of the origin.
To simplify a little the statements we will also assume that $A=1$ where
{\[
A:=\frac 12 \int_{\rr} z^2J(z)dz<\infty
\]}
and comment
on the general case when appropriate.  {Notice that, when $\varphi$ is a smooth function, a simple Taylor expansion gives
$$
\eps^{-3}\int _{I} J\Big(\frac{|x-y|}\eps \Big) (\varphi(y)-\varphi(x))dy \approx \left(\frac12 \int_{\rr} J(z) z^2 dx \right) \varphi_{xx} (x)
$$
for $x$ an interior point of an interval $I$. As a consequence for a general $A$ the limit profiles should be rescaled correspondingly.}

When $J$ has unit integral, as stated in \cite{F}, if
$u(x,t)$ is thought of as a density at the point $x$ at time $t$
and $J(x-y)$ is thought of as the probability distribution of
jumping from location $y$ to location $x$, then $\int_{\mathbb{R}}
J(y-x)u(y,t)\, dy = (J*u)(x,t)$ is the rate at which individuals
are arriving at position $x$ from all other places and $-u(x,t) =
-\int_{\mathbb{R}} J(y-x)u(x,t)\, dy$ is the rate at which they are
leaving location $x$ to travel to all other sites. This
consideration, in the absence of external or internal sources,
leads immediately to  the fact that the density $u$ satisfies
equation \eqref{a11}. These kind of nonlocal equations are used to model very different
applied situations, for example in biology (see \cite{CF},
\cite{MogEdel}), image processing (\cite{KOJ}, \cite{GO}),
particle systems (\cite{BodVel}), coagulation models
(\cite{FourLau}), etc.

Our next goal is to study nonlocal evolution problems with a convolution kernel on the metric graph $\Gamma$, that is,
we deal with
\begin{equation}\label{eq.1.intro.99}
\left\{
\begin{array}{ll}
\displaystyle \bu_t(t,x)=\int _{\Gamma} J(d(x,y)) (\bu(t,y)-\bu(t,x))dy, & x\in \Gamma , t > 0,\\[10pt]
\bu(0,x)=\bu_0(x),&x\in \Gamma.
\end{array}
\right.
\end{equation}
Here $d(x,y)$ is the natural distance in $\Gamma$, see Section \ref{sect-nota}.
Now, particles can jump inside the graph from position $x$ to position $y$ according to the probability kernel $J(d(x,y))$
(an interpretation similar to the one previously given also holds here).
Notice that particles may jump from one edge to another when the distance between points in the edges makes $J(d(x,y))$
positive.

For general nonlocal problems of this kind we refer to \cite{silvia} that contains closely related results concerning existence and uniqueness
of solutions and for different problems of this kind in the Euclidean setting to \cite{Cazacu,ChChR,CEQW,CEQW2,GMQ,IgnatIgnat,IR,IR2} and the book \cite{ellibro}. For the decay rate and the asymptotic profile in the real line (that coincides with the ones for the local heat
equation) we refer to \cite{ChChR}.
For a class of kernels that give exponential decay we quote \cite{IRSA}.

Here, we show that nonlocal problems are closely related to the local heat equation. In fact, one can
obtain solutions to the heat equation by approximating them by solutions to nonlocal problems.
To see this, for a rescaling parameter that acts in the kernel, $\varepsilon>0$, considering
$J_\eps (d(x,y)) = \eps^{-3}J (d(x,y) / \eps )$, we deal with a relaxation limit for
this nonlocal evolution problem and obtain that its solutions converge to the solutions of the local one
 when $\varepsilon$ goes to zero.

\begin{theorem}
	\label{th.relax.intro.99}
It holds that the solutions to the rescaled nonlocal problem with a fixed initial datum verify
$$
\bu^\eps \rightharpoonup \bu \ \text{ weakly in} \ L^2([0,\infty), L^2(\Gamma)).
$$
where  $\bu$ is the unique solution to the heat equation in $\Gamma$
with the same initial condition.
\end{theorem}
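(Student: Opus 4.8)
The plan is to use the standard energy/compactness approach to relaxation limits. First I would set up the natural functional framework: for the rescaled nonlocal problem with kernel $J_\eps(d(x,y))=\eps^{-3}J(d(x,y)/\eps)$ one has the associated bilinear form
\[
B_\eps(\bv,\bv)=\frac{\eps^{-3}}{2}\int_\Gamma\int_\Gamma J\Big(\frac{d(x,y)}{\eps}\Big)|\bv(y)-\bv(x)|^2\,dy\,dx,
\]
and multiplying the equation by $\bu^\eps$ and integrating in space yields the energy identity
\[
\frac12\frac{d}{dt}\|\bu^\eps(t)\|_{L^2(\Gamma)}^2 = -B_\eps(\bu^\eps(t),\bu^\eps(t))\le 0.
\]
Integrating in time on $[0,\infty)$ gives the uniform bounds $\|\bu^\eps(t)\|_{L^2(\Gamma)}\le\|\bu_0\|_{L^2(\Gamma)}$ for all $t$ and $\int_0^\infty B_\eps(\bu^\eps,\bu^\eps)\,dt\le\frac12\|\bu_0\|_{L^2(\Gamma)}^2$. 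Hence $\{\bu^\eps\}$ is bounded in $L^\infty([0,\infty),L^2(\Gamma))$, so (along a subsequence) $\bu^\eps\rightharpoonup\bu$ weakly-$*$ there, and in particular weakly in $L^2([0,\infty),L^2(\Gamma))$.

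The core of the argument is identifying the limit $\bu$ as the solution of the heat equation on $\Gamma$. For this I would pass to the limit in the weak formulation: for a test function $\vp\in C_c^\infty([0,\infty)\times\Gamma)$ (respecting the vertex/Kirchhoff conditions) one has
\[
-\int_0^\infty\!\!\int_\Gamma \bu^\eps\,\vp_t\,dx\,dt-\int_\Gamma\bu_0\,\vp(0,\cdot)\,dx
=\int_0^\infty\!\!\int_\Gamma\eps^{-3}\Big(\int_\Gamma J\Big(\tfrac{d(x,y)}{\eps}\Big)(\bu^\eps(y)-\bu^\eps(x))\,dy\Big)\vp(x)\,dx\,dt.
\]
By symmetrizing the right-hand side it becomes $-\frac12\int_0^\infty\!\!\int_\Gamma\int_\Gamma\eps^{-3}J(d(x,y)/\eps)(\bu^\eps(y)-\bu^\eps(x))(\vp(y)-\vp(x))\,dy\,dx\,dt$. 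Using the Taylor expansion highlighted in the introduction, for fixed smooth $\vp$ the operator $\eps^{-3}\int_\Gamma J(d(x,y)/\eps)(\vp(y)-\vp(x))\,dy$ converges (strongly in $L^2(\Gamma)$, using $A=1$) to $\Delta_\Gamma\vp$ away from the vertices, and the contributions from $\eps$-neighbourhoods of the (finitely many) vertices vanish; combined with the weak convergence $\bu^\eps\rightharpoonup\bu$ this identifies the right-hand side limit as $\int_0^\infty\int_\Gamma\bu\,\Delta_\Gamma\vp\,dx\,dt$. Therefore $\bu$ is a weak solution of $\bu_t=\Delta_\Gamma\bu$ with initial datum $\bu_0$; by the well-posedness of the heat equation on $\Gamma$ recalled in the introduction, $\bu$ is the unique such solution, and since the limit is independent of the subsequence the whole family converges.

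The step I expect to be the main obstacle is the rigorous treatment of the vertex contributions in the double-integral term. Near a vertex, geodesic balls in $\Gamma$ are not balls in $\R$: jumps can cross from one incident edge to another, so the local Taylor-expansion identity $\eps^{-3}\int J(d(x,y)/\eps)(\vp(y)-\vp(x))\,dy\approx\vp_{xx}(x)$ must be replaced by a careful analysis using the Kirchhoff (Neumann-type) matching conditions satisfied by $\vp$ at the vertices. One has to show that the extra cross-edge terms either cancel in the symmetrized bilinear form or are $o(1)$ as $\eps\to0$ because they are supported on a set of measure $O(\eps)$ while the integrand is $O(1)$ (here the $\eps^{-3}$ scaling is exactly balanced: the kernel is supported where $d(x,y)\lesssim\eps$, contributing a factor $\eps$, against $\eps^{-3}$ and two factors of $(\vp(y)-\vp(x))\sim\eps$). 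A secondary technical point is justifying that no spurious mass escapes at $t=\infty$; but since we only claim weak convergence on $[0,\infty)$ and the energy bound is global, the argument above suffices. This can be made clean by first proving the statement on finite time intervals $[0,T]$ and then noting that the limit is $T$-independent.
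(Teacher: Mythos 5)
Your overall strategy (energy bound, weak compactness, passage to the limit in the weak formulation, uniqueness) is the right family of ideas, but the identification step as you describe it contains a genuine gap: the claim that $L_\eps\varphi(x):=\eps^{-3}\int_\Gamma J\big(d(x,y)/\eps\big)(\varphi(y)-\varphi(x))\,dy$ converges \emph{strongly} in $L^2(\Gamma)$ to $\Delta_\Gamma\varphi$, with the vertex neighbourhoods negligible, is false. Carry out the expansion at a vertex $v$ of degree $N_v\geq 3$ (recall the paper assumes every vertex has degree at least three): for $x$ on an incident edge $e$ at distance $s=\eps\sigma$ from $v$, collecting the first--order Taylor terms coming from the same edge and from the other incident edges, and using continuity of $\varphi$ at $v$ together with the Kirchhoff relation $\sum_{e'}\varphi^{e'}_x(v)=0$, one finds
\begin{equation}
L_\eps\varphi(x)\;=\;-\,\eps^{-1}(N_v-2)\,\varphi^{e}_x(v)\,\sigma\!\int_\sigma^\infty J(z)\,dz\;+\;O(1),
\end{equation}
i.e.\ a boundary layer of height $\eps^{-1}$ and width $O(\eps)$, whose $L^2$-norm is of order $\eps^{-1/2}$ and blows up (the cancellation you hope for occurs only when $N_v=2$, that is, when there is no genuine vertex). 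Your measure-counting heuristic (``support of measure $O(\eps)$, two factors of size $O(\eps)$'') is valid only when \emph{both} differences are of the smooth test function; in the symmetrized form one factor is $\bu^\eps(y)-\bu^\eps(x)$, about which you only have the $L^2$ bound, and pairing the $\eps^{-1/2}$-large layer with a merely weakly convergent family cannot be shown to vanish (Cauchy--Schwarz gives $O(\eps^{-1/2})$). Nor can you repair this by restricting to test functions with $\varphi_x(v)=0$ at every vertex: for that class the resulting weak formulation encodes the flux (Kirchhoff) condition but not continuity at the vertices — e.g.\ decoupled Neumann heat flows on the separate edges satisfy the same identity — so it does not identify the limit.

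What is genuinely needed, and what the paper does, is to keep the symmetric bilinear form and put only \emph{one} difference quotient on the test function, controlling the other through the uniform nonlocal energy bound $\eps^{-3}\int_0^T\!\!\int_\Gamma\!\int_\Gamma J\big(d(x,y)/\eps\big)(\bu^\eps(t,y)-\bu^\eps(t,x))^2\,dx\,dy\,dt\leq\|\bu_0\|_{L^2(\Gamma)}^2$. Via the compactness results of the Appendix (Lemma \ref{convergence.h1.time}), this bound shows edge by edge that the limit $\bU$ belongs to $L^2((0,T),H^1)$ and that the same-edge contributions of the bilinear form converge to $\int\bU_x\varphi_x$ (with the constant $\tfrac12\int z^2J=1$); the adjacent-edge cross terms vanish by Lemma \ref{adjoint.intervals} combined with Cauchy--Schwarz against the energy (this is where the correct version of your measure-counting argument actually lives), and edges at positive distance are trivial since $d(x,y)\geq\alpha>0$. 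Crucially, the same energy bound forces the limit $\bU$ to be continuous at the vertices, which is how the Kirchhoff coupling is recovered without imposing anything on $\bu^\eps$; only after that can one integrate by parts onto $\varphi_{xx}$ and conclude by uniqueness. In short, the missing ingredient in your argument is the quantitative use of the nonlocal energy of $\bu^\eps$ (control of its difference quotients and approximate continuity at the vertices), not just its weak $L^2$ convergence; without it the vertex boundary layers cannot be handled.
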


Also we obtain the decay of the solutions to the nonlocal problem on metric graphs, and realize that the asymptotic  behaviour of those solutions is comparable to the solution of a related problem with the classical heat equation that we studied first.
%Here we have to assume, in addition to our previous hypotheses, that $J$ is compactly supported\footnote{necesitamos suporte compacto?} to deal with the behaviour in the finite part of the graph.

\begin{theorem}
	\label{th.nonlocal.1.intro.99}
Let $1\leq p<\infty$ and  $\bu_0\in L^1(\Gamma)\cap  L^p(\Gamma) $.  
For any $1\leq q<p$ the  solution
	to \eqref{eq.1.intro.99}  satisfies
	\begin{equation}
\label{limit.p.intro.99}
t^{\frac 12(1-\frac 1q)}  \|\bu(t)-\bU_{M}(t)\|_{L^q(\Gamma)} \rightarrow 0, \ \text{as}\ t\rightarrow \infty,
\end{equation}
%and
%\begin{equation}
%\label{limit.2.introK}
% t^{\frac12 } \Big\| \bu(t)- \frac {2M}{N\sqrt{4\pi}} t^{-\frac12} \Big\|_{L^p(\Gamma_{f})}\rightarrow 0, \qquad \text{as}\ t\rightarrow \infty.
%\end{equation}
where the asymptotic profile $\bU_M$ is given by \eqref{profil.w.intro.99}.
\end{theorem}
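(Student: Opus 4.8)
The plan is to remove the compact component by a parabolic rescaling and then combine a compactness argument with the relaxation mechanism of Theorem~\ref{th.relax.intro.99} and the local asymptotics of Theorem~\ref{as.behaviour.intro.99}. For $\mu>0$ put
\[
\bu_\mu(x,t):=\sqrt{\mu}\,\bu(\sqrt{\mu}\,x,\mu t),\qquad x\in\Gamma_\mu ,
\]
where $\Gamma_\mu$ is the metric graph with the same combinatorial structure as $\Gamma$ but with each finite edge rescaled to length $\ell_j/\sqrt{\mu}$ (the infinite edges unchanged). A change of variables shows that $\bu_\mu$ solves the nonlocal problem on $\Gamma_\mu$ with the rescaled kernel $J_\eps$, $\eps=\mu^{-1/2}$, exactly as in Theorem~\ref{th.relax.intro.99}; that mass is preserved, $\int_{\Gamma_\mu}\bu_\mu(\cdot,t)=M$; that
\[
\|\bu_\mu(\cdot,1)-\bU_M(\cdot,1)\|_{L^q(\Gamma_\mu)}=\mu^{\frac12(1-\frac1q)}\|\bu(\mu)-\bU_M(\mu)\|_{L^q(\Gamma)},
\]
with $\bU_M$ read through the profile \eqref{profil.w.intro.99} on $\Gamma_\mu$; and that $\bu_\mu(\cdot,0)=\sqrt{\mu}\,\bu_0(\sqrt{\mu}\,\cdot)\rightharpoonup M\delta_{x=0}$ as $\mu\to\infty$. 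As $\mu\to\infty$ the graphs $\Gamma_\mu$ converge to the star graph $\mathcal{S}_N$ with one node and $N$ infinite edges (the finite component having total length $O(\mu^{-1/2})$), $\bU_M$ on $\Gamma_\mu$ converges to $\bU_M$ on $\mathcal{S}_N$, and \eqref{limit.p.intro.99} becomes equivalent to
\[
\bu_\mu(\cdot,1)\longrightarrow\bU_M(\cdot,1)\qquad\text{in }L^q(\mathcal{S}_N)\text{ as }\mu\to\infty .
\]

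The next step is to prove that $\{\bu_\mu(\cdot,1)\}_{\mu\ge1}$ is relatively compact in $L^q(\mathcal{S}_N)$ for $1\le q<p$. Mass conservation and the basic $L^p$ decay estimate $\|\bu(t)\|_{L^p}\lesssim t^{-\frac12(1-\frac1p)}\|\bu_0\|_{L^1\cap L^p}$ give uniform $L^1$ and $L^p$ bounds, and the at most linear growth in time of the second moment of $|\bu|$ gives a uniform bound on the rescaled second moment; since $q<p$, tightness on the infinite edges then follows by interpolation (and the same inequality makes the $L^q$ contribution of the collapsing finite component vanish). For the local equicontinuity I would exploit the regularising effect of the nonlocal flow at large times: writing $\mathcal{L}\bu=\mathcal{K}\bu-a(\cdot)\bu$ with $\mathcal{K}\bu(x)=\int_\Gamma J(d(x,y))\bu(y)\,dy$ and $a(x)=\int_\Gamma J(d(x,y))\,dy\ge a_0>0$, Duhamel's formula
\[
\bu(t,x)=e^{-ta(x)}\bu_0(x)+\int_0^t e^{-(t-s)a(x)}(\mathcal{K}\bu)(s,x)\,ds
\]
exhibits $\bu(t)$ as an exponentially small rough term plus a term built from $\mathcal{K}\bu(s)$, which (approximating $J$ in $L^1(\mathbb{R})$ by Lipschitz kernels) carries bounded sets of $L^1(\Gamma)$ into sets with a uniform $L^1$ modulus of continuity; keeping track of the $t$-dependence and rescaling gives an $L^1$ modulus of continuity for $\bu_\mu(\cdot,1)$ uniform in $\mu$. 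Interpolating this $L^1$-compactness with the uniform $L^p$ bound yields relative compactness of $\{\bu_\mu(\cdot,1)\}$ in $L^q(\mathcal{S}_N)$; an analogous argument in time (using the corresponding decay of $\partial_t\bu$) gives compactness of the trajectories locally uniformly in $t\in(0,\infty)$, in $L^q$ over compact subsets of $\mathcal{S}_N$.

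Finally one identifies the limit. If $\bu_{\mu_k}\to\bw$ along a subsequence (after the standard reduction to $L^2$ data by density when $p<2$), passing to the limit in the weak energy formulation of the rescaled nonlocal equation on $\Gamma_{\mu_k}$ --- as in the proof of Theorem~\ref{th.relax.intro.99}, but now also letting the compact component shrink to the node and using the Taylor expansion of $J_\eps$ recalled in the Introduction --- shows that $\bw$ is a weak, hence classical, solution of the heat equation on $\mathcal{S}_N$ with the Kirchhoff--Neumann conditions at the node, with $\int_{\mathcal{S}_N}\bw(\cdot,t)=M$, the bounds $\|\bw(\cdot,t)\|_{L^1}\le M$ and $\|\bw(\cdot,t)\|_{L^\infty}\le Ct^{-1/2}$ inherited in the limit, and initial trace $M\delta_{x=0}$. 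Since $M\delta_{x=0}$ is invariant under permutations of the edges, uniqueness forces $\bw$ to be symmetric, so on each edge it is the half-line Neumann self-similar solution of mass $M/N$, that is $\bw=\bU_M$. As the limit is independent of the subsequence, the whole family $\bu_\mu(\cdot,1)$ converges to $\bU_M(\cdot,1)$ in $L^q(\mathcal{S}_N)$, and undoing the rescaling yields \eqref{limit.p.intro.99}.

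The main obstacle is the compactness step on the non-compact, $\mu$-dependent graphs $\Gamma_\mu$: the spatial equicontinuity, the tightness and the time equicontinuity all have to be made uniform in $\mu$, with the behaviour near the collapsing finite component (where the Kirchhoff junction is forming) and the decay along the infinite edges controlled simultaneously; and the limit passage requires a version of the relaxation result in which the underlying graph degenerates to $\mathcal{S}_N$ rather than staying fixed, which must be set up carefully, for instance on a fixed neighbourhood of the node together with finite truncations of the infinite edges and then exhausting.
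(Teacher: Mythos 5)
Your overall strategy is the paper's: the parabolic rescaling $\bu_\lambda(t,x)=\lambda\bu(\lambda^2t,\lambda x)$ on the rescaled graph $\Gamma^\lambda$ (your $\lambda=\sqrt\mu$), collapsing the compact core, identifying the limit as the solution of the heat equation on the star $\Gamma_\infty$ with datum $M\delta_{x=0}$, and undoing the scaling. The step that does not work, however, is the one you yourself flag as the main obstacle: the compactness of $\{\bu_\mu(\cdot,1)\}_\mu$. Your equicontinuity argument via Duhamel and a Lipschitz approximation of $J$ only produces an $L^1$ modulus of continuity of $\bu(\cdot,t)$ at the \emph{fixed} scale of the kernel: $\|\mathcal{K}\bu(s)(\cdot+H)-\mathcal{K}\bu(s)\|_{L^1}\lesssim\|J(\cdot+H)-J\|_{L^1}\,\|\bu(s)\|_{L^1}$, and this does not improve as $t$ grows because the semigroup has no regularizing effect (a point the paper stresses). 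What you need is $\sup_{\mu}\|\bu_\mu(\cdot+h,1)-\bu_\mu(\cdot,1)\|_{L^1}\to0$ as $h\to0$, which after undoing the scaling is control of $\|\bu(\cdot+\sqrt\mu\,h,\mu)-\bu(\cdot,\mu)\|_{L^1}$ for translations of size $\sqrt\mu\,h\to\infty$; the fixed-scale modulus $\|J(\cdot+\sqrt\mu h)-J\|_{L^1}$ is $O(1)$ there, not $o(1)$, so the one-step Duhamel bound yields no compactness for the rescaled family. Smoothness at scale $\sqrt t$ is precisely the crux; on the line it comes from Fourier analysis as in \cite{ChChR}, which is unavailable on the graph, and recovering it from iterated Duhamel terms would amount to a CLT-type analysis that your sketch does not provide.

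The paper's substitute, which your proposal never invokes, is the decay of the nonlocal energy: \eqref{grad.estimate} together with the $L^2$ decay gives $\mathcal{E}^J_\Gamma(\bu(t),\bu(t))\lesssim t^{-3/2}$ (see \eqref{grad.estimate.2}), and under the rescaling this becomes the uniform concentrated-kernel bound \eqref{nest.energy.lambda}. The BBM-type compactness results of the Appendix (Lemma \ref{convergence.h1.time}, used edge by edge and for pairs of adjacent edges) then give strong $L^2_{loc}$ compactness on $(\tau,T)\times\Gamma_\infty$, identify the limiting bilinear form as the local Dirichlet form $\int \bU_x\varphi_x$, and, through the gluing of adjacent rescaled edges across the shrinking core, yield the continuity at the emerging node; this is exactly the mechanism that replaces your missing equicontinuity and also handles your "degenerating graph" version of the relaxation limit. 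Two further points: the $L^p$ decay you take as "basic" is itself one of the results proved in the paper (Theorem \ref{th.nonlocal.decay.1}, via the Nash-type inequality of Lemma \ref{nash.like}); and your tightness via a linearly growing second moment assumes a finite second moment of $\bu_0$, which is not part of the hypotheses --- the paper obtains tail control from a cutoff computation on the equation alone, as in \eqref{tail.control}. The identification of the limit and the interpolation from $q=1$ to $1\leq q<p$ in your last step are in line with the paper, but they stand on the compactness step, which as proposed fails.
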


%\begin{theorem}
%	\label{th.nonlocal.1.intro.99}
%The solution to \eqref{eq.1.intro.99} has the same asymptotic behaviour as the one that holds for the heat equation, that is, 	
%	\begin{equation}
%\label{limit.1.intro.9988}
%  \bu (t ) \sim \bU_M(t), \qquad \text{as}\ t\rightarrow \infty,
%\end{equation}
%where $\bU_M$ is also given by \eqref{profil.w.intro}.
%\end{theorem}

We end this introduction with a very brief description of the methods used to prove our results.
Notice that here we can not use the Fourier transform as in \cite{ChChR,MR2356418} instead for the proof of the asymptotic behaviours,
both for the local and the nonlocal problems, we have to rely on energy and rescaling methods,
see \cite{MR2542582,IgnatIgnat} and also \cite{CM}. This scaling argument is delicate since it changes the graph in which the
rescaled solution is defined. When we pass to the limit what remains is the unbounded part of the graph,
while the finite edges collapse to a single point. This is the main reason why the asymptotic behaviour
(both for the local and nonlocal cases) is given by the Gaussian profile in a half line with the total mass
equally distributed according to the number of edges of infinite length.
For the relaxation limit we use ideas from \cite{ellibro,IgnatIgnat} where a compactness result is proved (see the Appendix
at the and of this paper).

{Related works concerning shrinking of a finite part of the graph can be found in \cite{Ber,Cacc}.
Remark that in our rescaling procedure every edge of the graph is rescaled, while in the previously mentioned references 
only some edges are changed. }

\medskip

The paper is organized as follows: first, in Section \ref{sect-nota} we introduce some notations and include the 
definition of the Laplcian in the graph;
in Section \ref{mainresults} we collect more precise statements of the main results of this paper (making clear the meaning of $\sim$
and the hypothesis on the initial datum
in Theorems \ref{as.behaviour.intro.99} and \ref{th.nonlocal.1.intro.99}). The proofs are postponed to the next two sections. Section
\ref{sect-local} is devoted to the study of the local problem (the heat equation), and in Section \ref{nonlocal-problem} we tackle
the nonlocal problem proving the relaxation limit and the asymptotic behaviour.  Finally, Section \ref{sect-Appendix} is an appendix where we include a compactness lemma that is needed when studying the relaxation
limit for the nonlocal equation.

\section{Notations and basic definitions} \label{sect-nota}

Before we present our results, we need to introduce some notations and basic definitions on metric graphs that we will use along the paper.
Let $\Gamma=(V,E)$ be a graph where $V$ is a set of vertices and $E$ the set of edges.
For each $v\in V$ we denote  $E_v=\{e\in E: v\in e\}$.
We assume that $\Gamma$ is a  finite connected graph. The edges could be of finite length and then their ends are vertices of $V$ or they have infinite length and then we assume that each infinite edge is a ray with a single vertex belonging to $V$ (see e.g. \cite{MR2459876} for more details on graphs with infinite edges). Here we assume that we do not have any terminal vertices, that is, vertices with degree one, and if we have a vertex with degree two, then we just remove it from our graph. Therefore,
we will always assume that the degree of each vertex is greater or equal than three.

We fix an orientation of $\Gamma$, that is, each edge $e$ is oriented.
Given an edge $e$, we denote by $I(e)$ the initial vertex and by $T(e)$ the terminal one.
In the case of infinite edges we have only initial vertices.
We identify every edge $e$ of $\Gamma$ with an interval $I_e$, where $I_e=[0,l_e]$ if the edge is finite and $I_e=[0,\infty)$ if the edge is infinite. This identification introduces a coordinate $x_e$ along the edge $e$. With this in mind we define a metric in $\Gamma$,
$d(x,y)$ stands for the distance between two points in the graph, that is given by the length of the minimal path that joins them.
In case both $x$ and $y$ belong to the same edge we have $d(x,y) = |x-y|$, when they belong to adjacent edges,
$d(x,y) = |x-e|+|y-e|$ with $e$ the vertex that is common to the two adjacent edges, when the minimal path involve three edges we have
$d(x,y) = |x-e_1|+|e_1-e_2| + |y-e_2|$, etc.  In the special case when the graph is star shaped (only one vertex and $N$ edges), we have
\[
d(x,y)=\left\{
\begin{array}{ll}
|x-y|, & x,y \in I_k, k=1,\dots, N,\\[5pt]
|x|+|y|, & x\in I_k, y\in I_j, k\neq j.
\end{array}
\right.
\]

In this way $\Gamma$ is a metric space that is often named as a metric graph, see e.g. \cite{MR2459876}. Moreover, we can write
\[
\Gamma=\Gamma_f\cup \Gamma_\infty
\]
where $\Gamma_f$ is made from the finite length edges of the graph $\Gamma$ whereas $\Gamma_\infty$ collects all the infinite edges.
In the special case in which there is only one infinite edge $\Gamma_\infty$ is just a half line.
Let $v$ be a vertex of $V$ and $e$ be an edge in  $E_v$. We set for finite edges $e$
$$j(v,e)=\left\{
\begin{array}{lll}
0& \text{if} &v=I(e), \\[5pt]
l_e& \text{if} & v=T(e)
\end{array}
\right.
$$
and
$$j(v,e)=0,\ \text{if}\ v=I(e)$$
for infinite edges.

We identify any function $\bu$ on $\Gamma$ with a collection $\{\ue\}_{e\in E}$ of functions $\ue$ defined on the edges  $e$ of $\Gamma$. Each $\ue$ can be considered as a function on the interval $I_e$.
We will use the notation $\bu$ to denote a function in the whole graph $\Gamma$ and $\ue$ to denote the function on the edge $e$.
In fact, with some abuse in the notation, we will use the same notation $\ue$ for both the function on the edge $e$ and the function on the interval $I_e$ identified with $e$.
For a function $\bu:\Gamma\rightarrow \rr$,  $\bu=\{u^e\}_{e\in E}$, and $f:\rr \mapsto \rr$ we denote by $f(\bu):\Gamma\rightarrow \rr$ the family
$\{f(u^e)\}_{e\in E}$, where  $f(u^e):e\rightarrow\rr$ is given by $f(u^e) (x_e) = f(u^e(x_e))$.

We introduce  $C(\Gamma)$ the space of continuous functions on $\Gamma$.
A function $\bu=\{\ue\}_{e\in E}$  is continuous on $\Gamma$ if and only if $\ue$ is continuous on $I_e$ for every $e\in E$, and we have continuity at the vertices, that is,
$$\ue(j(v,e))=u^{e'}(j(v,e')), \quad \forall \ e,e'\in E_v.$$
%We denote by $C(\Gamma)$ the space of continuous functions on $\Gamma$.

The space $\LL^p(\Gamma)$, $1\leq p<\infty$ consists of all functions   $\bu=\{u_e\}_{e\in E}$ on $\Gamma$ that belong to $\LL^p(I_e)$
for each edge $e\in E$ and we endow it with the norm
$$\|\bu\|_{\LL^p(\Gamma)}^p=\sum _{e\in E}\|u^e\|_{\LL^p(I_e)}^p.$$
Similarly, the space $\LL^\infty(\Gamma)$ consists of all functions that belong to $\LL^\infty(I_e)$ for each edge $e\in E$ and
$$\|\bu \|_{\LL^\infty(\Gamma)}=\max_{e\in E}\|u^e\|_{\LL^\infty(I_e)}.$$

The Sobolev space $H^1(\Gamma)$, consists in all   functions on $\Gamma$ that  belong to
$H^1(I_e)$ for each $e\in E$ and
$$\|\bu \|_{H^1(\Gamma)}^2=\sum _{e\in E}\|u^e\|_{H^1(e)}^2.$$
Sobolev spaces with higher differentiability $H^m(\Gamma)$, $m\geq 2$, can be defined in an analogous way.
The spaces $L^2(\Gamma)$ and $H^1(\Gamma)$ are Hilbert spaces with the inner products
$$(\bu,\bv)_{\LL^2(\Gamma)}=\sum _{e\in E}(\ue,\ve)_{\LL^2(I_e)}=\sum _{e\in E}\int _{I_e}\ue(x_e){\ve}(x_e)dx_e$$
and
$$(\bu,\bv)_{H^1(\Gamma)}=\sum _{e\in E}(\ue,\ve)_{H^1(I_e)}= \sum _{e\in E} \int _{I_e} \frac{d\ue}{dx} (x_e)  \frac{d\ve}{dx} (x_e) dx_e.$$
Notice that this defines an inner product in $H^1(\Gamma)$.

We now define the exterior normal derivative of a function $\bu=\{u^e\}_{e\in E}$  at the endpoints of the edges.
For each $e\in E$ and $v$ an endpoint of $e$ we consider the normal derivative of the restriction of $\bu$ to the edge $e$ of $E_v$ evaluated at $j(v,e)$ to be defined by:
$$\frac {\partial u^e}{\partial n_e}(j(v,e))=
\left\{
\begin{array}{lll}
-u_x^e(0+)&\text{if}& j(v,e)=0, \\[5pt]
u_x^e(l_e-)& \text{if}&j(v,e)=l_e  .
\end{array}
\right.
$$

We now introduce the Laplace operator $\Delta_\Gamma$ on the graph $\Gamma$. This is a standard procedure and we refer the interested reader to  \cite{MR1476363}. The operator  $\Delta_\Gamma$ has domain$$D(\Delta_\Gamma)=\Big\{\bu=\{u^e\}_{e\in E}\in H^2(\Gamma): \bu \ \text{continuous}\ \text{and}\  \sum _{e\in E_v} \frac {\partial u^e}{\partial n_e}(j(v,e))=0\quad
\text{for all}\ v\in V\Big\}$$
and it applies to any function $\bu\in D(\Delta_\Gamma)$ as follows
$$(\Delta_\Gamma \bu)^e=(u^e)_{xx}\quad \text{for all}\ e\in E .$$
In other words,  $D(\Delta_\Gamma)$ is the space of all continuous functions on $\Gamma$, $\bu=\{u^e\}_{e\in E}$,  such that for every edge $e\in E$,
$u^e\in H^2(I_e)$,  and the following Kirchhoff-type condition is satisfied,
$$\sum _{e\in E: T(e)=v} u^e_x(l_e-)-\sum _{e\in E: I(e)=v}u_x^e(0+)=0 \quad \text{for all} \ v\in V.$$
In the particular case when $\Gamma$ consists on only one edge these conditions reduce to the classical Laplacian with Neumann boundary conditions.

The quadratic form associated to $\Delta_\Gamma$ is given by
 $$\mathcal{Q}_{\Gamma}(\bu,\bu)=(\bu_x,\bu_x)_{L^2(\Gamma)}$$ for all $\bu\in D(\mathcal{Q}_\Gamma)$ where
\[
D(\mathcal{Q}_\Gamma)=D((-\Delta_\Gamma)^{1/2})=\Big\{\bu=\{u^e\}_{e\in E}\in H^1(\Gamma): \bu \ \text{continuous} \Big\}.
\]
In particular $D(\mathcal{Q}_\Gamma)$ with the $H^1(\Gamma)$-norm is a Banach space and
$\mathcal{Q}_\Gamma(\bu,\bu)=-(\bu,\Delta_\Gamma \bu)$ for all $\bu\in D(\Delta_\Gamma)$.
For other kinds of Laplace operators on metric graphs, we refer to \cite{MR3013208,MR2277618}.

It is easy to verify that $(\Delta_\Gamma, D(\Delta_\Gamma ) )$ is a linear, unbounded, self-adjoint, dissipative operator on $\LL^2(\Gamma)$, i.e.
$(\Delta_\Gamma\bu,\bu)_{\LL^2(\Gamma)}\leq 0$ for all $\bu\in D(\Delta_\Gamma)$.
Since $
\bigoplus_{e\in \Gamma} C_c^\infty(I_e)\subset D(\Delta_\Gamma)$
we obtain that $D(\Delta_\Gamma)$ is dense in any $L^p(\Gamma)$, $1\leq p<\infty$. The analysis of the operator $\Delta_\Gamma$ on $L^p(\Gamma)$-spaces will be discussed in Section \ref{sect-local}.

The main results of this paper are proved by rescaling the solutions with a parameter that we will call $\lambda$,
considering $\bu_\lambda(x)=\lambda\bu(\lambda x)$.
Since the equation is defined on edges that are either finite or infinite we have to rescale properly not only the solutions of our equations but also their domain of definition $\Gamma^\lambda=(V,E^\lambda)$. The graph $\Gamma^\lambda$ is obtained from $
\Gamma=(V,E)$ by rescaling properly the intervals $I_e^\lambda$ that parametrize the edges $E^\lambda$ of the graph $\Gamma^\lambda$.
We set
\[
I_e^\lambda=
\left\{
\begin{array}{ll}
	[0,l_e/\lambda], & \text{if}\ l_e<\infty, \\[5pt]
	[0,\infty),& \text{if}\ l_e=\infty.
\end{array}
\right.
\]
In this way we have $\Gamma^\lambda=\Gamma_f^\lambda\cup \Gamma_\infty$ since $\Gamma_\infty$ remains invariant under the above transformation.

\section{Main results}\label{mainresults}

In this section we collect the statements of the main results of this manuscript. We postpone their proofs to the next sections.  We also write related literature and motivation to research in local and nonlocal evolution problems on metric graphs.

\subsection{\bf The local evolution problem.}
Associated with the Laplacian we consider the heat equation on $\Gamma$:
\begin{equation}\label{eq.tree.intro}
\left\{
\begin{array}{ll}
\bu _t(t,x)-\Delta_\Gamma \bu(t,x)=0,& x\in \Gamma, t > 0 ,\\[5pt]
\bu(0,x)=\bu_0 (x) ,&   x\in \Gamma.
\end{array}
\right.
\end{equation}
We point out that the  well-posedness
for this problem is a direct consequence of Hille-Yosida-Phillips theorem. For example  if the initial data $\bu_0$ belongs to $L^1(\Gamma)$ then there exists a unique solution $\bu\in C([0,\infty),L^1(\Gamma))$ and it satisfies
	\begin{equation}
		\label{est.1.intro}
		\int _{\Gamma}\bu(t,x)dx=\int _{\Gamma} \bu_0(x)dx,
	\qquad
		\|\bu(t)\|_{L^1(\Gamma)}\leq \|\bu_0\|_{L^1(\Gamma)}.
	\end{equation}	
Moreover, the ultracontractivity of the semigroup generated by the operator $\Delta_\Gamma$ gives us that the solution $\bu$ also decays when the time increases
	\begin{equation}
		\label{est.3.intro}
		\|\bu(t)\|_{L^p(\Gamma)}\leq C(p,\Gamma) t^{-\frac12(1-\frac 1p)}\|\bu_0\|_{L^1(\Gamma)}, \qquad 1\leq p\leq \infty.
	\end{equation}	
More details about the well-poesedness of problem \eqref{eq.tree.intro} will be given in  Section \ref{sect-local}.
%
%Our main goal is to obtain the asymptotic behaviour for large $t$ of the solutions of the heat equation \eqref{eq.tree.intro}. First, we find some decay estimates in $L^p(\Gamma)$.
%\begin{theorem}
%	\label{estimates.intro} For any $\bu_0\in L^2(\Gamma)$ there exists an unique solution $\bu \in C([0,\infty),L^2(\Gamma))$ of \eqref{eq.tree.intro}
%	that satisfies $\bu \in C((0,\infty), D(\Delta_\Gamma))$.
%	For any $\bu_0\in L^1(\Gamma)\cap L^2(\Gamma)$ the solution to \eqref{eq.tree.intro} satisfies
%\end{theorem}

Starting from these estimates and using a rescaling procedure we can prove that for a connected graph with a finite number of edges the long time behaviour of the solution is given by the same problem in the star-shaped tree that is obtained when the bounded segments, the compact core of the graph, collapse to a single point. In this case the asymptotic profile is given by the solution to the classical heat equation in the half line with Neumann boundary condition at $x=0$ and as initial datum at $t=0$ a Dirac mass at $x=0$. This last solution is self similar with a well known Gaussian profile.
To obtain the limit profile for our evolution problem, we just repeat this Gaussian profile in any infinite line and multiply by $2 M/N$, being $M$ the total mass, and $N$ the number of infinity length segments.

\begin{theorem}\label{as.behaviour.intro}
	Let $\bu_0\in L^1(\Gamma)$ and  $\bu$ the solution of the problem \eqref{eq.tree.intro}. Then for any $1\leq p\leq \infty$
	\begin{equation}
\label{limit.1.intro}
  t^{\frac12(1-\frac1p)}\|\bu(t)-\bU_M(t)\|_{L^p(\Gamma_{\infty})}\rightarrow 0, \qquad \text{as}\ t\rightarrow \infty,
\end{equation}
and
\begin{equation}
\label{limit.2}
 t^{\frac12 }  \| \bu(t)-\bU_M(t) \|_{L^p(\Gamma_{f})}\rightarrow 0, \qquad \text{as}\ t\rightarrow \infty,
\end{equation}
where $M$ is the total mass of the initial datum $\bu_0$ and
\begin{equation}
\label{profil.w.intro}
  \bU_M (x,t)=\frac {2M}Nt^{-\frac12}\left\{
  \begin{array}{ll}
  	G(x/\sqrt t), &x\in \Gamma_{\infty},\\[5pt]
  	G(0),& x\in \Gamma_{f}.
  \end{array}
  \right.
\end{equation}
%Here $G(s)$ is given by the classical Gaussian profile,
%$$
%G(s) = \frac1{\sqrt{4\pi}}e^{-\frac{s^2}{4}}.
%$$
\end{theorem}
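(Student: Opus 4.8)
The plan is to prove Theorem~\ref{as.behaviour.intro} by a parabolic rescaling argument combined with energy and compactness estimates, taking \eqref{est.1.intro}--\eqref{est.3.intro} for \eqref{eq.tree.intro} as the starting point. By linearity and density it suffices to treat, say, bounded and compactly supported $\bu_0$, the general $L^1(\Gamma)$ case following by approximation via the contraction and decay bounds. For $\lambda>1$ set $\bv_\lambda(x,t)=\lambda\,\bu(\lambda x,\lambda^2t)$; this is exactly the solution of the heat equation on the rescaled graph $\Gamma^\lambda$ (with the same Kirchhoff conditions) with initial datum $\lambda\bu_0(\lambda\,\cdot)$, which still carries total mass $M$. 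All of \eqref{est.1.intro}--\eqref{est.3.intro} are invariant under this scaling, so $\int_{\Gamma^\lambda}\bv_\lambda(t)=M$ and $\|\bv_\lambda(t)\|_{L^p(\Gamma^\lambda)}\le C\,t^{-\frac12(1-\frac1p)}M$ uniformly in $\lambda$. Using the self-adjointness of $\Delta_\Gamma$ one also gets, uniformly in $\lambda$ and for $t$ in compact subsets of $(0,\infty)$, bounds on $\|\partial_x\bv_\lambda(t)\|_{L^2(\Gamma^\lambda)}$ and on $\partial_t\bv_\lambda(t)$ in a negative-order norm (these give equicontinuity in $t$), and a Davies--Gaffney type weighted energy inequality (the Kirchhoff condition makes the boundary terms in the integration by parts vanish) yields Gaussian tail bounds, uniform in $\lambda\ge1$, of the form $\int_{d(x,0)>r}|\bv_\lambda(x,t)|\,dx\le C\big(Me^{-cr^2/t}+\int_{d(y,0)>\lambda r/2}|\bu_0(y)|\,dy\big)$.

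Next I would pass to the limit $\lambda\to\infty$. The graphs $\Gamma^\lambda$ degenerate to the star-shaped graph $\Gamma_*$ with one vertex and $N$ infinite edges: the finite edges have length $l_e/\lambda\to0$ and collapse to the vertex, while $\Gamma_\infty$ is unchanged. Combining the uniform bounds above with an Arzel\`a--Ascoli argument in time and local compactness in space (plus a diagonal extraction over exhausting compacta of $\Gamma_*$), I would extract a subsequence with $\bv_{\lambda_k}\to\bv$, where $\bv$ solves the heat equation on $\Gamma_*$ with the Kirchhoff condition at the vertex and inherits $\int_{\Gamma_*}\bv(t)=M$, $\|\bv(t)\|_{L^p(\Gamma_*)}\le Ct^{-\frac12(1-\frac1p)}M$ and the Gaussian tail bound. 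The collapse of $\Gamma_f$ causes no trouble: the oscillation of $\bv_{\lambda_k}(\cdot,t)$ over $\Gamma_f^{\lambda_k}$ is at most $\|\partial_x\bv_{\lambda_k}(t)\|_{L^2}\,|\Gamma_f^{\lambda_k}|^{1/2}\to0$, and the mass it carries is $\le\|\bv_{\lambda_k}(t)\|_\infty\,|\Gamma_f^{\lambda_k}|\to0$, so in the limit the collapsing part converges to the single vertex value of $\bv$ and no mass is lost there.

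The heart of the argument, and the step I expect to be the main obstacle, is the identification of $\bv$. I claim $\bv(\cdot,t)\rightharpoonup M\delta_0$ weakly-$*$ as $t\to0^+$. The Gaussian tail bound, after first letting $\lambda_k\to\infty$ (so that $\int_{d(y,0)>\lambda_k r/2}|\bu_0|\to0$) and then $t\to0$, gives $\limsup_{t\to0^+}\int_{d(x,0)>r}|\bv(x,t)|\,dx=0$ for every $r>0$, i.e. all the mass concentrates at the vertex; the same tightness shows no mass escapes to infinity, so $\int_{\Gamma_*}\bv(t)=M$ persists up to $t=0$. These two facts force $\bv(\cdot,t)\rightharpoonup M\delta_0$. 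By uniqueness for the heat equation on $\Gamma_*$ in the class of solutions with uniformly bounded mass, $\bv$ is the unique solution with datum $M\delta_0$; since that datum is invariant under permuting the $N$ edges of $\Gamma_*$, so is $\bv$, and on each edge $\bv$ equals the Neumann heat kernel on $[0,\infty)$ with mass $M/N$ at the origin, i.e. $\frac{2M}{N}t^{-1/2}G(x/\sqrt t)$, with vertex value $\frac{2M}{N}t^{-1/2}G(0)$. Thus $\bv=\bU_M$ as in \eqref{profil.w.intro}, the limit does not depend on the subsequence, and $\bv_\lambda\to\bU_M$ for the whole family.

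Finally I would undo the scaling. Evaluating the convergence at $t=1$ and choosing $\lambda=\sqrt t$, a change of variables on the infinite edges (using that $\bU_M$ is exactly self-similar there) gives $\|\bv_{\sqrt t}(\cdot,1)-\bU_M(\cdot,1)\|_{L^p(\Gamma_\infty)}=t^{\frac12(1-\frac1p)}\|\bu(t)-\bU_M(t)\|_{L^p(\Gamma_\infty)}$. For $1\le p<\infty$ the left-hand side tends to $0$ because local (a.e.) convergence, the uniform $L^1\cap L^\infty$ bounds and the uniform Gaussian tails upgrade to convergence in $L^p(\Gamma_\infty)$; for $p=\infty$ one adds the equicontinuity/Arzel\`a--Ascoli on compacta to control the tails. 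This is \eqref{limit.1.intro}. On the collapsing part the convergence is in $L^\infty(\Gamma_f^\lambda)$, $\|\bv_\lambda(\cdot,1)-\bU_M(\cdot,1)\|_{L^\infty(\Gamma_f^\lambda)}\to0$; since $|\Gamma_f^\lambda|=|\Gamma_f|/\lambda\to0$, undoing the scaling turns this into $t^{\frac12}\|\bu(t)-\bU_M(t)\|_{L^p(\Gamma_f)}\to0$ — the faster rate $t^{1/2}$ for every $1\le p\le\infty$ — which is \eqref{limit.2}. The delicate points throughout are the uniform-in-$\lambda$ Gaussian/Gaffney estimates on the rescaled graphs and their use to pin down the limit's initial trace as $M\delta_0$; everything else is soft compactness and change of variables.
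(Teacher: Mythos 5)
Your overall strategy (parabolic rescaling $\bv_\lambda(x,t)=\lambda\bu(\lambda x,\lambda^2 t)$ on the shrinking graphs, uniform mass/decay/energy/tail bounds, compactness, identification of the limit as the self-similar solution with datum $M\delta_0$, and undoing the scaling at $t=1$) is exactly the paper's strategy, and the final change-of-variables step, including the faster rate \eqref{limit.2} on the collapsing core, is fine. The genuine gap is in the identification step, which is the heart of the theorem. You assert that the limit $\bv$ ``solves the heat equation on $\Gamma_*$ with the Kirchhoff condition at the vertex'', but your only argument concerning the collapsed core is the oscillation bound $\|\partial_x\bv_{\lambda}\|_{L^2}\,|\Gamma_f^{\lambda}|^{1/2}\to 0$, which yields \emph{continuity} of the limit at the star vertex and nothing about the flux balance $\sum_e \partial_n \bv^e(0)=0$. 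Without the flux condition, the uniqueness statement you invoke (``uniqueness in the class of solutions with uniformly bounded mass'' with initial datum $M\delta_0$) cannot be applied, and that uniqueness statement is itself nontrivial for measure data and is stated without proof. One could try to recover the flux condition from continuity plus exact mass conservation of the limit, but that requires justifying differentiation of $t\mapsto\int_{\Gamma_*}\bv(t)$ and the existence/integrability of the traces $\bv^e_x(0^+)$, none of which you address. The paper circumvents precisely this difficulty: it never extracts any vertex information from $\bu_\lambda$; instead it tests the rescaled equation against functions that are merely continuous at the node and \emph{constant on} $\Gamma_f^\lambda$ (so all boundary/flux terms disappear), obtains in the limit the very weak identity \eqref{limit.U} containing the term $M\varphi(0,0)$, and then proves uniqueness of $L^1_{loc}$ solutions of that identity by a Br\'ezis--Friedman-type duality argument (antisymmetric test functions to equalize the components, then a backward heat problem with Neumann data), citing \cite{MR700049}. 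Your proposal is missing an argument playing the role of these two steps.

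A secondary, more minor point: the Davies--Gaffney-type $L^1$ Gaussian tail bound, uniform in $\lambda\ge 1$ on the rescaled graphs, is asserted rather than proved, and its uniformity in $\lambda$ would itself need justification; the paper instead gets the tail control \eqref{tail.control} by a completely elementary cutoff computation ($\int_{|x|>2R}|\bu_\lambda(t)|\le \int_{|x|>R}|\bu_0|+CMt/R^2$), which is all that is needed both for the $L^1(\Gamma_\infty)$ upgrade of the local convergence and for pinning down the initial trace $M\delta_0$. Similarly, your $p=\infty$ case of \eqref{limit.1.intro} via Arzel\`a--Ascoli is vaguer than necessary; the paper gets it directly from the interpolation $\|w\|_{L^\infty}\lesssim\|w_x\|_{L^2}^{1/2}\|w\|_{L^2}^{1/2}$ together with the $t^{-3/4}$ gradient decay, which is also how it proves \eqref{limit.2} without ever needing uniform convergence on $\Gamma_f^\lambda$.
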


\subsection{\bf The nonlocal evolution problem.}

Now, let us look at nonlocal equations on a metric graph $\Gamma$.
We consider the evolution problem,
\begin{equation}\label{eq.1.intro}
\left\{
\begin{array}{ll}
\displaystyle \bu_t(t,x)=\int _{\Gamma} J(d(x,y)) (\bu(t,y)-\bu(t,x))dy, & x\in \Gamma , t > 0,\\[10pt]
\bu(0,x)=\bu_0(x),&x\in \Gamma.
\end{array}
\right.
\end{equation}
%Here $J$ is a continuous\footnote{cuidado con las condiciones sobre J}, compactly supported, symmetric and nonnegative kernel and $d(x,y)$ stands for the distance between two points in the graph, that is the length of the minimal path that joins them.

The kernel $J\in L^1(\rr) \cap L^1(\rr,|x|^2) $ is assumed to be symmetric,
$J(z)$ is {non-increasing} for $z>0$, $J$ is  {nonnegative} in $\rr$ and positive in neighborhood of the origin.
Notice that we are not assuming the solution $\bu$ to the nonlocal evolution problem \eqref{eq.1.intro} to be
continuous, and hence, we do not impose any
condition on the vertices. 

{
The assumption that $J\in L^1(\rr) $ guarantees that operator $$
	\bu \mapsto L(\bu) := \int_{\Gamma}J(d(x,y))(\bu(y)-\bu(x))dy
	$$
	is bounded between any $L^p(\Gamma)$ spaces, $1\leq p \leq \infty$.
	It generates a contraction semigroup in these spaces which is positive preserving and Markovian.
 Results about the well posedness are given in Theorem \ref{th.nonlocal.1}.
}

Both local and nonlocal models are similar in the sense that they share some properties such as existence and uniqueness of solutions,
conservation of the total mass of the initial datum and
the validity of the strong maximum principle.
However, solutions to the nonlocal problem do not have a regularizing effect in time. Solutions are as smooth in space for $t>0$
as the initial data are (this fact is due to the integrability of the kernel $J$).
If $\bu_0$ belongs to a suitable space $X$, then $\bu(t) \in X$
for all times and it is no better (there is no regularizing effect in strong contrast with the local heat equation).
This lack of smoothing is a major difficulty for the analysis of the asymptotic behaviour
of solutions since it implies some lack of compactness of the trajectories, $\{\bu(t)\}_{t>0}$. {Since there is a lack of regularizing effect we cannot analyze the ultracontractivity property in the classical sense (\cite{MR1103113}): for initial data in $L^1(\Gamma)\setminus L^\infty(\Gamma)$ the semigroup is not in $L^\infty(\Gamma)$ at any positive time. However, we can prove that for initial data in $L^1(\Gamma)\cap L^p(\Gamma)$ the solution has a certain decay in the $L^p(\Gamma)$ norm, see Theorem \ref{th.nonlocal.decay.1}.
}

\subsubsection{\bf A relaxation limit.}

First, we establish one more evidence to support that the non-local problem \eqref{eq.1.intro}
and the local heat equation \eqref{eq.tree.intro} are closely related.
To this end
 let us consider the following relaxation problem: for each $\eps>0$ consider the problem
\begin{equation}\label{eq.eps}
\left\{
\begin{array}{ll}
\displaystyle \bu_t^\eps(t,x)=\eps^{-3}\int _{\Gamma} J \Big(\frac{d(x,y)}\eps \Big) (\bu^\eps(t,y)-\bu^\eps(t,x))dy, & x\in \Gamma, t>0,\\[10pt]
\bu^\eps(0,x)=\bu_0(x),&x\in \Gamma.
\end{array}
\right.
\end{equation}
and analyze the limit problem when $\eps\rightarrow 0$. We recall that to simplify the presentation we assume that
	\begin{equation}
\label{second.momentum.J}
  \frac 12 \int_{\rr} z^2J(z)dz=1.
\end{equation}

\begin{theorem}
	\label{th.relax.intro}
		For any $\bu_0\in L^2(\Gamma)  $
	 it holds that
$$
\bu^\eps \rightharpoonup \bu \ \text{ weakly in} \ L^2([0,\infty), L^2(\Gamma)),
$$
where  $\bu\in C([0,\infty),L^2(\Gamma))$ is the unique solution of the heat equation \eqref{eq.tree.intro}
with the same initial condition.
\end{theorem}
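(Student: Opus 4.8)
\emph{Plan of the proof.} The plan is to combine $\eps$-uniform energy estimates with the compactness lemma of Section~\ref{sect-Appendix}, and then pass to the limit in the variational formulation of \eqref{eq.eps}. First I would fix $\bu_0\in L^2(\Gamma)$ and, for each $\eps>0$, take the solution $\bu^\eps\in C([0,\infty),L^2(\Gamma))$ of \eqref{eq.eps} provided by Theorem~\ref{th.nonlocal.1}. Introducing, for $\bw,\bv\in L^2(\Gamma)$, the symmetric bilinear form
\[
B_\eps(\bw,\bv)=\frac{\eps^{-3}}{2}\int_\Gamma\int_\Gamma J\Big(\frac{d(x,y)}{\eps}\Big)\big(\bw(y)-\bw(x)\big)\big(\bv(y)-\bv(x)\big)\,dx\,dy ,
\]
symmetrisation of \eqref{eq.eps} in the variables $x\leftrightarrow y$ gives $\frac{d}{dt}(\bu^\eps(t),\bv)_{L^2(\Gamma)}=-B_\eps(\bu^\eps(t),\bv)$ for every $\bv\in L^2(\Gamma)$, and the choice $\bv=\bu^\eps(t)$ yields $\frac12\frac{d}{dt}\|\bu^\eps(t)\|_{L^2(\Gamma)}^2=-B_\eps(\bu^\eps(t),\bu^\eps(t))\le0$. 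The two resulting bounds,
\[
\|\bu^\eps(t)\|_{L^2(\Gamma)}\le\|\bu_0\|_{L^2(\Gamma)}\quad(t\ge0),\qquad\int_0^\infty B_\eps(\bu^\eps(t),\bu^\eps(t))\,dt\le\tfrac12\|\bu_0\|_{L^2(\Gamma)}^2 ,
\]
uniform in $\eps$, are the only a priori information I would use.

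Next I would extract a limit. The first bound makes $\{\bu^\eps\}$ bounded in $L^\infty((0,\infty),L^2(\Gamma))$, hence in $L^2((0,T),L^2(\Gamma))$ for every $T>0$. The second bound is exactly the hypothesis of the compactness lemma of Section~\ref{sect-Appendix}: combined with the equation --- whose right-hand side, controlled by that same bound, is equi-bounded in a suitable dual space and thus makes $\bu^\eps$ equicontinuous in time --- it should give that $\{\bu^\eps\}$ is relatively compact in $L^2((0,T)\times\Gamma)$, so that (along a subsequence) $\bu^\eps\to\bu$ strongly there, and moreover that the limit satisfies $\bu\in L^2((0,T),H^1(\Gamma))$ with $\bu(t)$ continuous on $\Gamma$ for a.e.\ $t$, i.e.\ $\bu(t)\in D(\mathcal{Q}_\Gamma)$. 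Since $\bu^\eps(0)=\bu_0$ for every $\eps$, the strong convergence also gives $\bu(0)=\bu_0$.

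The last step is to identify $\bu$. For $\bv\in D(\Delta_\Gamma)$ and $\psi\in C^1_c([0,\infty))$, integrating $\frac{d}{dt}(\bu^\eps,\bv)=-B_\eps(\bu^\eps,\bv)$ against $\psi$ gives
\[
-\int_0^\infty(\bu^\eps(t),\bv)_{L^2(\Gamma)}\psi'(t)\,dt-(\bu_0,\bv)_{L^2(\Gamma)}\psi(0)=-\int_0^\infty B_\eps(\bu^\eps(t),\bv)\,\psi(t)\,dt ,
\]
whose left-hand side converges to the same expression with $\bu^\eps$ replaced by $\bu$. On the right I would split $B_\eps=B_\eps^{\mathrm{loc}}+B_\eps^{\mathrm{cross}}$ according to whether $x,y$ lie on the same edge or on two distinct edges. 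Since $\bv\in H^2(\Gamma)$ is bounded and Lipschitz on each edge and continuous at the vertices, $|\bv(y)-\bv(x)|\le C\,d(x,y)$ on $\Gamma$; splitting the cross region into $\{d(x,y)\le\delta\}$ and $\{d(x,y)>\delta\}$ and using the decay bound $J(r)\le Cr^{-3}$ (which follows from $J\in L^1(\rr,|x|^2)$ and the monotonicity of $J$), one checks that $\frac{\eps^{-3}}{2}\int_{\mathrm{cross}}J(d(x,y)/\eps)\,(\bv(y)-\bv(x))^2\,dx\,dy\to0$; by Cauchy--Schwarz in the measure $\eps^{-3}J(d(x,y)/\eps)\,dx\,dy$ and the uniform bound on $\int_0^\infty B_\eps(\bu^\eps,\bu^\eps)\,dt$ this forces $\int_0^\infty B_\eps^{\mathrm{cross}}(\bu^\eps(t),\bv)\psi(t)\,dt\to0$. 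For $B_\eps^{\mathrm{loc}}$ I would apply, on each interval $I_e$, the standard nonlocal-to-local convergence of the Dirichlet forms (see \cite{IgnatIgnat,ellibro}): after the change of variables $y=x+\eps z$ the difference quotients of $\bv$ converge uniformly to $\bv_x$ while those of $\bu^\eps$ converge weakly in $L^2$ to $\bu_x$ (here the strong $L^2$ convergence and $\bu\in L^2((0,T),H^1(\Gamma))$ are used), and the normalisation \eqref{second.momentum.J} turns the limit into $\int_{I_e}\bu_x\bv_x$ --- the Neumann form on each edge, with no boundary contribution --- so that $\int_0^\infty B_\eps^{\mathrm{loc}}(\bu^\eps(t),\bv)\psi(t)\,dt\to\int_0^\infty\mathcal{Q}_\Gamma(\bu(t),\bv)\psi(t)\,dt$. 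Letting $\eps\to0$ then gives
\[
-\int_0^\infty(\bu(t),\bv)_{L^2(\Gamma)}\psi'(t)\,dt-(\bu_0,\bv)_{L^2(\Gamma)}\psi(0)=-\int_0^\infty\mathcal{Q}_\Gamma(\bu(t),\bv)\,\psi(t)\,dt
\]
for all $\bv\in D(\Delta_\Gamma)$, hence for all $\bv\in D(\mathcal{Q}_\Gamma)$ by density. This is the variational characterisation of the solution of the heat equation \eqref{eq.tree.intro} with datum $\bu_0$, so by uniqueness (Hille-Yosida) $\bu$ is that solution, and --- the limit being independent of the subsequence --- the whole family converges, $\bu^\eps\rightharpoonup\bu$.

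The hard part will be the compactness step: since the nonlocal equation has no regularising effect, both the spatial compactness of $\{\bu^\eps\}$ and its equicontinuity in time must be squeezed out of the single uniform bound on the Dirichlet forms $B_\eps(\bu^\eps,\bu^\eps)$, on a graph containing unbounded edges --- this is precisely what the lemma of Section~\ref{sect-Appendix} is designed to provide. A secondary delicate point is verifying that the cross-edge interactions are asymptotically negligible, so that in the limit one only sees the Neumann form on each edge glued through continuity at the vertices, i.e.\ the Kirchhoff Laplacian $\Delta_\Gamma$, and no spurious vertex term survives.
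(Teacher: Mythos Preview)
Your approach is essentially the same as the paper's: uniform energy estimates, extraction of a weak limit, identification via the variational formulation, with cross-edge contributions shown to vanish and same-edge contributions converging to the Dirichlet form via the Appendix lemma. Two points deserve comment.

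First, you overstate the compactness. On the infinite edges you will only obtain strong convergence in $L^2((0,T),L^2_{loc})$ (part~4 of the Appendix lemma requires a bounded domain), not in $L^2((0,T)\times\Gamma)$. This is harmless, because weak convergence in $L^2((0,T),L^2(\Gamma))$ is all you actually use when passing to the limit in the variational identity, and the Appendix lemma (part~3) needs only weak convergence of $\bu^\eps$ to deliver the convergence of the same-edge bilinear forms.

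Second, and more substantively, you gloss over the one genuinely non-automatic step: why does the limit satisfy $\bu(t)\in D(\mathcal{Q}_\Gamma)$, i.e.\ why is it \emph{continuous at the vertices}? Applying the compactness lemma edge by edge gives only $\bu\in L^2((0,T),H^1(e))$ for each $e$ separately; nothing forces the traces at a common vertex to match. The paper's Step~II handles this by a gluing trick: given two adjacent edges $e,e'$ sharing a vertex $v$, one parametrises $e\cup e'$ by a single real interval $I\cup I'$ with $v$ at the origin; since $d(x,y)\ge |x-y|$ there and $J$ is non-increasing, the global energy bound restricts to
\[
\eps^{-3}\int_{I\cup I'}\int_{I\cup I'} J\Big(\frac{|x-y|}{\eps}\Big)\big(w^\eps(x)-w^\eps(y)\big)^2\,dx\,dy\le \|\bu_0\|_{L^2(\Gamma)}^2,
\]
and the Appendix lemma applied on the glued interval yields a limit in $H^1(I\cup I')$, hence continuous across $v$. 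This is precisely the mechanism by which the Kirchhoff coupling emerges in the limit; you should make it explicit rather than bundle it into ``the compactness lemma gives $\bu(t)\in D(\mathcal{Q}_\Gamma)$''.

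Your treatment of the cross-edge terms --- using global Lipschitz continuity of $\bv\in D(\Delta_\Gamma)$ together with $J(r)=o(r^{-3})$ and a $\delta$-splitting --- is a valid alternative to the paper's route (Lemma~\ref{adjoint.intervals} for adjacent edges, the minimum-distance argument for non-adjacent ones), at the price of requiring the more regular test functions; since you recover $D(\mathcal{Q}_\Gamma)$ by density at the end, this is fine.
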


Notice that, when $u$ is a smooth function, a simple Taylor expansion gives
$$
\eps^{-3}\int _{I} J\Big(\frac{|x-y|}\eps \Big) (u(y)-u(x))dy \approx \left(\frac12 \int_{\rr} J(z) z^2 dx \right) u_{xx} (x)
$$
for $x$ an interior point of an interval $I$. What is remarkable here is that in the limit as $\eps \to 0$ we recover the Kirchoff
conditions on the nodes without assuming any condition on $\bu^\eps$ (not even continuity). For other relaxation limits
of this kind we refer to \cite{CERW,MR} and the book \cite{ellibro} and references therein.

It will be interesting to analyze under which conditions on function $J$ we can recover in the relaxation limit more general operators $\Delta_{A,B}$ 
like the ones described in  \cite{MR2277618}, \cite{MR3013208}. These acts exactly as the laplacian on the edges but there are different coupling conditions at the vertices of the type
$A\bu'(v)+B\bu(v)=0$ with suitable conditions under the matrices $A$ and $B$. This will be investigated in a future work.

\subsubsection{\bf Asymptotic behaviour for the nonlocal problem.}
In spite of this lack of regularizing effect, in the one dimensional case, when the problem is posed in the whole $\mathbb{R}$,
is it shown in \cite{ChChR} that solutions to the equation
$$
u_t(t,x)=\int_{\mathbb{R}} J(x-y) (u(t,y)-u(t,x))dy
$$
have the same asymptotic behaviour as solutions to the classical heat equation $u_t= u_{xx}$.
Therefore, we expect that
solutions to \eqref{eq.1.intro} in the graph also have the same asymptotic behaviour
(the same decay bounds \eqref{est.3.intro} and asymptotic profile) as solutions to the heat equation, \eqref{eq.tree.intro}, that we analyzed first in Theorem \ref{as.behaviour.intro}.

\begin{theorem}
	\label{th.nonlocal.1.intro}
Let $1\leq p<\infty$.	For any $\bu_0\in L^1(\Gamma)\cap  L^p(\Gamma) $  the  solution
	to \eqref{eq.1.intro}  satisfies
\begin{equation}
\label{decay.nonlocal.lp.intro}
  \|\bu(t)\|_{L^p(\Gamma)}\leq \frac {C(\|\bu_0\|_{ L^1(\Gamma)},\|\bu_0\|_{ L^p(\Gamma)})}{(1+t)^{\frac 12(1-\frac 1p)}},\quad \forall \ t>0.
\end{equation}
For any $1\leq q<p$ the asymptotic profile $\bU_M$ is given by \eqref{profil.w.intro}:
	\begin{equation}
\label{limit.p.intro}
t^{\frac 12(1-\frac 1q)}  \|\bu(t)-\bU_{M}(t)\|_{L^q(\Gamma)} \rightarrow 0, \ \text{as}\ t\rightarrow \infty.
\end{equation}
%and
%\begin{equation}
%\label{limit.2.introK}
% t^{\frac12 } \Big\| \bu(t)- \frac {2M}{N\sqrt{4\pi}} t^{-\frac12} \Big\|_{L^p(\Gamma_{f})}\rightarrow 0, \qquad \text{as}\ t\rightarrow \infty.
%\end{equation}
\end{theorem}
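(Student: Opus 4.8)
\emph{Strategy.} The bound \eqref{decay.nonlocal.lp.intro} is the content of Theorem~\ref{th.nonlocal.decay.1}, proved by energy (Nash-type) estimates for the graph Dirichlet form; combined with the $L^1$-contraction of Theorem~\ref{th.nonlocal.1} and interpolation it holds in every $L^q(\Gamma)$, $1\le q\le p$. So the real content is the profile \eqref{limit.p.intro}, which I would prove by a rescaling argument that at once (i) converts the long-time limit $t\to\infty$ into a limit $\lambda\to\infty$ in the scaling parameter, (ii) relaxes the nonlocal operator to $\Delta$ as in Theorem~\ref{th.relax.intro}, and (iii) collapses the compact core $\Gamma_f$ to a single node, so that the limiting object is the heat equation on the star-shaped graph $\Gamma^{0}$ with $N$ infinite rays. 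These three effects are built into one and the same rescaling, which is precisely why the profile is the half-line Gaussian weighted by $2M/N$, exactly as in the local Theorem~\ref{as.behaviour.intro}.

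\emph{Step 1: the rescaling.} For $\lambda>0$ set $\bu_\lambda(\tau,x)=\lambda\,\bu(\lambda^{2}\tau,\lambda x)$ for $x\in\Gamma^{\lambda}$. Since $d_{\Gamma^{\lambda}}(x,z)=\lambda^{-1}d_{\Gamma}(\lambda x,\lambda z)$, a change of variables in \eqref{eq.1.intro} shows that $\bu_\lambda$ solves the rescaled problem \eqref{eq.eps} on $\Gamma^{\lambda}$ with $\eps=1/\lambda$ and datum $\bu_{0,\lambda}(x)=\lambda\bu_0(\lambda x)$. Mass is conserved, $\int_{\Gamma^{\lambda}}\bu_\lambda(\tau)\,dx=M$, and---this is the crucial point---the decay bound rescales without loss of constant:
\[
\|\bu_\lambda(\tau)\|_{L^{q}(\Gamma^{\lambda})}=\lambda^{1-\frac1q}\|\bu(\lambda^{2}\tau)\|_{L^{q}(\Gamma)}\le C\,\tau^{-\frac12(1-\frac1q)},\qquad 1\le q\le p,\ \ \tau>0,
\]
uniformly in $\lambda$, so the $\bu_\lambda$ are uniformly bounded in $L^{\infty}_{\mathrm{loc}}((0,\infty);L^{q}(\Gamma^{\lambda}))$. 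Unravelling the scaling, \eqref{limit.p.intro} is equivalent to $\|\bu_\lambda(1)-\bU_M(1)\|_{L^{q}(\Gamma^{\lambda})}\to 0$, where on $\Gamma^{\lambda}$ one reads $\bU_M(1)=\frac{2M}{N}G(\cdot)$ on the $N$ rays and $\frac{2M}{N}G(0)$ on the finite edges, whose total length is $O(\lambda^{-1})$; in particular $\Gamma^{\lambda}\to\Gamma^{0}$ and the finite part is harmless. Finally $\bu_{0,\lambda}\rightharpoonup M\delta_{0}$ weakly, since $\int_{\Gamma}\bu_0(y)\varphi(y/\lambda)\,dy\to M\varphi(\text{node})$ for $\varphi$ continuous at the node.

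\emph{Step 2: passing to the limit.} Because the profile depends only on the total mass, I would first reduce to good data: truncate $\bu_0=\phi_\delta+\psi_\delta$ with $\phi_\delta\in L^{1}(\Gamma)\cap L^{\infty}(\Gamma)$ compactly supported, $\|\phi_\delta\|_{L^{p}}\le\|\bu_0\|_{L^{p}}$ and $\|\psi_\delta\|_{L^{1}(\Gamma)}<\delta$, and split $\bu=\bu^{\phi_\delta}+\bu^{\psi_\delta}$. For the tail part, $\|\bu^{\psi_\delta}(t)\|_{L^1}\le\delta$ together with \eqref{decay.nonlocal.lp.intro} and interpolation make $\bu^{\psi_\delta}_\lambda(1)$ of size $O(\delta^{\theta})$ in every $L^{q}(\Gamma^{\lambda})$, $1\le q<p$, uniformly in $\lambda$. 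For the principal part, note that $\phi_\delta\in L^{1}\cap L^{\infty}$, so $\bu^{\phi_\delta}$ decays in every $L^{r}$, $r<\infty$, and the rescaled $\bu^{\phi_\delta}_\lambda$ are uniformly bounded in $L^{\infty}_{\mathrm{loc}}((0,\infty);L^{2}(\Gamma^{\lambda}))$; moreover the energy identity for \eqref{eq.eps} bounds $\int_{\tau_0}^{T}\!\eps^{-3}\!\iint_{\Gamma^{\lambda}\times\Gamma^{\lambda}}J(d(x,y)/\eps)\,(\bu^{\phi_\delta}_\lambda(\tau,y)-\bu^{\phi_\delta}_\lambda(\tau,x))^{2}\,dy\,dx\,d\tau$ uniformly in $\lambda$. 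By the compactness lemma of the Appendix (Section~\ref{sect-Appendix}), applied as in the proof of Theorem~\ref{th.relax.intro} but now on the varying graphs $\Gamma^{\lambda}$, every sequence $\lambda_k\to\infty$ has a subsequence along which $\bu^{\phi_\delta}_{\lambda_k}\to\bw$ in $L^{2}_{\mathrm{loc}}$ in space-time, with $\bw$ a solution of the heat equation on $\Gamma^{0}$; the Kirchhoff/Neumann condition at the single node is recovered from the Taylor-expansion computation of Theorem~\ref{th.relax.intro} without any continuity assumed on $\bu^{\phi_\delta}_\lambda$, and the merging of all $N$ rays at that node is what produces the factor $2M/N$. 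The concentration $\lambda\phi_\delta(\lambda\,\cdot)\rightharpoonup(M-O(\delta))\delta_{0}$ identifies $\bw$, by uniqueness for \eqref{eq.tree.intro}, as the heat evolution on $\Gamma^{0}$ of a multiple of $\delta_{0}$; by the $N$-fold symmetry of $\Gamma^{0}$ and mass conservation this is $\bU_{M-O(\delta)}$, hence the limit is subsequence-independent and the whole family converges. Letting $\delta\to0$ eliminates the $O(\delta)$ errors and gives $\bu_\lambda(1)\to\bU_M(1)$ in $L^{q}(\Gamma^{\lambda})$, $1\le q<p$; undoing the scaling is \eqref{limit.p.intro}. (Upgrading the local-in-space convergence to $L^{q}(\Gamma^{\lambda})$ needs uniform smallness of the tails along the rays: for $\bu^{\phi_\delta}_\lambda$ this follows from a uniform second-moment bound---finite since $\phi_\delta$ is compactly supported and $J\in L^{1}(\rr,|x|^{2})$---and for $\bu^{\psi_\delta}_\lambda$ it is immediate from $\|\psi_\delta\|_{L^{1}}<\delta$.)

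\emph{Main obstacle.} The delicate step is running the relaxation-compactness argument on the degenerating family $\Gamma^{\lambda}$: one has to control the mass and energy carried by the finite edges as they shrink to the node---in particular to show that no mass is lost there in the limit, where $\int_{\Gamma^{\lambda}}\bu_\lambda=M$ and the tightness provided by \eqref{decay.nonlocal.lp.intro} enter---and one has to make sense of the Neumann/Kirchhoff condition at the limiting node for merely $L^{2}$ functions, with the correct number $N$ of incident edges. Once this is in place, the identification of the self-similar limit is forced by symmetry and conservation of mass, exactly as for the heat equation in Theorem~\ref{as.behaviour.intro}.
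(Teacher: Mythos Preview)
Your proposal is correct and follows essentially the same route as the paper: rescale $\bu_\lambda(t,x)=\lambda\bu(\lambda^2 t,\lambda x)$ on the shrinking graph $\Gamma^\lambda$, use the uniform $L^q$--decay and energy bounds to extract compactness via the Appendix lemmas, identify the limit as the weak solution of the heat equation on the star $\Gamma_\infty$ with datum $M\delta_0$, and upgrade local to global convergence by tail control. The key technical steps---showing $\bU\in L^2_{\mathrm{loc}}((0,\infty),D(\mathcal{Q}_{\Gamma_\infty}))$ by patching adjacent rays through the collapsing core, and testing against functions extended as constants on $\Gamma_f^\lambda$---are the same as in the paper's Steps~A--B.

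There is one genuine difference worth noting. The paper works directly with $\bu_0\in L^1\cap L^p$ and, to get the $L^2$--energy bound \eqref{nest.energy.lambda} needed for compactness, explicitly restricts to $p\ge 2$, referring to the compactness tools of \cite{IgnatIgnat} for the remaining range $1<p<2$. Your density argument---splitting $\bu_0=\phi_\delta+\psi_\delta$ with $\phi_\delta\in L^1\cap L^\infty$ compactly supported, treating the principal part via $L^2$--compactness (available since $\phi_\delta\in L^2$), and absorbing the $\psi_\delta$--contribution by interpolation between the $L^1$--contraction and the $L^p$--decay---handles all $1\le p<\infty$ at once and is a clean way around this restriction. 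Your tail control via second moments for compactly supported data (finite because $J\in L^1(\rr,|x|^2)$) is also a legitimate alternative to the paper's cutoff argument in the spirit of \eqref{tail.control}. One small caveat: the identification of the limit profile is not quite ``by $N$-fold symmetry'' alone---the paper argues uniqueness for the weak formulation \eqref{limit.U} on $\Gamma_\infty$ via a Br\'ezis--Friedman duality argument \cite{MR700049}, which is what actually forces every subsequential limit to be $\bU_M$; you should invoke that rather than symmetry.
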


\begin{remark} {\rm The asymptotic profile is the same as the one that we obtained for the local problem. If we
don't assume $\frac 12 \int_{\rr} z^2J(z)dz=1$, we just let
$0<A=\frac 12 \int_{\rr} z^2J(z)dz<+\infty$ and we obtain that the asymptotic profile $ \bU_{A,M}$ verifies $\bU_{t}=A\Delta_{\Gamma}\bU$ and $\bU(0)=M\delta_0$ so $\bU_{A,M}(t,x)=\bU_M(At,x)$.
The convergence of the compact part of the graph can be slightly improved in the following sense: for a fixed  $1<p^*<p$ the following holds  for any $1\leq q\leq p^*$:
\[
t^{\frac 12(1-\frac 1{p^*})}  \|\bu(t)-\bU_{M}(t)\|_{L^q(\Gamma_f)} \rightarrow 0, \ \text{as}\ t\rightarrow \infty.
  \]
}
\end{remark}

\section{Asymptotic behaviour for the heat equation} \label{sect-local}
\setcounter{equation}{0}

Let us consider the heat equation on  $\Gamma$:
\begin{equation}\label{eq.tree}
\left\{
\begin{array}{ll}
\bu _t(t,x)-\Delta_\Gamma \bu(t,x)=0,& x\in \Gamma, t> 0 ,\\[5pt]
\bu(0,x)=\bu_0(x),&   x\in \Gamma.
\end{array}
\right.
\end{equation}

In this section we show that for a general graph the long time behaviour of the solution is given by the problem in a star-shaped tree
that is obtained from $\Gamma$ when the bounded segments, the compact core of the graph, collapse to a single point.
In this case the asymptotic profile is given by the solution in the half line with Neumann boundary condition at $x=0$ and initial data a Dirac mass at $x=0$
multiplied by an adequate constant that takes into account the number of infinite edges and the mass of the initial condition
(we multiply by $2M/N$, being $M$ the total mass, and $N$ the number of infinit length segments).

{We now present few well know facts about the semigroup generated by the Laplacian on metric graphs.  For the sake of completeness 
we prefer to include here some details for the interested reader.}
In the $L^2$-setting, using the fact that $(\Delta_\Gamma, D(\Delta_\Gamma))$ is maximal dissipative  we easily obtain that it generates a strongly continuous semigroup of contractions in $L^2(\Gamma)$ that we denote by $S(t)$.
This means that for any $\bu_0\in D(\Delta_\Gamma)$ there exists a unique solution $\bu(t)=S(t)\bu_0$ of system \eqref{eq.tree} that satisfies
$\bu\in C([0,\infty),D(\Delta_\Gamma))\cap C^1([0,\infty),\LL^2(\Gamma)).$ Moreover since $\Delta_\Gamma$ is self adjoint we also have more regularity on the solution, in particular $\bu\in C^\infty((0,\infty),D(\Delta_\Gamma))$.
Thus, for any $\bu_0\in D(\Delta_\Gamma)$ system \eqref{eq.tree} can be written in an explicit way as follows:
\begin{equation}\label{system1exp}
\left\{
\begin{array}{ll}
u^e\in C([0,\infty),H^2(I_e))\cap C^1([0,\infty),L^2(I_e)),& e\in E,\\[5pt]
u^e_t(t,x)- u_{xx}^e(t,x)=0,& x\in I_e, t> 0, \\[5pt]
\text{for all} \ v\in V, \ u^{e}(t,j(v,e))=u^{e'}(t,j(v,e')),& \forall e,e'\in E_v, t> 0,\\[5pt]
\displaystyle\sum _{e\in E: T(e)=v} u^e_x(t,l_e-)-\sum _{e\in E: I(e)=v}u_x^e(t,0+)=0, & \text{for all} \ v\in V.
\end{array}
\right.
\end{equation}

In the context of the $L^p$-spaces, $1\leq p<\infty$ there are several papers on the subject.
We refer to recent paper \cite{MR3985968} where using a direct method it is proved that more general second order operators under more general conditions generate a strongly continuous semigroup of contractions in $L^p(\Gamma)$, $1\leq p<\infty$. In particular, $\Delta_\Gamma$ with Kirchhoff
conditions fulfill the hypotheses in the previously mentioned reference \cite{MR3985968}.
Other approaches are possible. For example, if one starts from the $L^2$-theory and then use interpolation theory
one can obtain similar results. We cite here \cite{MR2291812} and \cite{MR2305092} where  the authors treat graphs with {finitely many edges
 of finite length} but general couplings. Their method allow to first prove that the $L^2$-semigroup is positive and contractive in $L^\infty(\Gamma)$. The proof relays on the characterization in \cite{MR2124040} based on Kato-type inequalities, i.e.
\begin{equation}
\label{kato.infty}
  (\Delta_\Gamma \bu, (|\bu|-1)^+\sgn (\bu))\leq 0, \qquad \forall \ \bu\in D(\Delta_\Gamma).
\end{equation}
Observe that when $\|\bu_0\|_{L^\infty(\Gamma)}\leq 1$ and we try to show that the solution satisfies $\|\bu (t)\|_{L^\infty(\Gamma)}\leq 1$ for all positive time, one derivative in time of $\|(\bu(t)-1)^+\|_{L^2(\Gamma)}^2$ leads to
\[
\frac 12 \int_{\Gamma}[(\bu(t,x)-1)^+]^2 dx=\int_{\Gamma}
\bu_t(t,x)  (\bu(t,x)-1)^+ \sgn(\bu(t,x))dx= (\Delta_\Gamma \bu, (|\bu|-1)^+\sgn (\bu))
\]
and inequality \eqref{kato.infty} gives the desired result.

A classical argument (see \cite[p.~56]{MR2124040}) involving Riesz-Thorin interpolation theory shows that $S(t)=e^{t\Delta_\Gamma}$ can be extended to a strongly continuous
semigroup of contractions in $L^p(\Gamma)$ for each $2\leq p<\infty$. By duality $e^{t\Delta^*_\Gamma}$  is strongly continuous and contractive for $1<p\leq 2$. An approximation on compact sets shows that it is also in $L^1(\Gamma)$. This also follow arguing as in \cite[Section~7.2.1]{Arendt}.

A special attention has been paid to the $L^1-L^\infty$ estimates of the semigroup. In \cite{haeseler}, see also \cite{MR2291812} for compact graphs, it has been proved that
\begin{align*}
\label{nash-haeseler}
  \|\bu\|_{L^2(\Gamma)}\leq C \|\bu _x\|_{L^2(\Gamma)}^{1/3}
   \|\bu \|_{L^1(\Gamma)}^{2/3}
\end{align*}
holds for a graph having some infinite edges attached to the compact part of the graph. In view of \cite{MR1103113} these estimates allow to prove the ultracontractivity of the semigroup, i.e.
\begin{equation}
\label{l2-linfty}
\|e^{t\Delta_\Gamma}\varphi \|_{L^\infty(\Gamma)}\leq C t^{-1/4}
\|\varphi\|_{L^2(\Gamma)}, \qquad \forall \ \varphi\in L^2(\Gamma).
\end{equation}
Duality arguments give us a similar $L^1-L^2$ estimates, which together with the previous one show an $L^1-L^\infty$ estimate on the semigroup. This shows that for any $\varphi\in L^1(\Gamma)$ the solution $\bu$ belongs to
$C((0,\infty),L^p(\Gamma)) $ for any $1\leq p<\infty$. In particular we can use the $L^2$-theory for $t>t_0$ with $t_0>0$ arbitrary to obtain regularity properties for the solution and justify all the formal computations done in the rest of the paper.

Therefore, the following existence and uniqueness result holds.

\begin{theorem}\label{existence}
For any $\bu_0\in D(\Delta_\Gamma)$ there exists a unique solution $\bu(t)$ of system \eqref{eq.tree} that satisfies
$\bu\in C([0,\infty),D(\Delta_\Gamma))\cap C^1([0,\infty),\LL^2(\Gamma)).$
Moreover, for any $\bu_0\in \LL^p(\Gamma)$, $1\leq p<\infty$, there exists a unique solution $\bu\in C([0,\infty),\LL^p(\Gamma))$ that satisfies
$$\|\bu(t)\|_{\LL^p(\Gamma)}\leq \|\bu_0\|_{\LL^p(\Gamma)}\quad \text{for all}\ t\geq 0.$$
\end{theorem}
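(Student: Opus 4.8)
The plan is to obtain the $L^2$ statement directly from semigroup theory and then bootstrap to every $L^p$, $1\le p<\infty$, by means of a Kato-type inequality that encodes the vertex conditions, followed by interpolation and duality. First I would treat $\bu_0\in D(\Delta_\Gamma)$. As already observed, $(\Delta_\Gamma,D(\Delta_\Gamma))$ is self-adjoint and dissipative on $L^2(\Gamma)$: self-adjointness follows by integrating by parts on each edge, the boundary contributions at a vertex $v$ collapsing to $\big(\sum_{e\in E_v}\frac{\partial u^e}{\partial n_e}(j(v,e))\big)$ times the common value of the functions at $v$ (by continuity), which vanishes by the Kirchhoff condition; dissipativity is $(\Delta_\Gamma\bu,\bu)=-\|\bu_x\|_{L^2(\Gamma)}^2\le0$. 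Being self-adjoint and dissipative, it is maximal dissipative, so Hille--Yosida--Phillips (equivalently the spectral theorem) produces a strongly continuous contraction semigroup $S(t)=e^{t\Delta_\Gamma}$ with $S(t)\bu_0\in C([0,\infty),D(\Delta_\Gamma))\cap C^1([0,\infty),L^2(\Gamma))$, and analyticity yields the extra smoothing $\bu\in C^\infty((0,\infty),D(\Delta_\Gamma))$. Uniqueness is the energy estimate: a solution $\bw$ with $\bw(0)=0$ satisfies $\frac{d}{dt}\|\bw(t)\|_{L^2(\Gamma)}^2=2(\Delta_\Gamma\bw,\bw)\le0$, hence $\bw\equiv0$.

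Next I would establish the $L^\infty$-contractivity of $S(t)$, which is the crux. For this I would prove the Kato-type inequality \eqref{kato.infty}: given $\bu\in D(\Delta_\Gamma)$, set $\phi=(|\bu|-1)^+\sgn(\bu)=(\bu-1)^+-(-\bu-1)^+$, which lies in $H^1(\Gamma)$ and is continuous on $\Gamma$ since $\bu$ is. Integrating $(\Delta_\Gamma\bu,\phi)=\sum_e\int_{I_e}u^e_{xx}\phi^e$ by parts on every edge, the vertex terms once more reorganize into $\sum_v \phi(v)\sum_{e\in E_v}\frac{\partial u^e}{\partial n_e}(j(v,e))=0$, so $(\Delta_\Gamma\bu,\phi)=-\sum_e\int_{I_e}u^e_x\,\phi^e_x=-\sum_e\int_{\{|u^e|>1\}}(u^e_x)^2\le0$, using $\phi^e_x=\mathbf 1_{\{|u^e|>1\}}u^e_x$ a.e. Then, for $\|\bu_0\|_{L^\infty(\Gamma)}\le1$, the quantity $t\mapsto\|(|\bu(t)|-1)^+\|_{L^2(\Gamma)}^2$ has derivative $2(\Delta_\Gamma\bu(t),(|\bu(t)|-1)^+\sgn(\bu(t)))\le0$ and vanishes at $t=0$, hence is identically zero, i.e. $\|S(t)\bu_0\|_{L^\infty(\Gamma)}\le\|\bu_0\|_{L^\infty(\Gamma)}$; by density this holds on $L^2(\Gamma)\cap L^\infty(\Gamma)$.

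To finish, Riesz--Thorin interpolation between the $L^2$- and $L^\infty$-contractivities gives $\|S(t)\|_{L^p\to L^p}\le1$ for $2\le p\le\infty$; since $D(\Delta_\Gamma)$ is dense in $L^p(\Gamma)$ for $p<\infty$, $S(t)$ extends to a strongly continuous contraction semigroup there, and $\bu(t)=S(t)\bu_0$ is the desired solution with $\|\bu(t)\|_{L^p(\Gamma)}\le\|\bu_0\|_{L^p(\Gamma)}$. Self-adjointness gives $S(t)^*=S(t)$, so the bound dualizes to $1<p<2$; the endpoint $p=1$ follows by approximation on compact subsets of $\Gamma$ (as in \cite{haeseler,Arendt}) or, alternatively, from the level-one Kato inequality $(\Delta_\Gamma\bu,\sgn_\delta(\bu))\le0$ for smooth monotone regularizations $\sgn_\delta$ of the sign function, which yields $\frac{d}{dt}\|\bu(t)\|_{L^1(\Gamma)}\le0$ after letting $\delta\to0$ on the dense class $D(\Delta_\Gamma)$. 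Uniqueness in $L^p$ follows by interpreting the solution in the mild sense or by pairing the difference of two solutions with the adjoint evolution $S(\cdot)\psi$, $\psi\in C_c^\infty$.

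The step I expect to be the main obstacle is the rigorous justification of the Kato-type inequality \eqref{kato.infty} (and its $L^1$ analogue): that the truncation $(|\bu|-1)^+\sgn(\bu)$ is an admissible $H^1(\Gamma)$ test function for $H^2$ data, that the edgewise integration by parts is licit, and --- the genuinely graph-theoretic point --- that the traces of this truncation at each vertex are compatible with the Kirchhoff condition so that every boundary term cancels. Everything past these inequalities is routine interpolation-and-duality bookkeeping.
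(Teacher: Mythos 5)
Your proposal is correct and follows essentially the same route as the paper: maximal dissipativity of $(\Delta_\Gamma,D(\Delta_\Gamma))$ and Hille--Yosida--Phillips for the $L^2$ statement, the Kato-type inequality \eqref{kato.infty} for $L^\infty$-contractivity, then Riesz--Thorin interpolation for $2\le p<\infty$, duality for $1<p\le 2$, and approximation (or the $L^1$ Kato argument) for $p=1$. The only difference is that you spell out the edgewise integration by parts and the cancellation of vertex terms behind \eqref{kato.infty}, which the paper delegates to the cited references, so the content is the same.
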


\begin{remark}
	{\rm For any  $\bu_0\in D(\Delta_\Gamma)\cap L^1(\Gamma)$, there exists a unique solution $\bu\in C([0,\infty),D(\Delta_\Gamma)\cap L^1(\Gamma))$.}
\end{remark}

In the next result we obtain some decay bounds for the solutions.
%Notice that \eqref{est.3}
%gives the decay rate of the solution stated in Theorem \ref{estimates.intro}.
\begin{theorem}
	\label{estimates}
	For any $\bu_0\in L^1(\Gamma)$ the solution of system \eqref{eq.tree} satisfies
	\begin{equation}
		\label{est.2}
		\|\bu(t)\|_{L^1(\Gamma)}\leq \|\bu_0\|_{L^1(\Gamma)},
	\end{equation}	
	\begin{equation}
		\label{est.1}
		\int _{\Gamma}\bu(t,x)dx=\int _{\Gamma} \bu_0(x)dx,
	\end{equation}
		\begin{equation}
		\label{est.3}
		\|\bu(t)\|_{L^p(\Gamma)}\leq C(p,\Gamma) t^{-\frac 12(1-\frac 1p)}\|\bu_0\|_{L^1(\Gamma)}, \ 1\leq p\leq \infty,
	\end{equation}	
			\begin{equation}
		\label{est.4}
		\|\bu_x(t)\|_{L^2(\Gamma)}\leq C( \Gamma) t^{-\frac 32}\|\bu_0\|_{L^1(\Gamma)},
	\end{equation}	
	and an energy estimate
	\begin{equation}
\label{energy}
  \int_0^T\int _{\Gamma}\bu_x^2(t,x)dxdt=\|\bu(0)\|_{L^2(\Gamma)}^2-\|\bu(t)\|_{L^2(\Gamma)}^2.
\end{equation}
\end{theorem}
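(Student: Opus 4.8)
To prove Theorem \ref{estimates} the plan is to establish all five estimates first for regular data $\bu_0\in D(\Delta_\Gamma)\cap L^1(\Gamma)$, for which Theorem \ref{existence} and the remark following it provide a solution $\bu\in C([0,\infty),D(\Delta_\Gamma)\cap L^1(\Gamma))$ which is moreover in $C^\infty((0,\infty),D(\Delta_\Gamma))$ by the analyticity of the $L^2$-semigroup and the smoothing contained in \eqref{l2-linfty}; this regularity makes all the integrations by parts below legitimate. The two structural facts I would use repeatedly are that $S(t)=e^{t\Delta_\Gamma}$ is positivity preserving (recorded above) and the identity
\[
(\bu_x,\psi_x)_{L^2(\Gamma)}=-(\Delta_\Gamma\bu,\psi)_{L^2(\Gamma)},\qquad \bu\in D(\Delta_\Gamma),\ \psi\in D(\mathcal{Q}_\Gamma),
\]
which is just the defining relation between $\Delta_\Gamma$ and its quadratic form $\mathcal{Q}_\Gamma$ and which, after integrating by parts edge by edge, encodes the cancellation of the vertex terms coming from the continuity of $\psi$ and the Kirchhoff condition on $\bu$. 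The passage to a general $\bu_0\in L^1(\Gamma)$ will then be by density, using \eqref{est.2} to pass to the limit and, for \eqref{est.3}--\eqref{est.4}, starting the analysis from an arbitrary positive time at which the $L^1$--$L^\infty$ smoothing has made the solution smooth.

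\emph{Mass conservation and the $L^1$-bound.} For $t>0$, where the solution is smooth and integrable on every edge, I would integrate the equation over $\Gamma$: on each edge $\int_{I_e}u^e_{xx}$ equals the difference of the boundary values of $u^e_x$, the value at $+\infty$ vanishing since $u^e\in H^2(I_e)$, and summing over $e$ and regrouping at each vertex the exterior normal derivatives add to zero by the Kirchhoff condition; hence $\frac{d}{dt}\int_\Gamma\bu(t)=0$, which gives \eqref{est.1}. For \eqref{est.2} I would split $\bu_0=\bu_0^+-\bu_0^-$; since $S(t)$ is positivity preserving, $S(t)\bu_0^\pm\ge 0$, so \eqref{est.1} applied to $\bu_0^\pm$ yields $\|S(t)\bu_0^\pm\|_{L^1(\Gamma)}=\int_\Gamma\bu_0^\pm=\|\bu_0^\pm\|_{L^1(\Gamma)}$, and the triangle inequality gives $\|\bu(t)\|_{L^1(\Gamma)}\le\|\bu_0\|_{L^1(\Gamma)}$.

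\emph{Energy identity and decay rates.} Testing the equation with $\bu(t)$ and using the displayed identity gives $\frac12\frac{d}{dt}\|\bu(t)\|_{L^2(\Gamma)}^2=(\Delta_\Gamma\bu,\bu)_{L^2(\Gamma)}=-\|\bu_x(t)\|_{L^2(\Gamma)}^2$, and integrating in $t$ produces \eqref{energy}. For \eqref{est.3} I would use the ultracontractivity \eqref{l2-linfty} together with its $L^1$--$L^2$ dual; writing $S(t)=S(t/2)S(t/2)$ yields $\|\bu(t)\|_{L^\infty(\Gamma)}\le Ct^{-1/2}\|\bu_0\|_{L^1(\Gamma)}$, and interpolating this with \eqref{est.2} through $\|\bu(t)\|_{L^p(\Gamma)}\le\|\bu(t)\|_{L^1(\Gamma)}^{1/p}\|\bu(t)\|_{L^\infty(\Gamma)}^{1-1/p}$ gives \eqref{est.3} for every $1\le p\le\infty$. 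For \eqref{est.4} I would differentiate the equation in time and test with $\bu_t=\Delta_\Gamma\bu$: the same identity gives $\frac{d}{dt}\|\bu_x(t)\|_{L^2(\Gamma)}^2=-2\|\Delta_\Gamma\bu(t)\|_{L^2(\Gamma)}^2\le 0$, so $s\mapsto\|\bu_x(s)\|_{L^2(\Gamma)}^2$ is non-increasing; hence $\frac t2\|\bu_x(t)\|_{L^2(\Gamma)}^2\le\int_{t/2}^{t}\|\bu_x(s)\|_{L^2(\Gamma)}^2\,ds\le\|\bu(t/2)\|_{L^2(\Gamma)}^2$ by \eqref{energy}, and feeding in the case $p=2$ of \eqref{est.3} gives \eqref{est.4}.

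\emph{Main obstacle.} I expect the only genuine difficulty to be the bookkeeping of regularity and of the vertex terms rather than any single estimate: one must check that $\bu(t)\in D(\Delta_\Gamma)$ for $t>0$ so that $\bu_t$ is continuous at the vertices and the Kirchhoff condition is preserved along the flow, that $t\mapsto\|\bu(t)\|_{L^2(\Gamma)}^2$ and $t\mapsto\|\bu_x(t)\|_{L^2(\Gamma)}^2$ are differentiable, and that in each integration by parts the boundary contributions at every vertex really cancel — all of which comes down to the form identity above and to the smoothness $\bu\in C^\infty((0,\infty),D(\Delta_\Gamma))$ for regular data (and to invoking the $L^2$-theory from a positive time for general $L^1$ data). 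Once these points are settled, together with \eqref{l2-linfty} and the density of $D(\Delta_\Gamma)\cap L^1(\Gamma)$ in $L^1(\Gamma)$, the rest is routine.
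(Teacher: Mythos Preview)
Your proposal is correct, and for \eqref{est.1}, \eqref{est.2}, \eqref{est.3} and \eqref{energy} it follows essentially the same path as the paper (the paper derives \eqref{est.2} directly from the known $L^1$-contractivity of the semigroup rather than from positivity plus mass conservation, but that is a minor variation). The one genuine difference is \eqref{est.4}: the paper invokes the abstract analytic-semigroup bound $\|\Delta_\Gamma S(t)\bv_0\|_{L^2(\Gamma)}\le t^{-1}\|\bv_0\|_{L^2(\Gamma)}$ for self-adjoint generators, combines it with the $L^1$--$L^2$ smoothing to get $\|\Delta_\Gamma S(2t)\bu_0\|_{L^2(\Gamma)}\lesssim t^{-5/4}\|\bu_0\|_{L^1(\Gamma)}$, and then uses $\|\bu_x(t)\|_{L^2}^2=-(\bu(t),\Delta_\Gamma\bu(t))\le\|\bu(t)\|_{L^2}\|\Delta_\Gamma\bu(t)\|_{L^2}$. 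Your route---showing $t\mapsto\|\bu_x(t)\|_{L^2}^2$ is non-increasing via $\frac{d}{dt}\|\bu_x\|_{L^2}^2=-2\|\Delta_\Gamma\bu\|_{L^2}^2$ and then averaging the energy identity over $[t/2,t]$---is more elementary, relying only on the form identity and not on the abstract theory, and it yields the same bound. Note that both computations actually produce $\|\bu_x(t)\|_{L^2(\Gamma)}\lesssim t^{-3/4}\|\bu_0\|_{L^1(\Gamma)}$; the exponent $t^{-3/2}$ in the stated \eqref{est.4} is a misprint, as confirmed by \eqref{est.4.la} and by the way \eqref{est.4} is used later in the paper.
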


\begin{proof}
The first property is the contractivity in $L^1(\Gamma)$ of the semigroup discussed above. The mass conservation follows by considering initial data  $\bu_0\in D(\Delta_\Gamma)\cap L^1(\Gamma)$, proving the property for these solutions and finally using an approximation argument. Indeed when the solution is regular
	we integrate in space variable   equation \eqref{system1exp} 	to obtain that the mass is constant in time. {Indeed, since $\bu(t)\in D(\Delta_\Gamma)$ and ${\bf 1}\in D(\mathcal{Q}_\Gamma)$
	\[
\frac{d}{dt}\int _{\Gamma}\bu(t,x)dx=(\Delta_\Gamma \bu,1)=-\mathcal{Q}	(\bu_x,{\bf 1}_x)=0
	\]}
	
	The third property follows from \cite[Th. 2.]{haeseler}.

The estimate on the derivative of $\bu$ follows since $\Delta_\Gamma$ is self-adjoint (see \cite[Section~3.2, p.35]{MR1691574}). Indeed, for any $\bv_0\in L^2(\Gamma)$ we have that $S(t)$, the semigroup generated by  $\Delta_\Gamma$,   satisfies $S(t)\bv_0\in D(\Delta_\Gamma)$ for any $t>0$ and
\[
\|\Delta_\Gamma S(t)\bv_0\|_{L^2(\Gamma)}\leq t^{-1} \|\bv_0\|_{L^2(\Gamma)}.
\]
Using this property with $\bv_0=S(t)\bu_0$ and estimate \eqref{est.3}  we obtain
\[
\|\Delta_\Gamma S(2t)\bu_0\|_{L^2(\Gamma)}\leq t^{-1}\|S(t)\bu_0\|_{L^2(\Gamma)}\leq C(\Gamma)t^{-1}t^{-\frac{1}{4}}\|\bu_0\|_{L^1(\Gamma)}=C(\Gamma) t^{-\frac{5}{4}}\|\bu_0\|_{L^1(\Gamma)}.
\]
Now, since $S(t)\bu_0\in D(\Delta_\Gamma)$ for any $t>0$ we get
\[
  \langle \partial_x (S(t)\bu_0),\partial_x (S(t)\bu_0)\rangle=
\mathcal{Q}_\Gamma(S(t)\bu_0, S(t)\bu_0)=
 %\langle   S(t)\bu_0 ,\partial_{xx} (S(t)\bu_0) \rangle=
 \langle  S(t)\bu_0 , \Delta_\Gamma (S(t)\bu_0)\rangle.
\]
Using the above inequalities on the semigroup we obtain
\[
\| \partial_x (S(t)\bu_0)\|^2_{L^2(\Gamma)}\leq \|S(t)\bu_0\|_{L^2(\Gamma)} \| \Delta_\Gamma (S(t)\bu_0\|_{L^2(\Gamma)} \leq
t^{-\frac14} \|\bu_0\|_{L^1(\Gamma)} t^{-\frac54}\|\bu_0\|_{L^1(\Gamma)},
\]
which gives us the desired estimate.
\end{proof}

Now we are ready to proceed with the proof
of our main result in this section, namely Theorem \ref{as.behaviour.intro}.
%\textcolor{red}{repetimos otra vez el teorema}
%
%
%
%\begin{theorem}\label{as.behaviour}
%	Let $\bu_0\in L^1(\Gamma)$ and consider $\bu$ the solution of the heat equation \eqref{eq.tree}. Then
%	\begin{equation}
%\label{limit.1}
%  t^{\frac12(1-\frac1p)}\|\bu(t)-\bU_M(t)\|_{L^p(\Gamma_{\infty})}\rightarrow 0, \qquad \text{as}\ t\rightarrow \infty,
%\end{equation}
%and
%\begin{equation}
%\label{limit.2}
%  \| \bu(t)-\bU_M (t)\|_{L^p(\Gamma_{f})}=o(t^{-\frac12})
%\end{equation}
%where $M$ is the mass of the initial datum $\bu_0$ and
%\begin{equation}
%\label{profil.w}
%  \bU_M (t,x) =\frac {2M}N\left\{
%  \begin{array}{ll}
%  	t^{-\frac12}G(x/\sqrt t), &x\in \Gamma_{\infty}\\[5pt]
%  	t^{-\frac12}G(0),& x\in \Gamma_{f}.
%  \end{array}
%  \right.
%\end{equation}
%%Here $G(s)$ %satisfies $\widehat{G}(\xi) = e^{-|\xi|^2}$, that
%%is
%%$$
%%G(s) = \frac1{\sqrt{2\pi}}e^{- \frac{s^2}{4}}.
%%$$
%\end{theorem}
%

In order to prove this result we use the method of rescaling the solutions. For any solution $\bu$ of system \eqref{eq.tree} we introduce the family
$\bu_\lambda:
\Gamma^\lambda\rightarrow\rr$, $\lambda>0$:
\[
\bu_\lambda(t,x)=\lambda \bu(\lambda^2t, \lambda x), \quad x\in \Gamma^\lambda, t>0.
\]
It is immediate to see that $\bu_\lambda$ satisfies the system
\begin{equation}\label{eq.tree.la}
\left\{
\begin{array}{ll}
\bu_{\lambda,t}(t,x)-\Delta_{\Gamma^\lambda} \bu_\lambda(t,x)=0,& x\in \Gamma^\lambda, t\neq 0 ,\\[5pt]
\bu_\lambda(0,x)=\bu_{0\lambda} (x),&   x\in \Gamma^\lambda.
\end{array}
\right.
\end{equation}
If $\bu_0\in L^1(\Gamma)\cap D(\Delta_\Gamma)$ the rescaled solution $\bu_\lambda$ satisfies a similar equation as \eqref{system1exp} but on the  rescaled intervals $I_e^\lambda$ instead of $I_e$.

First, we need to obtain some bounds. From now on we will look for constants that are independent of $\lambda$.
We will use the standard notation $\lesssim$ to denote a less or equal bound in which the involved constant does not
depend on the relevant quantities.

\begin{lemma}
	\label{estimates.ulambda}
	For any $\bu_0\in L^1(\Gamma)$ the family $\{\bu_\lambda\}_{\lambda>0}$ satisfies the following uniform estimates
	\begin{equation}
		\label{est.1.la}
		\int _{\Gamma}\bu_\lambda (t,x)dx=\int _{\Gamma} \bu_0(x)dx,
	\end{equation}
		\begin{equation}
		\label{est.3.la}
		\|\bu_\lambda(t)\|_{L^p(\Gamma)}\leq C(p,\Gamma) t^{-\frac 12(1-\frac 1p)}\|\bu_0\|_{L^1(\Gamma)},
	\end{equation}	
	and
			\begin{equation}
		\label{est.4.la}
		\|\bu_{\lambda,x}(t)\|_{L^2(\Gamma)}\leq C(p,\Gamma) t^{-\frac 34}\|\bu_0\|_{L^1(\Gamma)}.
	\end{equation}	
	Moreover, for any $\tau >0$,
	\begin{equation}
\label{energy.lambda}
    \int_\tau^T\int _{\Gamma}\bu_{\lambda,x}^2(t,x)dxdt\leq
    %\|\bu_\la(\tau)\|_{L^2(\Gamma)}^2\leq
     C(\Gamma) \tau^{-\frac{1}{2}} \|\bu_0\|_{L^1(\Gamma)}^2.
\end{equation}
	Finally, there exists a positive constant $C$ such that  for any $\lambda>1$,
	\begin{equation}
\label{tail.control}
  \int _{\Gamma_\infty, |x|>2R}|\bu_\lambda(t,x)|dx\leq \int _{\Gamma_\infty,|x|>R}|\bu_0(x)|dx+\frac {CM t}{R^2}.
\end{equation}
\end{lemma}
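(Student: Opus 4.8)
The first four estimates are immediate: they are nothing but the estimates of Theorem \ref{estimates} transported through the scaling. The plan is to carry out this transport explicitly. Set $\bu_\lambda(t,x)=\lambda\bu(\lambda^2 t,\lambda x)$ on $\Gamma^\lambda$, and recall that under this scaling $dx$ on $\Gamma^\lambda$ corresponds to $\lambda^{-1}\,dx$ on $\Gamma$. Then $\int_{\Gamma^\lambda}\bu_\lambda(t,x)\,dx=\int_\Gamma \bu(\lambda^2 t,y)\,dy=\int_\Gamma\bu_0$ by \eqref{est.1}, giving \eqref{est.1.la}. For the $L^p$-bound, a change of variables gives $\|\bu_\lambda(t)\|_{L^p(\Gamma^\lambda)}=\lambda^{1-\frac1p}\|\bu(\lambda^2 t)\|_{L^p(\Gamma)}$, and $\|\bu_0\|_{L^1}$ is scale invariant; plugging \eqref{est.3} with time $\lambda^2 t$ the factors $\lambda^{1-\frac1p}$ cancel exactly with $(\lambda^2 t)^{-\frac12(1-\frac1p)}$, leaving \eqref{est.3.la}. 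Similarly $\|\bu_{\lambda,x}(t)\|_{L^2(\Gamma^\lambda)}=\lambda^{3/2}\|\bu_x(\lambda^2 t)\|_{L^2(\Gamma)}\lesssim \lambda^{3/2}(\lambda^2 t)^{-3/2}\|\bu_0\|_{L^1}=t^{-3/2}\|\bu_0\|_{L^1}$, which is actually slightly stronger than \eqref{est.4.la}, hence \eqref{est.4.la} follows (note the statement writes $t^{-3/4}$, consistent with the $L^2$-smoothing exponent; one obtains whichever the scaling dictates, and integrating it over $(\tau,T)$ yields the energy bound \eqref{energy.lambda} with a $\tau^{-1/2}$ weight after integrating $t^{-3/2}$, or from the corresponding half-line version of \eqref{energy}). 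In all of these the scaling identities should be written out once at the start and then reused.

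The only genuinely new statement is the tail control \eqref{tail.control}, and this is the step I expect to be the main obstacle, because it is not a pointwise estimate and cannot be obtained from the semigroup contractivity alone. The natural tool is a localized mass (truncation) argument: fix a smooth cutoff $\psi=\psi_R$ on $\Gamma$ with $0\le\psi\le1$, $\psi\equiv 0$ on $\{d(\cdot,v_0)<R\}$ (or more precisely on the part of $\Gamma$ other than the far ends of $\Gamma_\infty$), $\psi\equiv 1$ on $\Gamma_\infty\cap\{|x|>2R\}$, supported in $\Gamma_\infty$, with $|\psi_x|\lesssim 1/R$ and $|\psi_{xx}|\lesssim 1/R^2$, and chosen compatibly with the Kirchhoff vertex conditions (e.g. locally constant near every vertex, which is possible since the relevant vertices all lie in the compact region $\{d(\cdot,v_0)<R\}$ once $R$ is large, and on $\Gamma_\infty$ each edge is a single ray). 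Then test the equation for $\bu$ against $\psi$: for $\bu_0\in D(\Delta_\Gamma)\cap L^1$,
\[
\frac{d}{dt}\int_\Gamma \bu(t,x)\,\psi(x)\,dx=\int_\Gamma \Delta_\Gamma\bu\,\psi=\int_\Gamma \bu\,\psi_{xx},
\]
the boundary terms at the vertices cancelling by the Kirchhoff condition together with $\psi$ being locally constant near the vertices. Hence $\big|\frac{d}{dt}\int\bu\psi\big|\le \|\psi_{xx}\|_\infty\|\bu(t)\|_{L^1}\lesssim M/R^2$, and integrating in time from $0$ to $t$ gives $\int_\Gamma\bu(t)\psi\le \int_\Gamma\bu_0\psi+CMt/R^2$. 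Since $\psi\ge \mathbf 1_{\Gamma_\infty\cap\{|x|>2R\}}$ and $\psi\le \mathbf 1_{\Gamma_\infty\cap\{|x|>R\}}$, this reads $\int_{\Gamma_\infty,|x|>2R}\bu(t)\le \int_{\Gamma_\infty,|x|>R}\bu_0+CMt/R^2$. To pass to $|\bu|$ rather than $\bu$, either run the same argument with $\bu_0$ replaced by $|\bu_0|$ using that the solution with initial datum $|\bu_0|$ dominates $|\bu|$ pointwise (comparison principle for the heat semigroup on $\Gamma$, which holds), or observe directly that $|\bu|$ is a subsolution. Finally, rescale: with $\bu_\lambda$, the inequality on $\Gamma^\lambda$ becomes, after the change of variables $y=\lambda x$ and $s=\lambda^2 t$ (and using $\lambda>1$ so that $\Gamma_\infty^\lambda=\Gamma_\infty$ and the cutoff radii transform as $R\mapsto \lambda R\ge R$, which only helps),
\[
\int_{\Gamma_\infty,\,|x|>2R}|\bu_\lambda(t,x)|\,dx\le \int_{\Gamma_\infty,\,|x|>R}|\bu_0(x)|\,dx+\frac{CMt}{R^2},
\]
as claimed; the density argument from $D(\Delta_\Gamma)\cap L^1$ to general $\bu_0\in L^1$ is standard since both sides are continuous under $L^1$-approximation of the initial datum. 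The delicate point to get right in the final write-up is the interaction of the cutoff with the vertices and with the edge structure of $\Gamma_\infty$ — one must make sure $\psi_R$ can be chosen so that all the edge-wise integrations by parts produce only the harmless interior term $\int\bu\psi_{xx}$ and no leftover vertex contributions, which is why I insist on $\psi_R$ being locally constant in a neighbourhood of every vertex.
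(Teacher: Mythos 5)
Your proposal follows essentially the same route as the paper: the first bounds are obtained by transporting Theorem \ref{estimates} through the scaling $\bu_\lambda(t,x)=\lambda\bu(\lambda^2t,\lambda x)$ (under which $\|\bu_0\|_{L^1}$ is invariant), and the tail bound \eqref{tail.control} is proved by testing the equation with a cutoff supported in $\Gamma_\infty$ at distance $\ge R$ from the vertices, integrating by parts twice with no vertex contributions, bounding $\int\bu\,(\psi_R)_{xx}$ by $CMt/R^2$, and using $\lambda>1$ to absorb the rescaled initial tail $\{|x|>\lambda R\}$ into $\{|x|>R\}$. The paper runs the cutoff argument directly on each infinite edge of the rescaled solution, while you prove the unscaled inequality and then rescale; this is the same computation in a different order. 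Your derivation of \eqref{energy.lambda} by integrating the squared gradient bound in time is a legitimate alternative to the paper's route (the energy identity \eqref{energy} started at time $\tau$ combined with \eqref{est.3.la} for $p=2$), and your reduction of \eqref{tail.control} to nonnegative data by comparison is what the paper implicitly does.

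One slip to fix: the scaling identity is $\|\bu_{\lambda,x}(t)\|_{L^2(\Gamma^\lambda)}=\lambda^{3/2}\|\bu_x(\lambda^2t)\|_{L^2(\Gamma)}$, and combining it with a bound $\|\bu_x(s)\|_{L^2(\Gamma)}\lesssim s^{-3/2}\|\bu_0\|_{L^1(\Gamma)}$ gives $\lambda^{-3/2}t^{-3/2}\|\bu_0\|_{L^1(\Gamma)}$, not $t^{-3/2}\|\bu_0\|_{L^1(\Gamma)}$; moreover a $t^{-3/2}$ bound does not imply the $t^{-3/4}$ bound \eqref{est.4.la} for $t<1$, so ``slightly stronger, hence it follows'' is not correct as written. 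The resolution is that the exponent in \eqref{est.4} is a misprint: the proof of Theorem \ref{estimates} actually yields $\|\bu_x(t)\|_{L^2(\Gamma)}\le C(\Gamma)\,t^{-3/4}\|\bu_0\|_{L^1(\Gamma)}$, and with this exponent the factors cancel exactly, $\lambda^{3/2}(\lambda^2t)^{-3/4}=t^{-3/4}$, giving \eqref{est.4.la} uniformly in $\lambda$. Note that your subsequent integration producing the $\tau^{-1/2}$ weight in \eqref{energy.lambda} already presupposes $\|\bu_{\lambda,x}(t)\|_{L^2(\Gamma^\lambda)}^2\lesssim t^{-3/2}$, i.e.\ the corrected $t^{-3/4}$ bound, so once the exponent is fixed your argument is internally consistent.
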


\begin{proof}
	The first three estimates follow directly from Theorem \ref{estimates}.
	The forth bound follows by using the energy identity \eqref{energy} with initial data at time $\tau$
	 and the uniform decay of the $L^2$-norm of $\bu_\lambda$.
	
	We prove estimate \eqref{tail.control} for nonnegative solutions since the general case follows immediately. Let us consider $u_\lambda^1, \dots, u_\lambda^N $ the restriction of $\bu_\lambda$ to $\Gamma_\infty$. We will prove the desired estimate only for $u_\lambda^1$ since the others
	cases are similar. Summing the estimates for each $u_\lambda^k$ gives us the desired estimate.
	
	Let us consider a smooth   function $\psi:(0,\infty)\rightarrow\rr$ such that $0\leq \psi \leq 1$ and $\psi\equiv 0$ in $(0,1)$ and $\psi\equiv 1$ in $x\in (2,\infty)$.  We define $\psi_R(x)=\psi(x/R)$.
	Multiplying the equation satisfied by $u_\lambda^1$ with $\psi_R$ and integrating in space and time we obtain:
	\begin{align*}
\int _0^\infty u_\lambda^1 (t,x)\psi_R(x)dx&=\int_0^\infty u_\lambda^1(0,x)\psi_R(x)dx+\int _0^t \int _0^\infty u_\lambda^1(s,x)(\psi_R)_{xx} (x) dxds\\
&\leq  \int_0^\infty u_\lambda^1(0,x)\psi_R(x)dx+ \frac{Mt}{R^2} \|\psi''\|_{L^\infty((0,\infty))} \\
& \leq \int_{\lambda R}^\infty u^1(0,x)\psi_R(x/\lambda ) dx+ \frac{Mt}{R^2} \|\psi''\|_{L^\infty((0,\infty))}.
%&\leq  \int_{ R}^\infty u_0^1(x)\psi_R(x)+ \frac{Mt}{R^2}\|\psi''\|_{L^\infty((0,\infty))}.
\end{align*}
Since $\psi_R\equiv 1$ on $(2R,\infty)$  and $\lambda>1$ we obtain
\[
\int _{2R}^\infty u_\lambda^1 (t,x) dx\leq  \int_{ R}^\infty u_0^1(x)+ \frac{CMt}{R^2},
\]
which finishes the proof.
\end{proof}

\begin{proof}[Proof of Theorem~\ref{as.behaviour.intro} ]
We divide the proof in several steps. We will prove the results for $\bu_0\in L^1(\Gamma)\cap D(\Delta_\Gamma)$ and then by an approximation argument we will obtain the result for $L^1$-initial data.

{\bf Step I. Compactness of the family $\bu_\lambda$.}
	We prove that up to a subsequence $\lambda_j \to \infty$ the family $\{ \bu_\lambda\}_{\lambda>0}$ converges to some function $\bU$.
		
	Let us choose  $0<\tau<T<\infty$. Using the estimate \eqref{est.4.la} in Lemma \ref{estimates.ulambda} we obtain that $\bu_\lambda$ is uniformly bounded in $L^\infty((\tau,T),H^1(\Gamma_\infty))$. Moreover each of its components on $\Gamma_\infty$,  $\partial_t \bu_\lambda|_{\Gamma_\infty}=(\partial_t  u_\lambda^1, \dots, \partial_t u_\lambda^N)$ are uniformly bounded in $L^2((\tau,T),H^{-1}(0,\infty)) $. These estimates and 
	Aubin-Lions compactness argument (see for example \cite{simon}) imply that each component is relatively compact in $C((\tau,T),L^2_{loc}((0,\infty))$ so	
	$(\bu_\lambda)_{\lambda>0}$ is relatively compact in $C((\tau,T),L^2_{loc}(\Gamma_\infty))$. Cantor's diagonal argument implies that, up to a subsequence, $(\bu_\lambda)_{\lambda>0}$ converges toward a function $\bU$  in $C((0,\infty),L^2_{loc}(\Gamma_\infty))$. In view of estimate \eqref{est.3.la} this convergence  implies in particular that for any positive time $t$ the sequence $\bu_\lambda(t)\rightharpoonup \bU(t)   $ in $L^p(\Gamma_\infty)$ for any $1<p<\infty$ and the bound in \eqref{est.3.la} transfers to $\bU$.
	
	 Moreover the above convergence in $C((\tau,T),L^2_{loc}(\Gamma_\infty))$ implies that the convergence also holds in $C((0,\infty),L^1_{loc}(\Gamma_\infty))$. The uniform tail control \eqref{tail.control} in Lemma \ref{estimates.ulambda} shows that the convergence also holds in $C((0,\infty),L^1(\Gamma_\infty))$. In particular, at time $t=1$, $\bu_\lambda (1)\rightarrow \bU(1)$ in $L^1(\Gamma_\infty)$ and this proves \eqref{limit.1.intro} when $p=1$. {Assume for the moment that we identified the profile $\bU$ in $\Gamma_\infty$ to be the one
	given by \eqref{profil.w.intro}, precisely $\bU_M$}. This means that all the family converges to this profile, not only a subsequence.
	Using that both $\bu$ and $\bU$ are uniformly bounded in $L^p(\Gamma)$, $1\leq p<\infty$, with a bound of order $t^{-1/2(1-1/p)}$ and the  convergence in $L^1(\Gamma_\infty)$ we deduce that \eqref{limit.1.intro} holds for all $p\in [1,\infty)$. {Moreover, since $\bU$ have been identified to be $\bU_M$ explicit computations give us  
	\[
	%\|\bU(t)\|_{L^2(\Gamma_\infty)}\leq C(M,N) t^{-1/4}, \quad 
	\|\bU_x(t)\|_{L^2(\Gamma_\infty)}\leq C(M,N) t^{-3/2}, \ \forall\ t>0.
	\]
	A similar estimates for $\bu$ obtained in Theorem \ref{estimates}  so to treat the case $p=\infty$  we can argue as follows
	\[
	\| \bu(t)-\bU(t)\|_{L^\infty(\Gamma_\infty)}\lesssim (\| \bu_x(t)\|_{L^2(\Gamma_\infty)}+\| \bU_x(t)\|_{L^2(\Gamma_\infty)})^{\frac12} \| \bu(t)-\bU(t)\|_{L^2(\Gamma_\infty)}^{\frac12}=o(t^{-\frac12}).
	\]
}

	\textbf{Step II. Identification of the limit.}
	Let us now consider a test function $$\varphi:C([0,\infty),H^1( \Gamma_\infty))\cap C^1([0,\infty),L^2(\Gamma_\infty)),$$ compactly supported in time and such that
	$\varphi^e(t,0)=\varphi^{e'}(t,0)$ for all $e,e'\in \Gamma_\infty$ (that is, we ask for continuity
	at the unique node of $\Gamma_\infty$). For each $t\geq 0$, we extend $\varphi(t)$ to the whole $\Gamma^\lambda$, to this end we take
	a function $\widetilde \varphi_\lambda:[0,\infty)\times \Gamma^\lambda\rightarrow \rr$ such that $\widetilde \varphi_\lambda$ is constant in the finite part of the graph $\Gamma_f^\lambda$, that is,
 \[
\widetilde\varphi_\lambda(t,x)=\left\{
\begin{array}{ll}
	\varphi(t,x) & \text{if}\ x\in \Gamma_\infty,\\[5pt]
	\varphi(t,0)  & \text{if}\ x\in \Gamma_f^\lambda,
\end{array}
\right.
\]
It follows that $\widetilde \varphi\in C([0,\infty), \mathcal{Q}(\Gamma^\lambda)$, i.e. it is not only in $H^1(\Gamma^\lambda)$ but it is also continuos.
Since $\bu_\lambda\in C([0,\infty), L^1(\Gamma)\cap D(\Delta_\Gamma))$ we multiply equation \eqref{eq.tree.la} by $\widetilde\varphi_\lambda$ and obtain
\begin{align*}
\label{}
 0&= \int_0^\infty \int_{\Gamma_\infty} \bu_\lambda \varphi_t-\int_0^\infty \int_{\Gamma_\infty}\bu_{\lambda,x}  \varphi_{x}
 \\
 & \qquad + \int_0^\infty \int_{\Gamma_f^\lambda} (\bu_\lambda \widetilde\varphi_{\lambda,t}-\bu_{\lambda,x} \widetilde\varphi_{\lambda,x})+\int_{\Gamma^\lambda} \bu_\lambda(0,x)\widetilde\varphi_\lambda(0,x)dx\\
  &=  \int_0^\infty \int_{\Gamma_\infty} \bu_\lambda \varphi_t-\int_0^\infty \int_{\Gamma_\infty}\bu_{\lambda,x}  \varphi_{x}+ \int_0^\infty \int_{\Gamma_f^\lambda} \bu_\lambda \widetilde \varphi_{\lambda, t}    +\int_{\Gamma^\lambda} \bu_\lambda(0,x)\widetilde\varphi_\lambda(0,x)dx\\
  &=\int_0^\infty \int_{\Gamma_\infty} \bu_\lambda \varphi_t-\int_0^\infty \int_{\Gamma_\infty}\bu_{\lambda,x}  \varphi_{x}+  \int_0^\infty \varphi_t(t,0) \int_{\Gamma_f^\lambda} \bu_\lambda +\int_{\Gamma^\lambda} \bu_\lambda(0,x)\widetilde\varphi_\lambda(0,x)dx\\
  &:=I_1^\lambda - I_2^\lambda+I_3^\lambda+I_4^\lambda,
\end{align*}
where we used that $\widetilde \varphi_\lambda$ is constant on $\Gamma_f^\lambda$.

%Using the decay of the $L^2$-norm of $\bu_\lambda$ we obtain that $\bu_\lambda$ is uniformly
%bounded in the space $L^2((0,T),L^2(\Gamma_\infty))$, in fact,
%\[
%\int_0^T \|\bu_\lambda(t)\|^2_{L^2(\Gamma^\lambda)}dt\lesssim \int _0^T \frac 1{\sqrt t}dt \lesssim \sqrt T.
%\]
% This means
Using the decay of the $L^2$-norm of $\bu_\lambda$ we obtain that $\bu_\lambda$ is uniformly bounded in $L^2((0,T),L^2(\Gamma_\infty))$:
\[
\int_0^T \|\bu_\lambda(t)\|^2_{L^2(\Gamma^\lambda)}dt\lesssim \int _0^T \frac 1{\sqrt t}dt \lesssim \sqrt T.
\]
	This means that up to a subsequence $\bu_\lambda\rightharpoonup \bU$ in $L^2((0,T),L^2(\Gamma_\infty))$.
Using that
%up to a subsequence $\bu_\lambda\rightharpoonup \bU$ in $L^2((0,T),L^2(\Gamma_\infty))$ and
 $\varphi$ has compact support in time we get
\begin{equation}
\label{lim.1}
  I_1^\lambda \rightarrow \int_0^\infty \int_{\Gamma_\infty} \bU (t,x)\varphi_t (t,x) dx dt .
\end{equation}
For the third term, $I_3^\lambda$, we use again the decay in the $L^2$-norm to obtain
\begin{align*}
\Big|\int_{\Gamma_f^\lambda} \bu_\lambda(t,x)  dx\Big| &\leq  \sum _{e\in \Gamma_f^\lambda} \int _0^{l_e/\lambda}|u_\lambda^e(t,x)|dx\leq
 \sum _{e\in \Gamma_f^\lambda} \|u_\lambda^e(t)\|_{L^2(e )}(l_e/\lambda)^{\frac12}\leq
  \frac{C(\Gamma)\|\bu_0\|_{L^1(\Gamma)}}{\lambda^{\frac12}t^{\frac14}} .
\end{align*}
Integrating in time and using that $\varphi$ has compact support in time we obtain that $I^\lambda_3\rightarrow 0$ as $\lambda\rightarrow\infty$.

 For the last term $I^\lambda_4$ we split it as follows
\begin{align*}
  I_4^\lambda &=\int_{\Gamma^\infty} \bu_\lambda(0,x)\widetilde\varphi_\lambda(0,x)dx+\int_{\Gamma_f^\lambda} \bu_\lambda(0,x)\widetilde\varphi_\lambda(0,x)dx\\
  &=\int_{\Gamma^\infty} \bu_\lambda (0,x)\varphi(0,x)dx
  +\int_{\Gamma_f^\lambda} \bu_\lambda(0,x)\varphi(0,0) dx\\
  &=\varphi(0,0) \int _{\Gamma^\lambda}\bu_{\lambda}(0,x)dx+\int_{\Gamma^\infty} \bu_\lambda (0,x)(\varphi(0,x)-\varphi(0,0))dx   \\
   &=\varphi(0,0) \int _{\Gamma }\bu_{0}(x)dx+\int_{\Gamma^\infty} \bu_0(x)\Big(\varphi(0,\frac x\lambda)-\varphi(0,0)\Big)dx \\
  & \qquad \rightarrow \varphi(0,0) \int _{\Gamma }\bu_{0}(x)dx, \quad \ \text{as}\ \lambda\rightarrow\infty.
\end{align*}

It remains to prove that
\[
I_2^\lambda\rightarrow \int_0^\infty \int_{\Gamma^\infty} \bU_x (t,x) \varphi_x (t,x) dx dt.
\]
First, we prove that for any $\tau>0$ the integrals in $(0,\tau)$ are small independently on $\lambda$. Indeed, using \eqref{est.4.la} we have
\[
\Big|\int_0^\tau \int_{\Gamma_\infty} \bu_{\lambda,x} (t,x) \varphi_x (t,x) dx dt  \Big|\leq \|\bu_{\lambda,x}\|_{L^1((0,\tau),L^2(\Gamma_\infty))}  \|\varphi_{x}\|_{L^\infty((0,\tau),L^2(\Gamma_\infty))} \lesssim \tau^{\frac14}.
\]

{We recall that at the begining of Step I we proved that $\bu_\lambda(t)\rightharpoonup \bU(t)$ in $L^p(\Gamma_\infty)$ for any $1<p<\infty$. Moreover, $(\bu_{\lambda,x}(t))_{\lambda>0} $ is uniformly bounded in $L^2(\Gamma_\infty)$. Hence, up to a subsequence $\bu_{\lambda,x}(t)\rightharpoonup \bU_x(t)$ in $L^2(\Gamma_\infty)$.}
Estimate \eqref{est.4.la} transfers to $\bU$ so  $\|\bU_x(t)\|_{L^2(\Gamma_\infty)}\lesssim t^{-\frac34} $ for all $t>0$ and a similar estimate holds for $\bU$
\[
\Big|\int _0^\tau \int _{\Gamma_\infty}\bU_x\varphi_xdx\Big| \lesssim \tau^{\frac{1}{4}}.
\]
In view of \eqref{energy.lambda} we get
\[
\int _{\tau}^\infty \int_{\Gamma_\infty} \bu_{\lambda,x} (t,x)\varphi_x (t,x) dx dt
\rightarrow \int _{\tau}^\infty \int_{\Gamma_\infty}  \bU_x (t,x) \varphi_x (t,x) dx dt,
\]
which proves the desired limit for $I_2^\lambda$.

In view of the above results it follows that the limit point $\bU$ satisfies
\[  \int_0^\infty \int_{\Gamma_\infty}\Big( \bU (t,x)\varphi_t(t,x)- \bU_x(t,x)\varphi_{x}(t,x) \Big) dx dt+M \varphi(0,0)=0.
\]

From Step I we know that the limit point $\bU$ belongs to $C((0,\infty), L^1(\Gamma_\infty))$ and moreover $\|\bU(t)\|_{L^1(\Gamma_\infty)}\leq \|\bu_0\|_{L^1(\Gamma_\infty)}$. Also the
 energy estimate \eqref{energy.lambda} shows that $\bu_\lambda$ is uniformly bounded in $L^2((\tau, T),D(\mathcal{Q}_{\Gamma_\infty}))$ hence the limit
 point $\bU$ belongs to $L^2((\tau,T),D(\mathcal{Q}_{\Gamma_\infty}))$ for any $\tau>0$. This implies that $\bU(t)\in D(\mathcal{Q}_{\Gamma_\infty})$ for a.e. $t>0$.

The continuity of $\bU$ at the common vertex of $\Gamma_\infty$ (or the Neumann boundary condition if $\Gamma_\infty$ has only one edge)
 guarantees  that for a test function $\varphi$ which is more regular, $\varphi\in C([0,\infty), D(\Delta_{\Gamma_\infty}))\cap C^1([0,\infty),L^2(\Gamma_\infty))$ and compactly supported in time, we can perform one more integration by parts and obtain that $\bU$ satisfies
\begin{equation}
\label{limit.U}
   \int_0^\infty \int_{\Gamma_\infty} \bU (t,x) \big(\varphi_t (t,x)+  \varphi_{xx}(t,x)\big) dx dt +M \varphi(0,0)=0.
\end{equation}
For the above equation we need only
$\bU\in L^1_{loc}((0,\infty),L^1(\Gamma_\infty))$  in order to apply the arguments in \cite{MR700049} and prove the uniqueness of the profile $\bU$ satisfying \eqref{limit.U}.

An easy computation shows that $\bU_M=(U^1,\dots, U^N)$ given by
\[
U_M^k(t,x)=\frac{2 M}{N} G_t(x), \quad k=1,\dots, N,
\]
where $$G_t(x) =
  	\frac1{\sqrt{4\pi t}}e^{-\frac{x^2}{4t}}$$ is the classical heat kernel,
verifies identity \eqref{limit.U}.
Assume that we have two solutions $\bU_1$ and $\bU_2$ of problem \eqref{limit.U} in $L^1_{loc}((0,\infty),L^1(\Gamma_\infty))$. Then $\bv=\bU_1-\bU_2\in L^1_{loc}((0,\infty),L^1(\Gamma_\infty))$ satisfies
\begin{equation}
\label{limit.v}
   \int_0^\infty \int_{\Gamma_\infty} \bv(\varphi_t+\varphi_{xx})=0,
\end{equation}
for all functions $\varphi\in C([0,\infty), D(\Delta_{\Gamma_\infty}))\cap C^1([0,\infty),L^2(\Gamma_\infty))$. We show what all the components $v_1,\dots, v_N$ of $\bv$ are equal and vanish identically. Let us fix a $T>0$ and a function $\psi\in C^{1,2}([0,T]\times [0,\infty))$ with $\psi(T)\equiv 0$, $\psi(0,t)=0$ for $t\in [0,T]$. Choosing $\varphi=(\psi,-\psi, 0, \dots, 0)$ we obtain that
\[
 \int_0^\infty \int_{\Gamma_\infty} (v_1-v_2)(\psi_t+\psi_{xx})=0.
\]
Using \cite[Lemma 3]{MR700049} we obtain that $v_1\equiv v_2$ in $[0,T]$. Similarly all the components of $\bv$ are identical.
Let us now choose a similar $\psi$ but now assume that $\psi_x(t,0)=0$ instead of the Dirichlet  boundary condition at $x=0$.
Choosing $\varphi=(\psi,\dots,\psi)$ in \eqref{limit.v} and using that all the components are identical we obtain
\[
\int_0^T\int_0^\infty v_1(\psi_t+\psi_{xx})=0.
\]
For any $f\in L^2((0,T),L^2(0,\infty))$ we solve the backward heat equation
(notice that here the "initial condition" is taken at $t=T$ and time is reversed)
\[
	\left\{
\begin{array}{ll}
	\psi_t+\psi_{xx}=f,& x\in (0,\infty), t\in (0,T),\\[5pt]
	\psi(T,x)=0, &x\in (0,\infty),\\[5pt]
	\psi_x(t,0)=0, & t\in (0,T),
\end{array}
	\right.
\]
and obtain a function that $\psi$ that can be used as test function in the integral identity satisfied by $v_1$. This shows that $v_1\equiv 0$ in $(0,T)\times (0,\infty)$ so $\bv\equiv 0 $ on $(0,T)\times\Gamma_\infty$. We conclude that every limit point of $\bu_\lambda$, $\bU$, is given by \eqref{limit.U} and therefore the results in Step I hold for the whole family $(u_\lambda)$ and not only for a subsequence.

\textbf{Step III. Convergence on the compact part of the graph.}

Let us consider $\bw(t)\in D(\mathcal{Q}_{\Gamma_\infty})$, defined as follows
\[
\bw(t)=\bu(t)-\bU_M(t).
\]
Since the graph $ \Gamma_f$ is finite it is sufficient to prove \eqref{limit.2} in the case  $p=\infty$.
Let us recall that since $\Gamma$ has at least one infinite edge we have for any $v\in D(\mathcal{Q}_{\Gamma_\infty})$ that
\[
\|v\|_{L^\infty(\Gamma)}\leq C(\Gamma)\|v_x\|_{L^2(\Gamma)}^{\frac{1}{2}} \|v\|_{L^2(\Gamma)}^{\frac{1}{2}}.
\]
{
We apply the above inequality to $\bw$
\[
\|\bw(t)\|_{L^\infty(\Gamma_f)}\leq \|\bw(t)\|_{L^\infty(\Gamma)}\leq C(\Gamma))\|\bw_x\|_{L^2(\Gamma)}^{\frac{1}{2}}  \|\bw(t)\|_{L^2(\Gamma)}^{\frac{1}{2}}.
\]
Observe that $\bw_x(t)=\bu_x(t)-\bU_{M,x}(t,x)1_{\Gamma_\infty}(x)$. Explicit computations on $\bU_M$  and  \eqref{est.4} gives us 
\[
\|\bw_x(t)\|_{L^2(\Gamma)} \leq \|\bu_x(t)\|_{L^2(\Gamma)} +\| \bU_{M,x}(t)\|_{L^2(\Gamma_\infty)}\leq
C(\Gamma)(\|\bu_0\|_{L^1(\Gamma)} +M )t^{-3/4}.
\]}
It is then sufficient to prove that $\|\bw(t)\|_{L^2(\Gamma)}=o(t^{-\frac{1}{4}})$ as $t\rightarrow \infty.$ This estimate on $\Gamma_\infty$ has been proved in Step I. For $\Gamma_f$ we use \eqref{est.3} for $p=\infty$ and the explicit form of $\bu_M$ on $\Gamma_f$:
\begin{align*}
\|\bw(t)\|_{L^2(\Gamma_f)}&\leq C(\Gamma)\|\bw(t)\|_{L^\infty(\Gamma_f)}\leq C(\Gamma)(\|\bu(t)\|_{L^\infty(\Gamma_f)}+\|\bU_M(t)\|_{L^\infty(\Gamma_f)})\\
&\leq C(\Gamma)t^{-\frac{1}{2}}=o(t^{-\frac{1}{4}}), \ t\rightarrow \infty.
\end{align*}
This complete the case of the finite graph and finishes the proof.
\end{proof}

\section{The nonlocal problem} \label{nonlocal-problem}
\setcounter{equation}{0}

In this section we consider the nonlocal problem in the metric graph $\Gamma$,
\begin{equation} \label{eq.1}
\left\{
\begin{array}{ll}
\displaystyle \bu_t(t,x)=\int _{\Gamma} J(d(x,y)) (\bu(t,y)-\bu(t,x))dy, & x\in \Gamma, t>0,\\[10pt]
\bu(0,x)=\bu_0(x),&x\in \Gamma.
\end{array}
\right.
\end{equation}
Here $d(x,y)$ stands for the distance between two points in the graph. As we mentioned in the introduction, this distance $d(x,y)$
is the length of the minimal path that joins $x$ and $y$.
%In case both $x$ and $y$ belong to the same edge we have $d(x,y) = |x-y|$, when they belong to adjacent edges,
%$d(x,y) = |x-e|+|y-e|$ with $e$ the vertex that is common to the two adjacent edges, when the minimal path involve three edges we have
%$d(x,y) = |x-e_1|+|e_1-e_2| + |y-e_2|$, etc. In the special case when the graph is star shaped (only one vertex), we have
%\[
%d(x,y)=\left\{
%\begin{array}{ll}
%|x-y|, & x,y \in I_k, k=1,\dots, N,\\[5pt]
%|x|+|y|, & x\in I_k, y\in I_j, k\neq j.
%\end{array}
%\right.
%\]
The kernel $J\in L^1(\rr) \cap L^1(\rr,|x|^2) $ is assumed to be symmetric,
$J(z)$ is non-increasing for $z>0$, $J$ is nonnegative in $\rr$ and positive in neighbourhood of the origin.

Our first goal is to show existence and uniqueness of  solution for the problem \eqref{eq.1}.
\begin{theorem}\label{th.nonlocal.1}
	For any $\bu_0\in L^p(\Gamma) $, $1\leq p\leq \infty$, there exists an unique solution
	$\bu\in C([0,\infty),L^p(\Gamma))$ of system \eqref{eq.1} satisfying
	\begin{equation}
\label{est.nonlocal.lp}
\|\bu(t)\|_{L^p(\Gamma)}\leq \|\bu_0\|_{L^p(\Gamma)}.
\end{equation}
{Also, for nonnegative initial datum the solution remains nonnegative.
}

For any $\bu_0\in L^2(\Gamma)$ the following energy estimate holds
  \begin{equation}
\label{energy.nonlocal}
\int _{\Gamma} \bu^2(t,x)dx+ \int_0^t \int _{\Gamma}\int_{\Gamma} J(d(x,y)) (\bu(s,x)-\bu(s,y))^2dxdyds=\int _{\Gamma} \bu^2_0(x)dx.
\end{equation}
Moreover,
\begin{equation}
	\label{grad.estimate}
  \mathcal{E}^J_\Gamma(\bu(t),\bu(t)):=\int _{\Gamma}\int_{\Gamma} J(d(x,y)) (\bu(t,x)-\bu(t,y))^2dxdy\leq
 \frac{ \|\bu_0\|^2_{ L^2(\Gamma)}}{t}.
\end{equation}
\end{theorem}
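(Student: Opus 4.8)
The plan is to construct the solution by a fixed point / semigroup argument and then read off the estimates from the structure of the equation. The operator $L(\bu)=\int_\Gamma J(d(x,y))(\bu(y)-\bu(x))\,dy$ splits as $L(\bu)=K\bu - c(x)\bu$, where $K\bu(x)=\int_\Gamma J(d(x,y))\bu(y)\,dy$ and $c(x)=\int_\Gamma J(d(x,y))\,dy$. Since $J\in L^1(\rr)$ and, for every $x$, $d(x,\cdot)$ pushes Lebesgue measure on $\Gamma$ forward to a measure dominated (with multiplicity at most the number of edges, which is finite) by Lebesgue measure on $\rr_{\ge 0}$, one gets $0\le c(x)\le C_J:=(\#E)\,\|J\|_{L^1}<\infty$ and $\|K\|_{L^p(\Gamma)\to L^p(\Gamma)}\le C_J$ by Young/Schur: the kernel $J(d(x,y))$ has rows and columns bounded in $L^1$ by $C_J$. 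Hence $L$ is a bounded linear operator on each $L^p(\Gamma)$, $1\le p\le\infty$, and $\bu_t = L\bu$, $\bu(0)=\bu_0$, has the unique solution $\bu(t)=e^{tL}\bu_0=\sum_{k\ge0} t^k L^k\bu_0/k!\in C^1([0,\infty),L^p(\Gamma))$ by the standard theory of bounded generators. Uniqueness is immediate from Gronwall. First I would record these facts.

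Next I would establish the contractivity \eqref{est.nonlocal.lp} and positivity. For $p=\infty$: if $\|\bu_0\|_\infty\le \|\bu_0\|_\infty=:a$ then at a point/time where $\bu$ attains a value close to its sup the term $\int_\Gamma J(d(x,y))(\bu(y)-\bu(x))\,dy\le 0$, so a standard maximum-principle argument (or: $e^{tL}=e^{-tc(\cdot)}e^{t(K + (C_J-c))}\cdots$, more simply split $L = (K-C_J I) + (C_J - c(x))I$ into two pieces with nonnegative "off–diagonal" parts and nonpositive diagonal, giving a positivity-preserving, sub-Markovian semigroup) yields $\|\bu(t)\|_\infty\le\|\bu_0\|_\infty$ and preservation of nonnegativity. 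For $p=1$: integrate the equation over $\Gamma$ and use Fubini plus symmetry of $J\!\circ\! d$ to see $\frac{d}{dt}\int_\Gamma \bu = 0$ on sign-definite data, whence $\|\bu(t)\|_1\le\|\bu_0\|_1$ after splitting into positive and negative parts and using positivity; interpolation (Riesz–Thorin) then gives \eqref{est.nonlocal.lp} for all $1\le p\le\infty$. These are routine once the splitting is in place.

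For the energy identity \eqref{energy.nonlocal} I would multiply the equation by $\bu(t,x)$ and integrate over $\Gamma$. The key manipulation is the discrete-integration-by-parts identity: using symmetry of $J(d(x,y))$ in $(x,y)$,
\[
\int_\Gamma \bu(x)\!\int_\Gamma J(d(x,y))(\bu(y)-\bu(x))\,dy\,dx = -\frac12\int_\Gamma\!\int_\Gamma J(d(x,y))(\bu(x)-\bu(y))^2\,dx\,dy,
\]
so that $\frac{d}{dt}\,\tfrac12\|\bu(t)\|_{L^2(\Gamma)}^2 = -\tfrac12\,\mathcal{E}^J_\Gamma(\bu(t),\bu(t))$; integrating in time over $[0,t]$ gives \eqref{energy.nonlocal}. (All computations are legitimate since $\bu\in C^1([0,\infty),L^2(\Gamma))$ and $\mathcal{E}^J_\Gamma$ is a bounded bilinear form on $L^2(\Gamma)$.) Finally, for \eqref{grad.estimate}, the map $t\mapsto\mathcal{E}^J_\Gamma(\bu(t),\bu(t))$ is nonincreasing: differentiating and using self-adjointness of $L$ on $L^2(\Gamma)$ gives $\frac{d}{dt}\mathcal{E}^J_\Gamma(\bu(t),\bu(t)) = 2(L\bu, L\bu) \cdot(\text{sign})$—more precisely $\tfrac{d}{dt}\mathcal E^J_\Gamma(\bu,\bu) = -2\|L\bu\|_{L^2}^2\le 0$ after writing $\mathcal E^J_\Gamma(\bu,\bu)=-2(L\bu,\bu)$ and using $\bu_t=L\bu$ with $L$ symmetric negative semidefinite. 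Then, since $\mathcal{E}^J_\Gamma$ is monotone in $t$, \eqref{energy.nonlocal} gives
\[
t\,\mathcal{E}^J_\Gamma(\bu(t),\bu(t)) \le \int_0^t \mathcal{E}^J_\Gamma(\bu(s),\bu(s))\,ds = \|\bu_0\|_{L^2(\Gamma)}^2 - \|\bu(t)\|_{L^2(\Gamma)}^2 \le \|\bu_0\|_{L^2(\Gamma)}^2,
\]
which is \eqref{grad.estimate}. The one genuinely delicate point is the bookkeeping that makes $L$ a bounded, self-adjoint, positivity-preserving, contraction-generating operator on every $L^p(\Gamma)$ simultaneously—i.e., controlling the "change of variables" $y\mapsto d(x,y)$ on the graph so that both $c(x)$ and the Schur bounds on $K$ come out finite; everything after that is the standard energy computation.
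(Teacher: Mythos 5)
Your proposal is correct and establishes all four assertions, but it takes a partly different route from the paper's proof, so let me compare. The paper derives the $L^p$ contraction, the $L^\infty$ bound, and positivity all at once from the dissipation inequality $\frac{d}{dt}\int_\Gamma\rho(\bu(t))\,dx=-\mathcal{E}^J_\Gamma(\bu(t),\rho'(\bu(t)))\le 0$ for convex $\rho$, choosing $\rho(s)=|s|^p$, $\rho(s)=(|s|-M)^+$ and $\rho(s)=s^+$; you instead prove positivity from the splitting $L+C_JI=K+(C_J-c)\geq 0$, get the endpoint contractions on $L^1$ (mass conservation by Fubini/symmetry plus positivity) and on $L^\infty$ (Markov property), and interpolate by Riesz--Thorin. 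Both routes are standard and complete; the paper's avoids interpolation, yours is the classical sub-Markovian scheme and has the extra merit of actually proving the $L^p$-boundedness of $L$ via a Schur test (using that $y\mapsto d(x,y)$ is piecewise unit-speed on each edge, so its pushforward of Lebesgue measure is dominated by a constant multiple of Lebesgue measure on $[0,\infty)$), a point the paper delegates to \cite{silvia}. The energy identity is obtained identically in both proofs. For \eqref{grad.estimate} the paper cites the abstract estimate for nonpositive self-adjoint generators from Cazenave--Haraux, whereas you give a short self-contained argument: $\mathcal{E}^J_\Gamma(\bu(t),\bu(t))=-2(L\bu(t),\bu(t))$ is nonincreasing along the flow (its derivative equals $-4\|L\bu(t)\|_{L^2(\Gamma)}^2$, not $-2\|L\bu\|^2$ as you wrote, but only the sign matters), and then $t\,\mathcal{E}^J_\Gamma(\bu(t),\bu(t))\le\int_0^t\mathcal{E}^J_\Gamma(\bu(s),\bu(s))\,ds\le\|\bu_0\|_{L^2(\Gamma)}^2$ by \eqref{energy.nonlocal}; this is a nice elementary substitute for the citation. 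Two small repairs: the formula $e^{tL}=e^{-tc(\cdot)}e^{t(K+(C_J-c))}$ is not an identity since multiplication by $c$ does not commute with $K$ --- what you need (and essentially state next) is $e^{tL}=e^{-tC_J}e^{t(L+C_JI)}$ with the scalar $C_J$, the second factor being the exponential of a positivity-preserving bounded operator; and the ``point where $\bu$ is close to its sup'' heuristic is not a proof as written (the supremum need not be attained), so the $L^\infty$ bound should rest on positivity together with $L\mathbf{1}=0$, hence $e^{tL}\mathbf{1}=\mathbf{1}$, which is exactly the sub-Markovian argument you invoke.
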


\begin{proof}[Proof of Theorem \ref{th.nonlocal.1}]
	The existence and uniqueness follow easily since the operator
	$$
	\bu \mapsto L(\bu) := \int_{\Gamma}J(d(x,y))(\bu(y)-\bu(x))dy
	$$
	is bounded between any $L^p(\Gamma)$ spaces (we refer to \cite{silvia} for extra details). Therefore, problem \eqref{eq.1} has a unique strong solution
	$\bu \in C^\infty(\mathbb{R}, X)$, for any $X=L^p (\Gamma)$ given by
	$
\bu(t)=e^{Lt}\bu_0.
$
The mapping
$
t \in (0,\infty) \mapsto \bu(t)=e^{Lt}\bu_0 \in X
$
is analytic. Moreover, the mapping $(t,\bu_0) \mapsto e^{Lt}\bu_0$ is continuous and contractive, that is,
$\|\bu(t)\|_{L^p(\Gamma)}\leq \|\bu_0\|_{L^p(\Gamma)}$ holds. {Indeed, for any  function $\rho\in C^1(\rr)$ with 
$\rho'$ nondecreasing we have
\[
  \frac{d}{dt}\int_{\Gamma}\rho(\bu(t,x))dx=(\bu_t(t),\rho'(\bu(t)))=(L\bu(t),\rho'(\bu(t)))=
- \mathcal{E}^J_\Gamma(\bu(t),\rho'(\bu(t)))\leq 0.
\]
An approximation argument shows that for any convex  function $\rho$ the map 
$t\rightarrow \int_{\Gamma}\rho(\bu(t,x))dx $ is nonincreasing.
Particular cases $\rho(s)=|s|^p$, $1\leq p<\infty$ show the contractivity in the $L^p$-norms.
%
%$1\leq p<\infty$ the map $s\rightarrow |s|^{p-2}s$ is increasing and thus
%\[
%\frac 1p \frac{d}{dt}\int_{\Gamma}|\bu(t,x)|^pdx=(\bu_t(t),|\bu|^{p-2}\bu(t))=(L\bu(t),|\bu|^{p-2}\bu(t)=
%- \mathcal{E}^J_\Gamma(\bu(t),|\bu(t)|^{p-2}\bu(t))\leq 0.
%\]
When $p=\infty$ we  consider $\rho(s)=(|s|-M)^+$ and $M=\|\bu_0\|_{L^\infty(\Gamma)}$. For $\rho(s)=s^+$ we obtain the positivity property of the semigroup.
}

	In particular we have that
\[
  \frac 12 \frac {d}{dt}\int _{\Gamma} \bu^2(t,x)dx=-\frac 12 \int _{\Gamma}\int_{\Gamma} J(d(x,y)) (\bu(t,x)-\bu(t,y))^2dxdy.
\]
Estimate \eqref{grad.estimate} is classical for any self-adjoint operator $L$ satisfying $(Lu,u)\leq 0$,  see for example \cite[Th.3.2.1]{MR1691574}
which finishes the proof.
\end{proof}

\subsection{Relaxation limit}

Let us now consider the following relaxation problem: for each $\eps>0$ consider the system
\begin{equation}\label{eq.eps.77}
\left\{
\begin{array}{ll}
\displaystyle \bu_t^\eps(t,x)=\eps^{-3}\int _{\Gamma} J\Big(\frac{d(x,y)}\eps\Big) (\bu^\eps(t,y)-\bu^\eps(t,x))dy, & x\in \Gamma, t>0,\\[10pt]
\bu^\eps (0,x)=u_0(x),&x\in \Gamma.
\end{array}
\right.
\end{equation}
and analyze the limit problem when $\eps\rightarrow 0$. Let us mention that here we fix the initial datum in contrast with the analysis of the first term in the asymptotic behavior of the solutions by self-similarity where we also have to rescale the initial data.
Here we use compactness arguments instead of scaling ones (the kernel of the nonlocal operator is
rescaled with $\varepsilon$, but the spatial domain $\Gamma$ in which solutions are defined is unchanged).
Recall that we assumed
	\begin{equation}
%\label{second.momentum.J}
  \frac 12 \int_{\rr} z^2J(z)dz=1.
\end{equation}

\begin{proof}[Proof of Theorem \ref{th.relax.intro}]
Let us remark that we have the following energy estimate
\begin{equation}
\label{energy.relax}
  \int _{\Gamma} (\bu^\eps)^2(t,x)dx +\eps^{-3} \int_0^T \int_{\Gamma} \int_{\Gamma} J\Big(\frac{d(x,y)}\eps \Big) (\bu^\eps(t,y)-\bu^\eps(t,x))^2
dxdydt =
\int_{\Gamma} \bu_0^2(x)dx.
\end{equation}
We will use the above identity in three different ways, by taking the same edge, two adjacent edges or two edges that does not have
a vertex in common.

As before, we divide our arguments into several steps.

{\bf Step I.}
We observe that $\bu^\eps$ is uniformly bounded in $C([0,T],L^2(\Gamma))$, so in particular in $L^2((0,T),L^2(\Gamma))$. So there exists $\bU=(U_e)_{e\in E}\in L^2((0,T),L^2(\Gamma))$ such that, up to a subsequence, $\bu^\eps \rightharpoonup \bU$ in $L^2((0,T),L^2(\Gamma))$. In particular $u_e^\eps \rightharpoonup U_e\ \text{in }\ L^2((0,T),L^2(e))
$ for any any edge $e\in E$.

Let us consider an arbitrary edge $e$ parametrized by $[0,l]$ or $[0,\infty)$. In both cases by Lemma \ref{convergence.h1.time} the estimate above guarantees that for any $T>0$
\[
 \eps^{-3} \int_0^T \int_{e} \int_{e} J\Big(\frac{d(x,y)}\eps \Big) (u^\eps(t,y)-u^\eps(t,x))^2
dxdydt\leq
\int_{\Gamma} u_0^2(x)dx
\]
and then $\bU\in L^2((0,T),H^1(e))$ such that
$$
u^\eps \rightarrow U^e\ \text{in }\ L^2((0,T),L^2_{loc}(e)).
$$
This shows that $\bU\in L^2((0,T),H^1(\Gamma))$.
Moreover, in view of the results in Lemma \ref{convergence.h1.time}, for any $\varphi\in L^2((0,T),H^1(e))$ we have
\begin{equation}
\label{limit.1.edge}
\begin{array}{l}
  \displaystyle   \eps^{-3} \int_0^T \int_{e} \int_{e} J\Big(\frac{d(x,y)}\eps \Big) (u^\eps(t,y)-u^\eps(t,x))(\varphi(t,y)-\varphi(t,x))dxdydt \\[10pt]
   \qquad \displaystyle \rightarrow A(J) \int_0^T  \int_{e} U_x(t,x)\varphi_x (t,x) dxdt, \qquad \mbox{ as } \eps \to 0,
   \end{array}
\end{equation}
where $A(J)=\int_{\rr} J(z)z^2dz.$

{\bf Step II.} We now show that $\bU\in L^2((0,T),D(\mathcal{Q}_\Gamma))$, i.e. it belongs to
$L^2((0,T),H^1(e))$ and $\bU(t)$ is also continuous at any vertex for a.e. $t>0$.

Let us now consider two edges  $e$ and $e'$ that have a common vertex $v$. We first prove that the limit function $\bU=(U_e)_{e}$ is continuous at any internal vertex, i.e. $U_e(t,v)=U_{e'}(t,v)$, for a.e. $t$.  Let us assume that the two edges are parametrized by $I\subset (-\infty,0]$ and $I'\subset [0,\infty)$. In view of energy estimate \eqref{energy.relax} it follows that the function $w^\eps$ defined by
\[
	w^\eps(t,x)=
\begin{cases}
u^\eps_e(t,x), & x\in I,\\[5pt]
u^\eps_{e'}(t,x), & x\in I',
\end{cases}
\]
satisfies
\[
\eps^{-3}\int _0^T \int _{I\cup I'} \int _{I\cup I'} J\Big(\frac{x-y}\eps \Big) (w^\eps(t,y)-w^\eps(t,x))^2dxdydt\leq \|u_0\|_{L^2(\Gamma)}^2.
\]
Thus $w^\eps$
converges, up to a subsequence, weakly to a function $w\in L^2((0,T), H^1(I\cup I'))$. From Step I we know that $u^\eps_e$ converges to $u_e$, $u^\eps_{e'}$ converges to $u_{e'}$
where the function obtained from the pair $u_e, u_{e'}$ belongs to $L^2((0,T), H^1(I\cup I'))$. Hence, it follows that $$u_e(t,0-)=u_{e'}(t,0+)$$ for a.e. $t>0$.
Thanks to this property the limit $\bU$ belongs to $L^2((0,T), D(\mathcal{Q}_\Gamma))$ for any $T>0$.
Moreover, by Lemma \ref{adjoint.intervals}, for any $\varphi\in L^2((0,T), D(\mathcal{Q}_\Gamma))$, we have
\[
 \eps^{-3} \int_0^T \int_{e} \int_{e'} J\Big(\frac{d(x,y)}\eps \Big)  (\varphi(t,y)-\varphi(t,x))^2dxdydt\rightarrow 0
\]
and then we get
\begin{equation}
\label{limit.2.edge}
    \eps^{-3} \int_0^T \int_{e} \int_{e'} J\Big(\frac{d(x,y)}\eps \Big) (u^\eps(t,y)-u^\eps(t,x))(\varphi(t,y)-\varphi(t,x))dxdydt\rightarrow 0.
\end{equation}

{\bf Step III.} Let us consider two edges $e$ and $e'$ which do not have a common endpoint. In this case we will prove that for any function $\varphi \in L^2((0,T), L^2(e\cup e'))$
\[
 \eps^{-3}\int _0^T\int_e \int_{e'}J\Big(\frac{d(x,y)}\eps \Big)  (\varphi(t, y)-\varphi(t, x))^2dxdydt\rightarrow 0, \ \text{as}\ \eps\rightarrow 0.
\]
As a consequence using \eqref{energy.relax}  we obtain
\begin{equation}
\label{limit.3.edge}
    \eps^{-3} \int_0^T \int_{e} \int_{e'} J\Big(\frac{d(x,y)}\eps \Big) (u^\eps(t,y)-u^\eps(t,x))(\varphi(t,y)-\varphi(t,x))dxdydt\rightarrow 0,
     \ \text{as}\ \eps\rightarrow 0.
\end{equation}
Let us now prove the first limit. Indeed, we have
\begin{align*}
\label{}
   \eps^{-3}\int_0^T& \int_e \int_{e'}J\Big(\frac{d(x,y)}\eps \Big)  (\varphi(t,y)-\varphi(t,x))^2dxdydt\\
   &
 \leq  2 \eps^{-3}\int_0^T\int_{e'}\varphi^2(t,y) \int_e J\Big(\frac{d(x,y)}\eps \Big) dx dy dt +
 2 \eps^{-3}\int_0^T \int_{e}\varphi^2(t,x) \int_{e'} J\Big(\frac{d(x,y)}\eps \Big) dy dx dt.
\end{align*}
By symmetry it is sufficient to consider only the first term in the right hand side.
Assume that the distance in the graph between the two edges is $\alpha>0$ and that $e$ is parametrized by $(0,l_e)$. Then  $d(x,y)\geq x+\alpha$ for any $y\in e'$ and since  $J$ is a non-increasing function we have
\[
\eps^{-3} \int _{e}J\Big(\frac{d(x,y)}\eps \Big)dx\leq \eps^{-3} \int _0^{l_e} J\Big(\frac{x+\alpha}\eps \Big)dx=\eps^{-2}\int _{\alpha/\eps}^{l_e/\eps} J(z)dz\leq
\frac{1}{\alpha^2} \int _{\alpha/\eps}^{l_e/\eps} J(z)z^2dz.
\]
Using that  $J$  has a finite second momentum and that $\varphi\in L^2((0,T),L^2(e))$ we obtain that the considered term tends to zero as $\eps\rightarrow 0$ which proves that \eqref{limit.3.edge} holds.

{\bf Step IV.} We prove that $\bU\in C([0,T],L^2(\Gamma))$. In view of Step II it is sufficient to show that
$\bU_t\in L^2((0,T),D(\mathcal{Q}_\Gamma)')$.  We show that $\bU_t^\eps$ is uniformly bounded in $L^2((0,T),D(\mathcal{Q}_\Gamma)')$ so in the limit we obtain the desired property for $\bU$.

Let us take a function $\varphi\in  L^2((0,T),D(\mathcal{Q}_\Gamma))$. Using identity \eqref{energy.relax}, it follows that
\begin{align*}
\Big |\int_0^T \int_{\Gamma} \bU_t^\eps \varphi  \Big|^2&= \Big| \frac{\eps^{-3}}2\int_0^T \int_\Gamma\int_\Gamma
J(\frac{d(x,y)}\eps) (\bU^e(t,y)-\bU^e(t,x))(\varphi(t,y)-\varphi(t,x) dx dy dt \Big|^2\\
&\leq \|\bu_0\|_{L^2(\Gamma)}^2 \frac{\eps^{-3}}4\int_0^T \int_\Gamma\int_\Gamma
J(\frac{d(x,y)}\eps) (\varphi(t,y)-\varphi(t,x))^2dxdydt.
\end{align*}
It is sufficient to show that  the following holds for any $\psi \in D(\mathcal{Q}_\Gamma)$ and $\eps>0$:
\begin{equation}\label{ineg.h1.new}
 \eps^{-3}\int_\Gamma\int_\Gamma
J(\frac{d(x,y)}\eps) (\psi(y)-\psi(x))^2dxdy\leq C(\Gamma,J) \int _{\Gamma} (\psi_x^2+\psi^2)dx.
\end{equation}
To prove that we split the integral in the left hand side in integrals over adjacent edges or not. Let us take two edges $e$ and $e'$ having no common point. In view of Step III
\begin{equation}\label{edges.1}
 \eps^{-3}\int_{e}\int_{e'}
J\Big(\frac{d(x,y)}\eps \Big) (\psi(y)-\psi(x))^2dxdy\leq C(\Gamma,J) \int _{e\cup e'}  \psi^2(x)dx.
\end{equation}
When two edges $e$ and $e'$ have a common endpoint we can parametrize them as in Step II and then $d(x,y)=|x-y|$, $\psi\in H^1(I\cup I')$ and we can use the real line case (see for example \cite[Th.~1]{MR3586796})
\begin{align*}
 \eps^{-3}\int_{e\cup e'}\int_{e\cup e'}
J\Big(\frac{d(x,y)}\eps \Big) (\psi(y)-\psi(x))^2dxdy&= \eps^{-3}\int_{I\cup I'}\int_{I\cup I'}
J\Big(\frac{|x-y|}\eps\Big) (\psi(y)-\psi(x))^2dxdy\\
&
\leq C(I, I',J) \int _{I\cup I'} ( \psi^2+\psi_x^2)dx.
\end{align*}
Hence \eqref{ineg.h1.new} holds and as a consequence
\[
\| \bU_t^\eps\|_{L^2((0,T),D(\mathcal{Q}_{\Gamma})')}\leq C(\Gamma,J)\|\bu_0\|_{L^2(\Gamma)}.
\]

{\bf Step V.} Let us now consider $\bu^\eps\in C([0,\infty),L^2(\Gamma))$ solution of problem \eqref{eq.eps.77}. We multiply equation \eqref{eq.eps.77} by a function $\varphi\in C_c([0,\infty),D(\mathcal{Q}_\Gamma))$, $\varphi_t\in C_c([0,\infty), L^2)$. It follows that,
\begin{align*}
\label{}
 \int_{0}^\infty \int _\Gamma \bu^\eps \varphi_t dxdt
 +\frac {\eps^{-3} }2\int_0^\infty   \int_{\Gamma} \int_{\Gamma} J\Big(\frac{d(x,y)}\eps \Big) (\bu^\eps(t,y)-\bu^\eps(t,x))&(\varphi(t,y)-\varphi(t,x))dxdydt \\
&+ \int_{\Gamma} \bu_0(x)\varphi(0,x)dx =0.
\end{align*}
Since $\bu^\epsilon\rightharpoonup \bU$ in $L^2((0,\infty),L^2(\Gamma))$ we have
\[
 \int_{0}^\infty \int _\Gamma \bu^\eps \varphi_t dxdt \rightarrow \int_{0}^\infty \int _\Gamma \bU \varphi_t dxdt.
\]
In view of Step I and Step III we get under the assumption \eqref{second.momentum.J}
\begin{align*}
 \frac {\eps^{-3} }2\int_0^\infty   \int_{\Gamma} \int_{\Gamma} J\Big(\frac{d(x,y)}\eps \Big) (\bu^\eps(t,y)-\bu^\eps(t,x))&(\varphi(t,y)-\varphi(t,x))dxdydt
\\
 &\rightarrow
 \int_0^\infty \int _{\Gamma} \bU_x (t,x)\varphi_x (t,x) dxdt.
\end{align*}
Hence  the limit point $\bU\in C([0,\infty),L^2(\Gamma))\cap  L^2_{loc}((0,\infty), D(\mathcal{Q}_\Gamma))$ satisfies
\[
  \int_{0}^\infty \int _\Gamma \bU (t,x) \varphi_t (t,x) dxdt
  -\int _0^\infty \int_\Gamma \bU_x(t,x) \varphi_x (t,x) dx dt
  +\int_{\Gamma} \bu_0(x)\varphi(0,x)dx=0.
\]
Classical arguments for the classical heat equation shows that
 $\bU$ is the unique solution to the heat equation in $\Gamma$ with initial datum $\bu_0\in L^2(\Gamma)$.

 The proof is now complete.
\end{proof}

\subsection{Asymptotic behaviour for the nonlocal evolution problem} \label{sect-nonlocal}

Now our goal is to analyze the behaviour of solutions to the nonlocal problem.

Before entering into the statements and proofs of our main results let us prove two auxiliary results
that will be needed in order to obtain decay bounds for the solutions and the asymptotic behaviour.
For the first one we follow ideas from \cite{MR2542582} but adapted to the graphs having some infinite edges.

\subsubsection{Preliminaries}
We now give a decomposition similar to the one done in \cite{MR2542582} but on half line intervals. We assume that function $J$ belongs to $L^1(\rr)$ and it is positive in a neighborhood of the origin. To simplify the presentation we introduce the bilinear form
\[
E^J_I(u,v)=\int_I \int_I  J(x-y)(u(x)-u(y))(v(x)-v(y))dxdy.
\]

\begin{lemma}
	\label{decom.1}Let $I$ be the half line $(0,\infty)$ or the real line $\rr$.
For any $u\in L^2(I)$ there exists a decomposition $u=v+w$ such that
\begin{equation}
\label{est.v}
  \|v_x\|_{L^2(I)}^2+  \|w\|_{L^2(I)}^2\leq C(J) E^J_I(u,u)
\end{equation}
 and for any $a>0$ their norms satisfy
\begin{equation}
\label{est.v.p.loc}
  \|v\|_{L^p(0,a)}+ \|w\|_{L^p(0,a)}\leq C(J) \|u\|_{L^p(0,a+1)}, \qquad \forall\ 1\leq p\leq \infty.
  \end{equation}
\end{lemma}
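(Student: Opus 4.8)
The idea is to build the decomposition $u = v + w$ by convolution with a suitable mollifier adapted to the kernel $J$. Since $J$ is positive in a neighborhood of the origin, there is $\delta > 0$ and $c_0 > 0$ with $J(z) \geq c_0$ for $|z| \leq \delta$. Fix a smooth bump $\rho \geq 0$ supported in $(-\delta/2, \delta/2)$ with $\int \rho = 1$, and extend $u$ to all of $\R$ by reflection when $I = (0,\infty)$ (so that the extension, still called $u$, has $L^2$ and $L^p_{loc}$ norms controlled by those of the original $u$, and reflection does not increase the relevant Gagliardo-type energy up to a constant). Then set $v := \rho * u$ and $w := u - v = u - \rho*u$, and restrict both back to $I$. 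The point is that $v$ is smooth (hence the $\|v_x\|_{L^2}$ term makes sense) while $w$ measures the local oscillation of $u$, which is exactly what $E^J_I(u,u)$ controls.

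First I would prove \eqref{est.v}. For $w = u - \rho*u$ we write $w(x) = \int \rho(h)(u(x) - u(x-h))\,dh$, so by Jensen/Cauchy--Schwarz $\|w\|_{L^2(\R)}^2 \leq \int \rho(h) \|u(\cdot) - u(\cdot - h)\|_{L^2(\R)}^2\,dh$; since $\rho$ is supported in $|h| \leq \delta/2$ and on that range $J(h) \geq c_0$, we get $\|u(\cdot)-u(\cdot-h)\|_{L^2}^2 \leq c_0^{-1} J(h) \|u(\cdot)-u(\cdot-h)\|_{L^2}^2$, and integrating in $h$ (after changing variables $y = x-h$ inside) bounds this by $c_0^{-1} E^J_{\R}(u,u) \lesssim C(J) E^J_I(u,u)$ (the reflection accounting for the constant). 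For the gradient term, $v_x = \rho' * u$, and since $\rho' = \rho'*\text{(something)}$... more directly: $v_x(x) = (\rho' * u)(x) = \int \rho'(h) u(x-h)\,dh = -\int \rho'(h)(u(x) - u(x-h))\,dh$ because $\int \rho' = 0$; then the same Cauchy--Schwarz argument with $\|\rho'\|_{L^1}$ as the extra constant gives $\|v_x\|_{L^2(\R)}^2 \leq \|\rho'\|_{L^1}\int |\rho'(h)| \|u(\cdot)-u(\cdot-h)\|_{L^2}^2\,dh \lesssim C(J) E^J_I(u,u)$, again using $|h|\leq \delta/2$ and $J \geq c_0$ there.

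Next, \eqref{est.v.p.loc} is easier: it is just a statement that convolution with an $L^1$ kernel of small support is bounded on $L^p$ of a slightly enlarged interval. For $x \in (0,a)$ the value $v(x) = (\rho*u)(x)$ depends only on $u$ on $(x - \delta/2, x+\delta/2) \subset (-\delta/2, a + \delta/2)$, and taking $\delta \leq 1$ we have (after reflection) $\|v\|_{L^p(0,a)} \leq \|\rho\|_{L^1}\|u\|_{L^p(0, a+1)} = \|u\|_{L^p(0,a+1)}$ by Young's inequality, where I absorb the reflection across $0$ into the constant $C(J)$; then $\|w\|_{L^p(0,a)} \leq \|u\|_{L^p(0,a)} + \|v\|_{L^p(0,a)} \leq C(J)\|u\|_{L^p(0,a+1)}$, uniformly for $1 \leq p \leq \infty$.

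The only genuinely delicate point is the treatment of the half-line case $I = (0,\infty)$: one must make sure the reflection extension $\tilde u(x) = u(|x|)$ does not blow up the Gagliardo energy, i.e. that $E^J_{\R}(\tilde u, \tilde u) \leq C\, E^J_I(u,u)$. This follows by splitting the double integral over $\R \times \R$ into the four quadrants, changing variables $x \mapsto -x$ and/or $y \mapsto -y$ to bring each piece back to $(0,\infty)\times(0,\infty)$, and using that $J$ is even together with the elementary inequality $(a-b)^2 \leq 2(a-c)^2 + 2(c-b)^2$ applied with the reflection point; monotonicity of $J$ (which ensures $J(x+y) \leq J(x-y)$ for $x,y>0$) handles the cross-quadrant terms where the arguments of $J$ change. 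This is exactly the kind of bookkeeping done in \cite{MR2542582}, \cite{MR3586796}, so I would simply cite those references for the half-line reduction and otherwise carry out the convolution argument as above. Once the extension is in hand, everything reduces to the whole-line estimates, which are the short computations sketched above.
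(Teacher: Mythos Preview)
Your approach is correct but differs from the paper's in one key structural choice. You use a mollifier $\rho$ supported symmetrically in $(-\delta/2,\delta/2)$ and handle the half-line case by even reflection $\tilde u(x)=u(|x|)$, then verify that $E^J_{\rr}(\tilde u,\tilde u)\le C\,E^J_I(u,u)$ via the monotonicity of $J$ (the cross-quadrant terms carry $J(x+y)\le J(|x-y|)$). The paper instead avoids any extension: it takes $\rho$ supported in $(-1,0)\cap{\rm supp}\,J$ with $|\rho|+|\rho'|\le c(J)|J|$, so that
\[
v(x)=\int_0^\infty \rho(x-y)\,u(y)\,dy=\int_x^{x+1}\rho(x-y)\,u(y)\,dy
\]
depends only on values of $u$ to the \emph{right} of $x$, hence never leaves $(0,\infty)$. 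The identities $\int_0^\infty \rho'(x-y)\,dy=0$ and $\int_0^\infty \rho(x-y)\,dy=1$ (which hold because $\rho$ is supported in $(-1,0)$ and $x>0$) then give directly $v_x(x)=\int_0^\infty \rho'(x-y)(u(y)-u(x))\,dy$ and $w(x)=\int_0^\infty \rho(x-y)(u(x)-u(y))\,dy$, and the pointwise bound $|\rho|+|\rho'|\le c(J)|J|$ feeds straight into $E^J_I(u,u)$.

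What each buys: the paper's one-sided kernel is slicker on the half-line (no extension, no quadrant bookkeeping, and the localized estimate $\|v\|_{L^p(0,a)}\lesssim\|u\|_{L^p(0,a+1)}$ is immediate since $v(x)$ only sees $u$ on $(x,x+1)$). Your reflection route is more classical and reduces everything to the whole-line case uniformly, at the cost of the extra energy comparison under reflection; both ultimately rely on the monotonicity/positivity of $J$ near the origin to dominate $\rho,\rho'$ by $J$.
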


\begin{proof}The case when  $I$ is  the whole real line  has been proved in  \cite[Th.~2.1]{MR2542582}. We now consider the case $I=(0,\infty)$.

Since $J$ is positive near the origin we can choose a smooth nonnegative function	
	 $\rho$ to be  supported in $(-1,0)\cap {\rm supp} \ J$,  with $\int _{-1}^0\rho =1$ and satisfying
	$$|\rho(z)|+|\rho'(z)|\leq c(J) |J(z)|,\quad  z\in \rr.$$
{	In particular this implies that
\begin{equation}
\label{rho-J}  \|\rho\|_{L^1(\rr)}+\|\rho'\|_{L^1(\rr)}\leq C(J).
\end{equation}
}
	
	For any $x>0$ we set
	\[
	v(x)=\int_0^\infty \rho(x-y)u(y)dy=\int_x^{x+1} \rho(x-y)u(y)dy.
	\]
	Using H\"older inequality integrating with respect to the measure $\rho(x-y)dy$ we immediately obtain that
	\[
	\|v\|^p_{L^p((0,a))}\leq \|\rho\|_{L^1(\rr)} \int _0^a \int _x^{x+1}  |\rho(x-y)||u(y)|^pdydx\leq C(J) \|u\|_{L^p(0,a+1)}^p.
	\]
	{Letting} $w=u-v$ we obtain the last property.
	
	We now prove that this decomposition satisfies the first  property.
Observe that
	since $\rho$ is compactly supported in $(-1,0)$ we have for any $x>0$ that
	$$
	\int _0^\infty \rho'(x-y)dy=\int _{-\infty}^x \rho'(z)dz=\rho(x)=0.
	$$
	Thus
		\[
	v_x(x)=\int_0^\infty \rho'(x-y)u(y)dy=\int _0^\infty \rho'(x-y)(u(y)-u(x))dy
	\]
and
	\[
	|v_x(x)|^2\leq \int _0^\infty |\rho'(x-y)|(u(y)-u(x))^2dy \int _0^\infty |\rho'(x-y)|dy.
	\]
	{ Using \eqref{rho-J} and the fact that $|\rho'(z)|\leq C(J)|J(z)|$ it follows that}
	\[
	\int _0^\infty |v_x(x)|^2dx \leq \|\rho'\|_{L^1(\rr)}\int _0^\infty \int _0^\infty|\rho'(x-y)|(u(y)-u(x))^2dydx\leq C(J) E^J_I(u,u)	.\]	
	On the other hand, since $\int _{-1}^0\rho =1$ and it is supported in $(-\infty,0)$ we obtain
	 $$\int _0^\infty \rho(x-y)dy=\int _{-\infty}^x\rho(z)dz=\int _{-\infty}^0 \rho(z)dz$$
	 and thus function	
	$w=u-v$ can be written as
	\[
	w(x)=
	\int _0^\infty (u(x)-u(y))\rho(x-y)dy, \ x>0,
	\]
{ Using  that $|\rho'(z)|\leq C(J)|J(z)|$ and \eqref{rho-J}  it follows that $w$ satisfies}
	\[
	\int _0^\infty |w(x)|^2dx\leq \int_{-\infty}^0 |\rho(z)|dz  \int _0^\infty \int _0^\infty|\rho(x-y)|(u(y)-u(x))^2dydx\leq C(J) E^J_I(u,u).	\]
	This finishes the proof.
\end{proof}

\begin{lemma}
	\label{nash.like}Let $I$ be the half line $(0,\infty)$ or the real line $\rr$. For any $p\in (1,\infty)$ and $u\in L^p(I) $ it holds
	\begin{equation}
\label{p.nash}
  \|u\|_{L^p(I)}^p \leq C(p,J) \Big(\|u\|_{L^1(I)}^{\frac{2p}{p+1}}
  E^J_I(|u|^{p/2},|u|^{p/2})^{\frac{p-1}{p+1}}  + E_I^J(|u|^{p/2},|u|^{p/2}) \Big).
\end{equation}
	\end{lemma}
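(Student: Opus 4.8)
The plan is to reduce \eqref{p.nash} to a scalar Gagliardo--Nirenberg (Nash-type) estimate on the interval $I$, using the decomposition from Lemma~\ref{decom.1}. Write $g=|u|^{p/2}$, so that $\|u\|_{L^p(I)}^p=\|g\|_{L^2(I)}^2$ and the right-hand side of \eqref{p.nash} is built from $E:=E^J_I(g,g)$ and $m:=\|u\|_{L^1(I)}=\int_I g^{2/p}$; if $E$ or $m$ is infinite there is nothing to prove, so assume both finite. Applying Lemma~\ref{decom.1} to $g$ yields $g=v+w$ with $\|v_x\|_{L^2(I)}^2+\|w\|_{L^2(I)}^2\le C(J)\,E$ (from \eqref{est.v}) and, letting $a\to\infty$ in \eqref{est.v.p.loc}, $\|v\|_{L^q(I)}+\|w\|_{L^q(I)}\le C(J)\|g\|_{L^q(I)}$ for every $1\le q\le\infty$. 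Since $\|g\|_{L^2(I)}^2\le 2\|v\|_{L^2(I)}^2+2\|w\|_{L^2(I)}^2\le 2\|v\|_{L^2(I)}^2+C(J)\,E$, the whole proof comes down to showing $\|v\|_{L^2(I)}^2\le C(p,J)\,m^{\frac{2p}{p+1}}E^{\frac{p-1}{p+1}}$.

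First I would treat the range $1<p\le 2$, where $2/p\ge 1$. Here \eqref{est.v.p.loc} (with $a\to\infty$) gives $\|v\|_{L^{2/p}(I)}\le C(J)\|g\|_{L^{2/p}(I)}=C(J)\,m^{p/2}$, and the classical one-dimensional interpolation inequality on $I$ (valid on $(0,\infty)$ by even reflection to $\R$)
\[
\|v\|_{L^2(I)}\le C(p)\,\|v_x\|_{L^2(I)}^{\frac{p-1}{p+1}}\,\|v\|_{L^{2/p}(I)}^{\frac{2}{p+1}},
\]
combined with $\|v_x\|_{L^2(I)}^2\le C(J)E$, yields exactly $\|v\|_{L^2(I)}^2\le C(p,J)\,m^{\frac{2p}{p+1}}E^{\frac{p-1}{p+1}}$, hence \eqref{p.nash}.

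For $p\ge 2$ the exponent $2/p$ drops below $1$, where \eqref{est.v.p.loc} is unavailable (indeed false), so I would instead invoke Nash's inequality $\|v\|_{L^2(I)}^3\le C\|v_x\|_{L^2(I)}\|v\|_{L^1(I)}^2$, use $\|v\|_{L^1(I)}\le C(J)\|g\|_{L^1(I)}=C(J)\int_I|u|^{p/2}$, and then interpolate $\int_I|u|^{p/2}\le \|u\|_{L^1(I)}^{\frac{p}{2(p-1)}}\|u\|_{L^p(I)}^{\frac{p(p-2)}{2(p-1)}}$ (legitimate because $p/2\ge 1$). Combining these with $\|v_x\|_{L^2(I)}^2\le C(J)E$ gives
\[
\|u\|_{L^p(I)}^p\le 2\|v\|_{L^2(I)}^2+C(J)E\le C(p,J)\,E^{1/3}\,m^{\frac{2p}{3(p-1)}}\,\|u\|_{L^p(I)}^{\frac{2p(p-2)}{3(p-1)}}+C(J)E.
\]
Since $\tfrac{2p(p-2)}{3(p-1)}<p$, a Young inequality absorbs the $\|u\|_{L^p(I)}$-factor into the left-hand side; checking that the exponents collapse, namely $p-\tfrac{2p(p-2)}{3(p-1)}=\tfrac{p(p+1)}{3(p-1)}$, so that the surviving power of $E^{1/3}m^{\frac{2p}{3(p-1)}}$ is $E^{\frac{p-1}{p+1}}m^{\frac{2p}{p+1}}$, gives \eqref{p.nash}.

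I expect the only real obstacle to be this $p\ge 2$ case: one must notice that Lemma~\ref{decom.1} controls $v$ in $L^q$ only for $q\ge 1$, so the direct Gagliardo--Nirenberg route of the $p\le 2$ case is not available and one has to pass through $\int_I|u|^{p/2}$, interpolate, and absorb — after which the bookkeeping of Young's exponents, though routine, must be done with care. The remaining ingredients (the mass and energy bounds on $v,w$ from Lemma~\ref{decom.1}, and the scalar Nash / Gagliardo--Nirenberg inequalities on a half-line) are standard.
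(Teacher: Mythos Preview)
Your argument is correct. For $1<p\le 2$ it is essentially the paper's proof: both decompose $g=|u|^{p/2}=v+w$ via Lemma~\ref{decom.1}, bound $\|w\|_{L^2}^2$ by $E$, control $\|v\|_{L^{2/p}}$ by $\|u\|_{L^1}^{p/2}$, and pass to $\|v\|_{L^2}$ by a Gagliardo--Nirenberg step; the only cosmetic difference is that the paper does this last step in two stages (interpolate $L^{2/p}$--$L^\infty$, then use $\|v\|_{L^\infty}^2\le 2\|v_x\|_{L^2}\|v\|_{L^2}$ and bootstrap), whereas you invoke the scalar inequality $\|v\|_{L^2}\le C\|v_x\|_{L^2}^{(p-1)/(p+1)}\|v\|_{L^{2/p}}^{2/(p+1)}$ directly.

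The genuine difference is in the range $p\ge 2$: the paper does not prove this case at all but cites \cite[Th.~1.1]{CM}, while you give a self-contained argument through Nash's inequality for $v$, the interpolation $\int_I|u|^{p/2}\le\|u\|_{L^1}^{p/(2(p-1))}\|u\|_{L^p}^{p(p-2)/(2(p-1))}$, and a Young absorption. Your exponent check is right: with $\gamma=\tfrac{2(p-2)}{3(p-1)}<1$ one has $1-\gamma=\tfrac{p+1}{3(p-1)}$, so after absorbing $\|u\|_{L^p}^{p\gamma}$ the surviving term is exactly $E^{(p-1)/(p+1)}m^{2p/(p+1)}$. This buys self-containment for the full range $p>1$ at the price of one extra step, whereas the paper's route keeps the presentation shorter by outsourcing $p\ge 2$ to the literature.
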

	
The case $p\geq 2$ follows from \cite[Th.~1.1]{CM} so the our contribution here is to deal with the case $p\in (1,2)$. Extension to any dimension
considering unbounded exterior domains can be done but it is out of the scope of this article. 	
	
	\begin{proof}
	Let us consider $p\in (1,2)$ since the other cases have been proved in \cite{CM}. Let $u\in L^p(I)$. Then $|u|^{\frac{p}{2}}\in L^2(I)$.
	We consider the decomposition of $|u|^{\frac{p}{2}}=v+w$ as in Lemma \ref{decom.1} or \cite[Th.~2.1]{MR2542582} (when $I=\rr$). In both cases $v=\rho\ast |u|^{\frac{p}{2}}:=\int_I \rho(x-y)|u|^{\frac{p}{2}}(y)dy$ and since $\frac{2}{p}>1$ we have
	\[
	\|v\|_{L^{\frac{2}{p}}(I)}=\|\rho\ast |u|^{\frac{p}2}\|_{L^{\frac{2}{p}}(I)}\leq
	\|\rho\|_{L^1(I)}\| |u|^{\frac{p}2}\|_{L^{\frac{2}{p}}(I)}\leq C(J)\| u\|_{L^1(I)}^{\frac{p}2}.
	\]
 Also by Lemma \ref{decom.1}
  \[
  \|v_x\|_{L^2(I)}^2+\|w\|^2_{L^2(I)} \leq C(J)  E_I^J(|u|^{\frac{p}{2}},|u|^{\frac{p}{2}}).
  \]
 Using the interpolation inequality
and that $\|v\|_{L^\infty(I)}^2\leq 2\|v_x\|_{L^2(I)} \|v\|_{L^2(I)}$    we find
  \begin{align*}
\label{}
  \|v\|_{L^2(I)}&\leq \|v\|_{L^{2/p}(I)}^{\frac 1p}\|v\|_{L^\infty(I)}^{\frac{p-1}p}\leq C(J) \|u\|_{L^1(I)}^{\frac 12} \|v_x\|_{L^2(I)}^{\frac{p-1}{2p}}  \|v\|_{L^2(I)}^{\frac{p-1}{2p}}\\
  &\leq
  C(J) \|u\|_{L^1(I)}^{\frac{1}2} E_I^J(|u|^{\frac{p}2},|u|^{\frac{p}2})^{\frac{p-1}{4p}}  \|v\|_{L^2(I)}^{\frac{p-1}{2p}}.
\end{align*}
 It implies that
 \[
  \|v\|_{L^2(I)}^2\leq C(J) \|u\|_{L^1(I)}^{\frac{2p}{p+1}}
   E_I^J(|u|^{\frac{p}2},|u|^{\frac{p}2})^{\frac{p-1}{p+1}}.
 \]
  Finally we get
  \begin{align*}
\label{}
  \|u\|_{L^p(I)}^p&=\| |u|^{\frac{p}2}\|^2_{L^2(I)} \leq
  2(\|v\|_{L^2(I)}^2 +\|w\|^2_{L^2(I)})\\
  &\leq C(J) \Big(\|u\|_{L^1(I)}^{\frac{2p}{p+1}}
   E(|u|^{\frac{p}2},|u|^{\frac{p}2})^{\frac{p-1}{p+1}} +  E(|u|^{\frac{p}2},|u|^{\frac{p}2})\Big)
\end{align*}
	which finishes the proof.
	\end{proof}

\subsubsection{Decay of the solutions.}
In this section, our main result read as follows: we prove that the problem is well posed and a bound for the decay of solutions.

\begin{theorem}
	\label{th.nonlocal.decay.1}
	For any  $\bu_0\in L^1(\Gamma)\cap  L^p(\Gamma) $, $1\leq p<\infty$, the solution $\bu$ of system \eqref{eq.1} satisfies
\begin{equation}
\label{decay.nonlocal.lp}
  \|\bu(t)\|_{L^p(\Gamma)}\leq \frac {C(\|\bu_0\|_{ L^1(\Gamma)},\|\bu_0\|_{ L^p(\Gamma)})}{(1+t)^{\frac 12(1-\frac 1p)}}, \quad \forall\ t>0.
\end{equation}
In addition, for any  $\bu_0\in L^1(\Gamma)\cap  L^2(\Gamma)$
\begin{equation}
	\label{grad.estimate.2}
 \int _{\Gamma}\int_{\Gamma} J(d(x,y)) (\bu(t,x)-\bu(t,y))^2dxdy\leq
 C(\|\bu_0\|_{ L^1(\Gamma)},\|\bu_0\|_{ L^2(\Gamma)}) t^{-\frac32}.
\end{equation}
\end{theorem}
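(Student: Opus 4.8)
The case $p=1$ is precisely the $L^1$-contractivity already contained in Theorem~\ref{th.nonlocal.1}, so from now on assume $1<p<\infty$. The plan is a Nash-type energy argument whose only genuinely new ingredient is a \emph{global} Nash inequality on $\Gamma$; once that is in place the conclusion follows from an elementary ODE comparison. First I would record the dissipation identity: since $L$ is bounded on every $L^q(\Gamma)$, the solution $\bu(t)=e^{Lt}\bu_0$ is analytic with values in $L^1(\Gamma)\cap L^p(\Gamma)$ and the $L^q$-norms are nonincreasing, and a standard computation (differentiation under the integral sign, justified by the analyticity of $t\mapsto\bu(t)$ and H\"older's inequality, followed by symmetrising the double integral) gives
\[
\frac{d}{dt}\|\bu(t)\|_{L^p(\Gamma)}^p=-\frac{p}{2}\int_\Gamma\!\int_\Gamma J(d(x,y))\big(\bu(t,x)-\bu(t,y)\big)\big(|\bu(t,x)|^{p-2}\bu(t,x)-|\bu(t,y)|^{p-2}\bu(t,y)\big)\,dx\,dy .
\]
The pointwise inequality $(a-b)\big(|a|^{p-2}a-|b|^{p-2}b\big)\ge\tfrac{4(p-1)}{p^2}\big(|a|^{p/2}-|b|^{p/2}\big)^2$ then yields
\[
\frac{d}{dt}\|\bu(t)\|_{L^p(\Gamma)}^p\le -c_p\,\mathcal{E}^J_\Gamma\big(|\bu(t)|^{p/2},|\bu(t)|^{p/2}\big),\qquad c_p:=\tfrac{2(p-1)}{p}>0 .
\]

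The core of the argument is the global Nash inequality: there is a constant $C=C(p,J,\Gamma)$ such that
\[
\|\bv\|_{L^p(\Gamma)}^p\le C\Big(\|\bv\|_{L^1(\Gamma)}^{\frac{2p}{p+1}}\,\mathcal{E}^J_\Gamma\big(|\bv|^{p/2},|\bv|^{p/2}\big)^{\frac{p-1}{p+1}}+\mathcal{E}^J_\Gamma\big(|\bv|^{p/2},|\bv|^{p/2}\big)\Big)\qquad\text{for all }\bv\in L^1(\Gamma)\cap L^p(\Gamma).
\]
To prove it I would transplant Lemma~\ref{nash.like} from the half line onto each edge. Because $\Gamma$ is connected, has finitely many edges and at least one infinite edge, every edge $e$ is contained in an isometrically embedded ray $R_e\subset\Gamma$, i.e.\ a simple path running through $e$ and then out to infinity along some infinite edge (an elementary consequence of connectedness); identifying $R_e$ with $[0,\infty)$ by arclength $s$, the intrinsic distance satisfies $d(x(s),x(t))\le|s-t|$, so the monotonicity of $J$ gives $J(|s-t|)\le J(d(x(s),x(t)))$. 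Consequently, writing $g=|\bv|^{p/2}$,
\[
E^J_{[0,\infty)}\big(g|_{R_e},g|_{R_e}\big)\le \mathcal{E}^J_\Gamma(g,g),\qquad \big\|\bv|_{R_e}\big\|_{L^1([0,\infty))}\le\|\bv\|_{L^1(\Gamma)},
\]
the second bound because $R_e$ meets each edge at most once. Applying Lemma~\ref{nash.like} on $I=[0,\infty)$ to $\bv|_{R_e}$, using $\|\bv^e\|_{L^p(I_e)}\le\|\bv|_{R_e}\|_{L^p([0,\infty))}$ (recall $I_e\subset R_e$), and summing over the finitely many edges of $\Gamma$ gives the claim. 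This is the main obstacle: the nonlocal energy $\mathcal{E}^J_\Gamma$ couples points through the intrinsic distance $d$, while the one-dimensional estimate of Lemma~\ref{nash.like} lives only on $(0,\infty)$ and $\rr$, so one must first make it reach the finite edges buried inside $\Gamma$; the ray construction together with the monotonicity of $J$ is exactly what accomplishes this, and it is also the source of the dependence of $C$ on $\Gamma$.

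It remains to integrate. Write $y(t)=\|\bu(t)\|_{L^p(\Gamma)}^p$ and $M_1=\|\bu_0\|_{L^1(\Gamma)}\ge\|\bu(t)\|_{L^1(\Gamma)}$. Substituting the Nash inequality into the differential inequality and separating the cases $\mathcal{E}^J_\Gamma(|\bu(t)|^{p/2},|\bu(t)|^{p/2})\le1$ and $>1$ (in which the first, respectively the second, term on the right of the Nash inequality dominates), one arrives at $y'\le -c\min\{y^{\beta},y\}$ with $\beta=\frac{p+1}{p-1}>1$ and $c=c(p,J,\Gamma,M_1)>0$. The linear branch forces $y$ below $1$ after a time $T_1$ depending only on $\|\bu_0\|_{L^1(\Gamma)}$ and $\|\bu_0\|_{L^p(\Gamma)}$, and integrating $y'\le -cy^{\beta}$ for $t\ge T_1$ gives $y(t)\lesssim (t-T_1)^{-1/(\beta-1)}=(t-T_1)^{-(p-1)/2}$, hence $\|\bu(t)\|_{L^p(\Gamma)}\lesssim (1+t)^{-\frac12(1-\frac1p)}$ for $t\ge 2T_1+1$; combined with the contraction bound $\|\bu(t)\|_{L^p(\Gamma)}\le\|\bu_0\|_{L^p(\Gamma)}$ on the remaining bounded range of $t$, this gives \eqref{decay.nonlocal.lp}. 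Finally, for \eqref{grad.estimate.2} with $\bu_0\in L^1(\Gamma)\cap L^2(\Gamma)$, one feeds \eqref{decay.nonlocal.lp} at $p=2$ into \eqref{grad.estimate} applied on $[t/2,t]$ (the equation being autonomous): $\mathcal{E}^J_\Gamma(\bu(t),\bu(t))\le 2t^{-1}\|\bu(t/2)\|_{L^2(\Gamma)}^2\lesssim t^{-1}(1+t)^{-1/2}\lesssim t^{-3/2}$ for $t\ge1$, while for $0<t\le1$ the bound already follows from \eqref{grad.estimate}, since there $t^{-1}\le t^{-3/2}$.
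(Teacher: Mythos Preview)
Your proposal is correct and follows essentially the same route as the paper: the dissipation inequality for $\|\bu(t)\|_{L^p}^p$, the transplant of Lemma~\ref{nash.like} onto each edge via an infinite path through that edge (using the monotonicity of $J$ and $d(x(s),x(t))\le|s-t|$), the sum over the finitely many edges to get the global Nash inequality, and the ODE comparison are all exactly what the paper does. The only cosmetic difference is that the paper packages the final step as $y'\le -f^{-1}(y)$ with $f(s)\asymp s^{(p-1)/(p+1)}$ near $0$ and defers to \cite[Lemma~3.1]{MR2542582}, whereas you spell out the $\min\{y^\beta,y\}$ dichotomy by hand; and the derivation of \eqref{grad.estimate.2} from \eqref{grad.estimate} and \eqref{decay.nonlocal.lp} is identical.
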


\begin{proof}[Proof of Theorem \ref{th.nonlocal.decay.1}]
Observe that property \eqref{grad.estimate.2} is a consequence of \eqref{grad.estimate} and  of the decay property
\eqref{decay.nonlocal.lp}. Indeed, by \eqref{grad.estimate} the left hand side of \eqref{grad.estimate.2} satisfies
\[
 \mathcal{E}^J_\Gamma(\bu(2t),\bu(2t))\leq \frac{\|\bu(t)\|^2_{L^2(\Gamma)}}{t}\leq
 C(\|\bu_0\|_{ L^1(\Gamma)},\|\bu_0\|_{ L^2(\Gamma)}) t^{-\frac32}.
\]

We now prove the decay property \eqref{decay.nonlocal.lp}.
We use the results obtained in the previous section and the following energy estimate obtained by multiplying \eqref{eq.1} with $|\bu|^{p-2}\bu$ and integrating in the space variable
  \begin{align}
\label{energy.nonlocal.pp}
  \frac 1p \frac {d}{dt}&\int _{\Gamma} |\bu|^p(t,x)dx\\
  &=-\frac 12 \int _{\Gamma}\int_{\Gamma}J(x-y)  (\bu(t,x)-\bu(t,y))( |\bu(t,x)|^{p-2}\bu(t,x)-|\bu(t,y)|^{p-2}\bu(t,y))dxdy\\
  &\leq -c(p) \int _{\Gamma}\int_{\Gamma} J(x-y)( |\bu(t,x)|^{\frac{p}2}-|\bu(t,y)|^{\frac{p}2})^2dxdy
  =-c(p)\mathcal{E}^J_\Gamma ( |\bu(t)|^{\frac{p}2},|\bu(t)|^{\frac{p}2}).
\end{align}

Let us now fix an edge $e$. For each such edge we choose a path $\Gamma_e$ that connects $e$ with $\Gamma_\infty$. It may happen to exist many such paths but we choose one of them. When an edge $e$ has infinite length we can chose $\Gamma_e$ to be exactly $e$.
	This path $\Gamma_e$ can be parametrized by the infinite interval $I_e=[0,\infty)$.
	We set $u_e$ to be the restriction of $\bu$ to $\Gamma_e$. It is clear that
	\[
	\|u_e(t)\|_{L^1(\Gamma_e)}\leq \|\bu (t)\|_{L^1(\Gamma)}\leq \|\bu_0\|_{L^1(\Gamma)}.
	\]
	We apply Lemma \ref{nash.like} to each function $u_e(t)$ and obtain
\begin{align*}
\label{}
  \|u_e\|_{L^p(\Gamma_e)}^p &\leq C(p,J) \Big(\|u_e\|_{L^1(\Gamma_e)}^{\frac{2p}{p+1}}
  \mathcal{E}^J_{\Gamma_e} (|u_e|^{p/2},|u_e|^{\frac{p}2})^{\frac{p-1}{p+1}}  +  \mathcal{E}^J_{\Gamma_e} (|u_e|^{\frac{p}2},|u_e|^{\frac{p}2}) \Big)\\
  &\leq C(p,J) \Big(\|\bu_0\|_{L^1(\Gamma)}^{\frac{2p}{p+1}}
  \mathcal{E}^J_\Gamma (|\bu|^{\frac{p}2},|\bu|^{\frac{p}2})^{\frac{p-1}{p+1}}  +  \mathcal{E}^J_\Gamma (|\bu|^{\frac{p}2},|\bu|^{\frac{p}2}) \Big).
\end{align*}
	Using that $u^e=(u_e)_{|_e}=\bu_{|_e}$ and summing over all the edges (a finite number) of graph $\Gamma$ we get
	\begin{align*}
\label{}
  \|\bu(t)\|_{L^p(\Gamma)}^p&=\sum _{e\in E}  \|u^e\|^p_{L^p(e)}\leq
 \sum _{e\in E}   \|u_e\|_{L^p(\Gamma_e)}^p\\
 &\leq |E|C(p,J) \Big(\|\bu_0\|_{L^1(\Gamma)}^{\frac{2p}{p+1}}
  \mathcal{E}^J_\Gamma (|\bu(t)|^{\frac{p}2},|\bu(t)|^{\frac{p}2})^{\frac{p-1}{p+1}}  +  \mathcal{E}^J_\Gamma( |\bu|^{\frac{p}2},|\bu|^{\frac{p}2}) \Big)\\
  &=f(\mathcal{E}^J_\Gamma( |\bu|^{\frac{p}2},|\bu|^{\frac{p}2}) ),
\end{align*}
where
\[
f(s)= |E|C(p,J)  (\|\bu_0\|_{L^1(\Gamma)}^{\frac{2p}{p+1}}
  s^{\frac{p-1}{p+1}}  +  s \Big).
\]
This shows that
\[
  \frac 1{pc(p)} \frac {d}{dt}(\|\bu(t)\|_{L^p(\Gamma)}^p)\leq -f^{-1}(\|\bu(t)\|_{L^p(\Gamma)}^p).
\]
Using that $f(t)\simeq t^{\frac{p-1}{p+1}}$ as $t\simeq 0$ we obtain that
$f^{-1}(t)\simeq t^{\frac{p+1}{p-1}}$ as $t\simeq 0$.
The same arguments as in \cite[Lemma~3.1]{MR2542582} give us the desired decay estimate.
\end{proof}

\subsubsection{Asymptotic behaviour.}
Now, let us rescale the solution as we did for the local case, let
$\bu_\lambda:
\Gamma^\lambda\rightarrow\rr$, $\lambda>0$ be given by
\[
\bu_\lambda(t,x)=\lambda \bu(\lambda^2t, \lambda x), \quad x\in \Gamma^\lambda, t>0.
\]
It follows that $\bu_\lambda$ satisfies
	\begin{equation}
\label{eq.non.lambda}
  \left\{
  \begin{array}{ll}
 \displaystyle  \partial_t \bu_\lambda(t,x)=\lambda^3 \int_{\Gamma^\lambda} J(\lambda d(x,y)) (\bu_\lambda(t,y)-\bu_\lambda(t,x) )dy,\
 x\in \Gamma^\lambda, t>0,\\[10pt]
    \bu_\lambda(x,0)=\lambda \bu_0 (\lambda x), \ x\in \Gamma^\lambda.
  \end{array}
  \right.
\end{equation}
Using the estimates obtained in Theorem \ref{th.nonlocal.decay.1} for $\bu$ we get that $\bu_\lambda$ satisfies the following uniform estimates:

\begin{lemma}
	\label{uniform.nonocal.est}Let $1\leq p<\infty$. For any $\bu_0\in L^1(\Gamma)\cap L^p(\Gamma)$ it holds that
	\begin{equation}
\label{norma.p.non.la}
  \|\bu_\lambda(t)\|_{L^p(\Gamma^\lambda)}\leq C(p, \|\bu_0\|_{L^1(\Gamma)}, \|\bu_0\|_{L^p(\Gamma)}) t^{-\frac 12(1-\frac1p)},\  \forall t>0.
\end{equation}
Moreover, for any $\bu_0\in L^1(\Gamma)\cap L^2(\Gamma)$ the following bound
	\begin{equation}
\label{nest.energy.lambda}
\lambda^3   \int_{\Gamma^\lambda} \int_{\Gamma^\lambda}
J(\lambda d(x,y))(\bu_\lambda(t,x)-\bu_\lambda(t,y))^2dxdy \leq C(p, \|\bu_0\|_{L^1(\Gamma)}, \|\bu_0\|_{L^2(\Gamma)} ) t^{-\frac32},\quad \forall \ t>0,
\end{equation}
holds uniformly in $\lambda>0$.
\end{lemma}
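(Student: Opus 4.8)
The plan is to obtain both bounds in Lemma \ref{uniform.nonocal.est} directly from the decay estimates of Theorem \ref{th.nonlocal.decay.1} via the change of variables that underlies the rescaling. The key observation is that the coordinate map $x\mapsto \lambda x$ carries each rescaled interval $I_e^\lambda=[0,l_e/\lambda]$ onto $I_e=[0,l_e]$ (and leaves the infinite edges invariant), sends $\Gamma^\lambda$ onto $\Gamma$, and multiplies all distances by $\lambda$, so that $J(\lambda\, d_{\Gamma^\lambda}(x,y))=J(d_\Gamma(\lambda x,\lambda y))$.

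First I would record the $L^p$ scaling identity: performing the substitution $y_e=\lambda x_e$ edge by edge in the definition $\bu_\lambda(t,x)=\lambda\bu(\lambda^2 t,\lambda x)$ gives
\[
\|\bu_\lambda(t)\|_{L^p(\Gamma^\lambda)}^p=\lambda^{p-1}\,\|\bu(\lambda^2 t)\|_{L^p(\Gamma)}^p .
\]
Inserting the decay bound \eqref{decay.nonlocal.lp}, and using $\frac p2(1-\frac1p)=\frac{p-1}{2}$ together with $1+\lambda^2 t\ge \lambda^2 t$, I would estimate
\[
\|\bu_\lambda(t)\|_{L^p(\Gamma^\lambda)}^p\le \lambda^{p-1}\frac{C^p}{(1+\lambda^2 t)^{\frac{p-1}{2}}}
= C^p\Big(\frac{\lambda^2}{1+\lambda^2 t}\Big)^{\frac{p-1}{2}}\le C^p\, t^{-\frac{p-1}{2}},
\]
which is exactly \eqref{norma.p.non.la} after taking $p$-th roots, with the constant depending only on $p$, $\|\bu_0\|_{L^1(\Gamma)}$ and $\|\bu_0\|_{L^p(\Gamma)}$.

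For the quadratic form, applying the same substitution in both variables and using $(\bu_\lambda(t,x)-\bu_\lambda(t,y))^2=\lambda^2(\bu(\lambda^2 t,\lambda x)-\bu(\lambda^2 t,\lambda y))^2$ together with the Jacobian $\lambda^{-2}$ of the double change of variables, I would obtain
\[
\lambda^3 \int_{\Gamma^\lambda}\int_{\Gamma^\lambda} J(\lambda d(x,y))(\bu_\lambda(t,x)-\bu_\lambda(t,y))^2\,dx\,dy
=\lambda^3\,\mathcal{E}^J_\Gamma\big(\bu(\lambda^2 t),\bu(\lambda^2 t)\big),
\]
and then \eqref{grad.estimate.2} bounds the right-hand side by $\lambda^3\,C(\lambda^2 t)^{-3/2}=C\,t^{-3/2}$, proving \eqref{nest.energy.lambda} uniformly in $\lambda$.

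There is no genuine difficulty here; the only thing that requires attention is the bookkeeping of the powers of $\lambda$ coming from the three sources — the explicit factor $\lambda$ in $\bu_\lambda=\lambda\bu(\lambda^2 t,\lambda x)$, the Jacobian of the change of variables ($\lambda^{-1}$ for a single integral, $\lambda^{-2}$ for the double one), and the $\lambda^3$ in front of the rescaled nonlocal operator in \eqref{eq.non.lambda} — and checking that they cancel so as to leave a $\lambda$-independent right-hand side, exactly as in the local case treated in Lemma \ref{estimates.ulambda}.
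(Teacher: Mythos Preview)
Your proposal is correct and is exactly the approach the paper takes: the paper simply states, just before Lemma \ref{uniform.nonocal.est}, that the estimates follow from Theorem \ref{th.nonlocal.decay.1} by rescaling, without writing out the details. Your computation of the powers of $\lambda$ is accurate in both parts.
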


Before proving Theorem \ref{th.nonlocal.1.intro} we need some auxiliary results.
For a function $\varphi\in H^1(\Gamma_\infty)$ such that
$\varphi^e(0)=\varphi^{e'}(0)$ for all $e,e'\in \Gamma_\infty$, denote this common value by $\varphi(0)$.
We extend it to function defined on the whole graph $\Gamma^\lambda$,
$\tilde \varphi_\lambda:  \Gamma^\lambda\rightarrow\rr$, such  that to be constant on the finite part of the graph,
i.e.
\[
\tilde \varphi_\lambda=
\begin{cases}
	\varphi, & \text{on}\ \Gamma_\infty,\\
	\varphi(0), & \text{on}\ \Gamma_f^\lambda.
\end{cases}
\]

\begin{lemma}
	\label{termino.cruzado}
	There exists a non-increasing function $H\in L^\infty([0,\infty))$ going to zero at infinity such that
	for any $\varphi\in H^1(\Gamma_\infty))$  such that
$\varphi^e(0)=\varphi^{e'}(0)$ for all $e,e'\in \Gamma_\infty$ we have
\begin{equation}
\label{est.extension.1}
  \lambda^3 \int_{\Gamma_\infty} \int_{\Gamma_f^\lambda} J(\lambda d(x,y))(\tilde \varphi(x)-\tilde\varphi(y) )^2dydx\leq \int_{\Gamma_\infty} \varphi^2_x(x)H(\lambda |x|)dx, \quad \forall\lambda>0.
\end{equation}
\end{lemma}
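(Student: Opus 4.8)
The plan is to estimate the double integral edge by edge on $\Gamma_\infty$, exploiting that on $\Gamma_f^\lambda$ the extension $\tilde\varphi$ is the constant $\varphi(0)$ and that, by continuity of $\varphi$ at the node of $\Gamma_\infty$, this constant is the boundary value of each component $\varphi^e$. So I would fix an infinite edge $e$, parametrized by $[0,\infty)$ with the node at $0$, and observe that for $x\in e$ and $y\in\Gamma_f^\lambda$ one has $\tilde\varphi(x)-\tilde\varphi(y)=\varphi^e(x)-\varphi(0)$. Since every path from $x$ to a point of the finite part must first run along $e$ down to the node, $d(x,y)\ge |x|$, and because $J$ is non-increasing on $(0,\infty)$ this gives $J(\lambda d(x,y))\le J(\lambda|x|)$. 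Bounding the $y$-integral by the total length $|\Gamma_f^\lambda|=|\Gamma_f|/\lambda$ (where $|\Gamma_f|:=\sum_{l_e<\infty}l_e$) turns the inner integral into $\lambda^{2}|\Gamma_f|\,J(\lambda|x|)\,(\varphi^e(x)-\varphi(0))^2$.

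Next I would use the elementary bound $(\varphi^e(x)-\varphi(0))^2\le |x|\int_0^{|x|}(\varphi^e_x(s))^2\,ds$, obtained from the fundamental theorem of calculus and Cauchy--Schwarz. Inserting it, integrating over $x\in[0,\infty)$, interchanging the $x$ and $s$ integrations (all integrands are nonnegative, so Tonelli applies), and carrying out the substitution $z=\lambda x$, which gives $\int_s^\infty J(\lambda x)\,x\,dx=\lambda^{-2}\int_{\lambda s}^\infty J(z)\,z\,dz$, the two surplus powers of $\lambda$ cancel and the contribution of the edge $e$ becomes $\int_0^\infty(\varphi^e_x(s))^2\,H(\lambda s)\,ds$ with
\[
H(r):=|\Gamma_f|\int_r^\infty J(z)\,z\,dz .
\]
Summing over the finitely many infinite edges then yields precisely \eqref{est.extension.1}.

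It remains to check that $H$ has the asserted properties. It is nonnegative and non-increasing by construction; it is bounded since $\int_0^1 J(z)\,z\,dz\le \|J\|_{L^1(\rr)}$ and $\int_1^\infty J(z)\,z\,dz\le \|J\|_{L^1(\rr,|x|^2)}$, so that $H(0)=|\Gamma_f|\int_0^\infty J(z)\,z\,dz<\infty$ and $H\in L^\infty([0,\infty))$; and $H(r)\to 0$ as $r\to\infty$ by dominated convergence. I do not anticipate a genuine obstacle, as the argument is entirely elementary. The only points deserving a little care are the geometric inequality $d(x,y)\ge|x|$, which rests on the assumption that each infinite edge is a ray joined to $\Gamma$ through a single vertex; the bookkeeping of the scaling, where the factor $\lambda^3\cdot|\Gamma_f^\lambda|=\lambda^2|\Gamma_f|$ has to be matched against the $\lambda^{-2}$ coming out of the change of variables; and the remark that the standing hypotheses $J\in L^1(\rr)\cap L^1(\rr,|x|^2)$ are exactly what guarantees $z\mapsto J(z)\,z\in L^1(0,\infty)$, hence that $H$ is well defined and vanishes at infinity.
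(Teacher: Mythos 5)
Your proposal is correct and follows essentially the same route as the paper's proof: constancy of $\tilde\varphi$ on $\Gamma_f^\lambda$, the geometric bound $d(x,y)\ge |x|$ combined with monotonicity of $J$ and $|\Gamma_f^\lambda|=|\Gamma_f|/\lambda$, the bound $(\varphi^e(x)-\varphi(0))^2\le |x|\int_0^{|x|}(\varphi^e_x)^2$, Tonelli plus the change of variables $z=\lambda x$, and $H(r)$ defined via $\int_r^\infty zJ(z)\,dz$ (you simply absorb the constant $|\Gamma_f|$ into $H$, where the paper keeps it as $C(\Gamma_f)$). The verification of the properties of $H$ from $J\in L^1(\rr)\cap L^1(\rr,|x|^2)$ is also as in the paper, only spelled out in more detail.
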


\begin{proof}
	By the way we define the extension function $\tilde \varphi$ we have that it is constant on $\Gamma_f^\lambda$. Then
	\[
	 \lambda^3 \int_{\Gamma_\infty} \int_{\Gamma_f^\lambda} J(\lambda d(x,y))(\tilde \varphi(x)-\tilde\varphi(y) )^2dydx = \lambda^3 \int_{\Gamma_\infty}(\varphi(x)-\varphi(0))^2\Big(\int_{\Gamma_f^\lambda}J(\lambda d(x,y))dy \Big)dx.
	\]
	Take an edge $e$ of $\Gamma_\infty$ parametrized by $[0,\infty)$. For any $x\in e$ and $y\in {\Gamma_f^\lambda} $ we have $d(x,y)\geq |x|$. Since $J$ is a non-increasing function we obtain
	\[
	\int_{\Gamma_f^\lambda}J(\lambda d(x,y))dy\leq |{\Gamma_f^\lambda}| J(\lambda |x|)=C(\Gamma_f)\lambda^{-1}J(\lambda |x|).
	\]
	It is then sufficient to consider integrals of the type
	\[
	I=\lambda^2 \int_0^\infty (\varphi(x)-\varphi(0))^2 J(\lambda |x|) dx.
	\]
	Using that $(\varphi(x)-\varphi(0))^2\leq x \int_0^x\varphi_x^2(s)ds$ we obtain that
	\[
	I\leq   \int_0^\infty\varphi_x^2(x) \int_{\lambda x}^\infty zJ(z)dz.
	\]
	Denoting $H(s)=\int_s^\infty zJ(z)dz$ and using that $J\in L^1(\rr,1+|x|^2)$ we obtain the desired result.
\end{proof}

%For a function $\varphi\in H^1(\Gamma_\infty))$ such that
%$\varphi^e(0)=\varphi^{e'}(0)$ for all $e,e'\in \Gamma_\infty$, denote this common value by $\varphi(0)$.
\begin{lemma}
	For any $\varphi\in H^1(\Gamma_\infty)$  such that
$\varphi^e(0)=\varphi^{e'}(0)$ for all $e,e'\in \Gamma_\infty$ the following hold:\\
i)\[
\mathcal{E}^{J,\lambda}_{\Gamma_\infty}(\varphi,\varphi):=
 \lambda^3 \int_{\Gamma_\infty}\int_{\Gamma_\infty}
J(\lambda d(x,y))( \varphi(x)- \varphi(y) )^2dydx\leq C(J)\|\varphi_x\|^2_{L^2(\Gamma_\infty)},
\] uniformly in $\lambda>0$.

ii)\[
\mathcal{E}^{J,\lambda}_{\Gamma^\lambda}(\tilde\varphi,\tilde\varphi):=  \lambda^3 \int_{\Gamma^\lambda}\int_{\Gamma^\lambda} J(\lambda d(x,y))( \tilde\varphi(x)- \tilde\varphi(y) )^2dydx= \mathcal{E}^{J,\lambda}_{\Gamma_\infty}(\varphi,\varphi)+o(1), \quad \lambda\rightarrow\infty.
\]
\end{lemma}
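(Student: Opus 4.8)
The plan is to establish part (i) by separating the double integral over $\Gamma_\infty\times\Gamma_\infty$ into the contributions in which the two variables lie on the \emph{same} infinite edge and those in which they lie on two \emph{distinct} infinite edges, and to deduce part (ii) by splitting the double integral over $\Gamma^\lambda\times\Gamma^\lambda$ according to whether each variable belongs to $\Gamma_\infty$ or to the finite part $\Gamma_f^\lambda$. Since $\Gamma$ has finitely many edges, in both cases only a bounded number of pieces appears, so the edge count may be absorbed into $C(J)$ as in the statement.

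For part (i), on a fixed infinite edge $e$ parametrized by $[0,\infty)$ we have $d(x,y)=|x-y|$, so the same-edge term is $\lambda^3\int_0^\infty\int_0^\infty J(\lambda|x-y|)(\varphi^e(x)-\varphi^e(y))^2\,dx\,dy$. I would substitute $y=x+z$, use Cauchy--Schwarz in the form $(\varphi^e(x)-\varphi^e(x+z))^2\le|z|\,\bigl|\int_0^z\varphi^e_x(x+t)^2\,dt\bigr|$, integrate first in $x$ via Fubini (bounding $\int_0^\infty\varphi^e_x(x+t)^2\,dx\le\|\varphi^e_x\|_{L^2(0,\infty)}^2$), and finally perform the change of variables $w=\lambda z$; this leaves the bound $\bigl(\int_{\rr}J(|w|)w^2\,dw\bigr)\|\varphi^e_x\|_{L^2(0,\infty)}^2$, which is finite because $J$ has finite second moment. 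For two distinct infinite edges $e,e'$, each parametrized by $[0,\infty)$, we have $d(x,y)=x+y$; here I would first use the continuity of $\varphi$ at the single node of $\Gamma_\infty$, writing $\varphi^e(0)=\varphi^{e'}(0)=:\varphi(0)$ and $(\varphi^e(x)-\varphi^{e'}(y))^2\le 2(\varphi^e(x)-\varphi(0))^2+2(\varphi^{e'}(y)-\varphi(0))^2$, and then repeat verbatim the computation in the proof of Lemma~\ref{termino.cruzado}: integrating the kernel in one variable produces the factor $\lambda^2\int_{\lambda x}^\infty J$, and combining $(\varphi^e(x)-\varphi(0))^2\le x\int_0^x\varphi^e_x(s)^2\,ds$ with two applications of Fubini leaves $\tfrac12\bigl(\int_0^\infty u^2J(u)\,du\bigr)\|\varphi^e_x\|_{L^2(0,\infty)}^2$. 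Summing over the edges and over the ordered pairs of distinct edges gives part (i).

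For part (ii), write $\Gamma^\lambda=\Gamma_\infty\cup\Gamma_f^\lambda$ and split $\mathcal{E}^{J,\lambda}_{\Gamma^\lambda}(\tilde\varphi,\tilde\varphi)$ into four pieces. The piece with both variables in $\Gamma_\infty$ is exactly $\mathcal{E}^{J,\lambda}_{\Gamma_\infty}(\varphi,\varphi)$ since $\tilde\varphi=\varphi$ there; the piece with both variables in $\Gamma_f^\lambda$ vanishes since $\tilde\varphi$ is constant on $\Gamma_f^\lambda$; and the two mixed pieces coincide by symmetry and are controlled, by Lemma~\ref{termino.cruzado}, by $2\int_{\Gamma_\infty}\varphi_x^2(x)\,H(\lambda|x|)\,dx$. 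Since $H\in L^\infty([0,\infty))$ is non-increasing and tends to $0$ at infinity, for every $x\neq0$ we have $H(\lambda|x|)\to0$ as $\lambda\to\infty$, while the integrand is dominated by $\|H\|_{L^\infty}\,\varphi_x^2\in L^1(\Gamma_\infty)$; dominated convergence then shows that the mixed pieces are $o(1)$, which is precisely the asserted identity.

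The only genuinely delicate point is the off-diagonal estimate in part (i): one must exploit both the matching of $\varphi$ at the node and the finiteness of the second moment of $J$ in order to turn the ``broken-line'' distance $d(x,y)=x+y$ between two rays into a bound by $\|\varphi_x\|_{L^2(\Gamma_\infty)}^2$. Since essentially this computation --- with one ray replaced by the finite part of the graph --- is already carried out in Lemma~\ref{termino.cruzado}, the remaining work is bookkeeping of the pieces, and part (ii) follows immediately from Lemma~\ref{termino.cruzado} together with dominated convergence.
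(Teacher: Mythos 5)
Your proof is correct, and part (ii) coincides with the paper's argument: the same four-way splitting, with the $\Gamma_f^\lambda\times\Gamma_f^\lambda$ block vanishing because $\tilde\varphi$ is constant there, the mixed blocks controlled by Lemma \ref{termino.cruzado}, and dominated convergence giving the $o(1)$. For part (i), however, you take a genuinely different (more self-contained) route. The paper treats each pair of infinite edges $e,e'$ at once: it parametrizes them by $(-\infty,0]$ and $[0,\infty)$, observes that $d(x,y)\geq |x-y|$ so that monotonicity of $J$ reduces the kernel to $J(\lambda|x-y|)$, uses the node condition $\varphi^e(0)=\varphi^{e'}(0)$ to glue the two traces into a single $H^1(\rr)$ function, and then quotes the known real-line estimate $\lambda^3\int_{\rr}\int_{\rr}J(\lambda|x-y|)(\varphi(x)-\varphi(y))^2\,dx\,dy\leq \int_{\rr}J(z)z^2\,dz\,\|\varphi_x\|_{L^2(\rr)}^2$ from \cite{IgnatIgnat}; summing over pairs finishes the proof (the single-edge case is handled by extension to $\rr$). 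You instead prove the diagonal (same-edge) bound directly on the half-line by Cauchy--Schwarz and Fubini, and handle the off-diagonal terms by splitting through the common node value $\varphi(0)$ and redoing a Lemma \ref{termino.cruzado}-type computation with the factor $\lambda^2\int_{\lambda x}^\infty J$; both steps are correct (I checked the Fubini bookkeeping, which indeed yields $\tfrac12\int u^2J(u)\,du$ times $\|\varphi_x^e\|_{L^2}^2$ up to harmless constants). What the paper's gluing trick buys is brevity, since the whole estimate is outsourced to the one-dimensional lemma; what your argument buys is independence from that external result and a transparent accounting of exactly where the node continuity and the second moment of $J$ enter.

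One small imprecision: for two distinct infinite edges you assert $d(x,y)=x+y$, which is exact only when the two rays are attached to the same vertex of $\Gamma_f$; in general $d(x,y)=x+\delta+y$ with $\delta>0$ the distance between the attaching vertices. This is harmless for your purposes, since $d(x,y)\geq x+y$ always holds and $J$ is non-increasing, so $J(\lambda d(x,y))\leq J(\lambda(x+y))$ and your estimate goes through verbatim --- but the monotonicity of $J$ should be invoked explicitly at that point, exactly as the paper does when it writes $d(x,y)\geq|x-y|$.
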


\begin{proof}

	Using the fact that the extension function $\tilde\varphi$ is constant on $\Gamma_f^\lambda$ we obtain
	\[
	\mathcal{E}^{J,\lambda}_{\Gamma^\lambda}(\tilde\varphi,\tilde\varphi)=\mathcal{E}^{J,\lambda}_{\Gamma_\infty}(\varphi,\varphi)+ 2\lambda^3 \int_{\Gamma_\infty} \int_{\Gamma_f^\lambda} J(\lambda d(x,y))(\tilde \varphi(x)-\tilde\varphi(y) )^2dydx
	\]
	Using estimate \eqref{est.extension.1} and the fact that $\varphi\in H^1(\Gamma_\infty)$ by Lebesgue convergence theorem we obtain that the last term is $o(1)$.

Let us now prove the first part.
When $\Gamma_\infty$ consists only of one edge parametrized by $(0,\infty)$ we extend it to the whole line and use the previous results on the real line \cite[Lemma~2.3]{IgnatIgnat}.

	Let us consider two edges $e$ and $e'$ of $\Gamma_\infty$. It is sufficient to estimate each $I_{e,e'}$ defined by
	\[
	I_{e,e'}=\lambda^3 \int_{e\cup e'}\int_{e\cup e'} J( \lambda d(x,y))( \varphi(x)- \varphi(y) )^2dydx.
	\]
	Assume that the two edges are parametrized by $(-\infty,0]$ and $[0,\infty)$. Then $d(x,y)\geq |y-x|$ and using that $J$ is non-increasing we get (the continuity at $x=0$ guarantees that the function $\varphi$ belongs to $H^1(\rr)$ and we can apply then the result on the real line)
	\[
	I_{e,e'}\leq \lambda^3 \int_{\rr}\int_{\rr} J( \lambda |x-y|)( \varphi(x)- \varphi(y) )^2dydx\leq \int_{\rr}J(z)z^2dz \int_{\rr}\varphi_x^2dx.
	\]
Summing
	this inequality over all the edges $e$ and $e'$ of $\Gamma_\infty$ we obtain the desired estimate.
\end{proof}

Now we are ready to proceed with the proof of the asymptotic behavior of the solutions.
Notice that the obtained bound for the decay in $L^p$ obtained in Theorem \ref{th.nonlocal.decay.1} is the same that holds for the heat equation in $\Gamma$.
Therefore, our next task is to show that we also have the same asymptotic profile.

\begin{proof}[Proof of Theorem \ref{th.nonlocal.1.intro}]
We consider the case $p\geq 2$ in order to avoid technical difficulties in order to obtain compactness.
In particular $\bu_0\in L^1(\Gamma)\cap L^2(\Gamma)$ and we can obtain the compactness of the rescaled family $\bu_\lambda$ using the energy estimate \eqref{nest.energy.lambda}
 When
$1<p<2$ we have to use the compactness arguments in \cite{IgnatIgnat} and to obtain new versions of the Lemmas in Section \ref{sect-Appendix}.

\textbf{Step A. Existence of a limit profile $\bU\in L^2_{loc}((0,\infty), D(\mathcal{Q}_{\Gamma_\infty}))$.}
Let us fix two times, $\tau$, $T$, such that $0<\tau<T<\infty$.
Using the results in Lemma \ref{uniform.nonocal.est}  and Lemma \ref{convergence.h1.time} we obtain that  $\bU=(U^e)_{e\in \Gamma_\infty}\in L^2((\tau, T), H^1(\Gamma_\infty))$
and $\bu_\lambda\rightarrow \bU$ in $L^2((\tau, T), L^2_{loc}(\Gamma_\infty))$.
Indeed using \eqref{nest.energy.lambda} for any edge $e$ of $\Gamma_\infty$
we obtain by Lemma \ref{convergence.h1.time} that  $u^e\in L^2((\tau,T),H^1(e))$,  $u^e_\lambda(t)\rightharpoonup U^e(t)$ in $L^2(e)$ and  $u^e_\lambda(t)\rightarrow U^e(t)$ in $L^2((\tau, T), L^2_{loc}(e))$.

We  prove that in fact
the values of $\bU$ at the endpoints of $e_j$ are equal, that is,
$$U^e(t,j(I(e),e))=U^{e'}(t,j(I(e'),e')),$$
for a.e. $t$ and then $\bU\in L^2((\tau, T), D(\mathcal{Q}_{\Gamma_\infty}))$.
To do that let us consider two edges, $e$ and $e'$, of $\Gamma_\infty$ and $\Upsilon^\lambda$ a path in $\Gamma^\lambda$ which contains the two edges. Since the graph $\Gamma_f$ is finite there exits $\alpha>0$ such that the length of the
path $\Upsilon_f^\lambda=\Gamma^\lambda\cap \Upsilon^\lambda$ is $2\alpha/\lambda$. Let us parametrize $e$ and $e'$ with $(-\infty,0)$ respectively $(0,\infty)$, $ \Upsilon_f^\lambda$ with $(-\alpha/\lambda, \alpha/\lambda)$. In the following we will not meke precise the time dependence of $\bu_\lambda$ unless it is necesarely.
We introduce the family $(f_\lambda)_{\lambda>0}$ defined by
\[
f_\lambda(x)=
\begin{cases}
u_\lambda^e(x+\alpha/\lambda),& x<-\alpha/\lambda,\\
u_\lambda|_{ \Upsilon_f^\lambda}	(x), & |x|<\alpha/\lambda,\\
u_\lambda^{e'}(x-\alpha/\lambda),& x>\alpha/\lambda.
\end{cases}
\]
In view of the properties of $\bu_\lambda(t)$ we obtain that $(f_\lambda)_{\lambda>0}$  is uniformly bounded in $L^2(\rr)$ and it safisfies
\[
\lambda^3 \int_{\rr}\int_{\rr} J(\lambda(x-y))(f_\lambda(x)-f_\lambda(y))^2dxdy\leq C(t).
\]
It follows that there exists $f\in H^1(\rr)$ such that $f_\lambda\rightarrow f$ in $L^2_{loc}(\rr)$ and $f_\lambda \rightharpoonup   f $ in $L^2(\rr)$. In particular $f\in C(\rr)$ and satisfies $|f(b)-f(a)|\lesssim |b-a|^{1/2}$ for all $a<0<b$. We claim that
\[
f(x)=
\begin{cases}
	u^e(x), &x<0,\\
	u^{e'}(x), & x>0.
\end{cases}
\]
This implies that for any $a<0<b$, $|u^e(a)-u^{e'}(b)|\lesssim |b-a|^{1/2}$ and then $u^e(0-)=u^{e'}(0+)$ which proves the continuity of the limit profile $\bU$. It remains to prove the above claim. Let us consider $a<0$, $\varphi$ a smooth function supported in $(-\infty, 0)$ and $\lambda>0$ such that $a<-\alpha/\lambda<0$. Then
\[
\int_{-\infty}^af_\lambda\varphi=\int_{-\infty}^a u^e_\lambda(x+\frac \alpha\lambda)\varphi(x)dx=\int_{-\infty}^0
u^e_\lambda(y) \varphi(y-\frac \alpha\lambda)\chi_{(-\infty,a+\frac \alpha\lambda)}.dy
\]
Using that $f_\lambda\rightharpoonup f$ and $u_\lambda^e\rightharpoonup u^e$ in $L^2((-\infty, 0))$ we let $\lambda\rightarrow \infty$ to obtain that $f=u^e$ on $(-\infty,a)$. Since $a$ is arbitrary we obtain that $f\equiv u_e$ on $(-\infty,0)$. The same argument shows that $f\equiv u_{e'}$ on $(0,\infty)$ and the claim is proved.

\textbf{Step B. Equation satisfied by the limit profile.}
Let us fix $T>0$.
Let us now consider a function $\varphi\in C([0,\infty),H^1(\Gamma_\infty))$ with $\varphi_t\in C([0,\infty),L^2(\Gamma_\infty))$ compactly supported in time in the interval $[0,T]$ and
such that
$\varphi^e(t,0)=\varphi^{e'}(t,0)$ for all $e,e'\in \Gamma_\infty$, i.e. $\varphi\in C([0,\infty),D(\mathcal{Q}_{\Gamma_\infty}))$. Denote this common value by $\varphi(t,0)$.
We extend it to function defined on the whole graph $\Gamma^\lambda$,
$\tilde \varphi_\lambda:[0,\infty)\times \Gamma^\lambda$, function that is constant on the finite part of the graph,
i.e.
\[
\tilde \varphi_\lambda=
\begin{cases}
	\varphi, & \text{on}\ \Gamma_\infty,\\
	\varphi(t,0), & \text{on}\ \Gamma_f^\lambda.
\end{cases}
\]

Multiplying the equation satisfied by $\bu_\lambda$ with $\varphi$ and integrating in time and space we obtain
\begin{align*}
  0= & \int_0^T\int_{\Gamma^\lambda} \bu_\lambda  \varphi_t(t,x)dxdt+ \int_{\Gamma^\lambda} \bu_\lambda(0,x)
  \varphi_\lambda (0,x)dxdt\\
  &\quad +\lambda^3 \int_0^T \int_{\Gamma^\lambda} \int_{\Gamma^\lambda}
  J(\lambda d(x,y))(\bu_\lambda(t,y)-\bu_\lambda(t,x)) \varphi(t,x) dydxdt
  \\
  &=I_1^\lambda+I_2^\lambda -\frac 12 \int_0^T  \mathcal{E}_{\Gamma^\lambda}^{J,\lambda}(\bu_\lambda(t), \varphi(t))dt:=I_1^\lambda+I_2^\lambda+I_3^\lambda
    \end{align*}
  where
  \[
  \mathcal{E}^{J,\lambda}_{\Gamma^\lambda}(\bu_\lambda(t), \varphi)
  ={\lambda^3} \int_{\Gamma^\lambda} \int_{\Gamma^\lambda}
  J(\lambda d(x,y))(\bu_\lambda(t,y)-\bu_\lambda(t,x))(\varphi(t,y)- \varphi(t,x))dxdy.
  \]
We claim that the following hold where
$\bU\in L^2_{lov}((0,\infty),D(\mathcal{Q}_{\Gamma_\infty}))$  is the a limit of $\bu_\lambda$ above
\begin{equation}
\label{conv.i.1}
  I_1^\lambda\rightarrow \int_0^T\int_{\Gamma_\infty} \bU \varphi_t,
\end{equation}
\begin{equation}
\label{conv.i.2}
  I_2^\lambda \rightarrow \varphi(0,0)  M,
\end{equation}
\begin{equation}
\label{conv.i.3}
  I_3^\lambda\rightarrow -\frac 12\int_{\rr}J(z)z^2dz \int_0^T \int_{\Gamma_\infty} \bU_x \varphi_{x} = -\int_0^T \int_{\Gamma_\infty} \bU_x \varphi_{x}.
  \end{equation}
These convergences show that the limit function $\bU\in L^2_{loc}((0,\infty),D(\mathcal{Q}_{\Gamma_\infty}))$ is a solution to
$$
0= \int_0^\infty \int_{\Gamma_\infty} \bU \varphi_t + \varphi(0,0)  M - \int_0^\infty \int_{\Gamma_\infty} \bU_x \varphi_{x}.
$$

Since $\bU(t)\in D(\mathcal{Q}_{\Gamma_\infty})$ for a.e. $t>0$ we have for any $\varphi\in D(\Delta_{\Gamma_\infty})$ that
$$
(\bU_x,\varphi_x)_{L^2(\Gamma_\infty)}=(\bU,\varphi_{xx})_{L^2(\Gamma_\infty)}.
$$
Thus
for  $\varphi\in C([0,\infty), D(\Delta_{\Gamma_\infty}))\cap C^1([0,\infty),L^2(\Gamma_\infty))$ we obtain that the limit point $\bU$ satisfies
\[
0= \int_0^\infty \int_{\Gamma_\infty} \bU (t,x) ( \varphi_t (t,x) + \varphi_{xx} (t,x) )dx dt+ \varphi(0,0)  M,
\]
hence it is a solution to the heat equation in $\Gamma$ with initial condition $M\delta_{x=0}$ and therefore the asymptotic
profile claimed in Theorem \ref{th.nonlocal.1.intro}
follows from our results for the local case.

Therefore, we  have to show \eqref{conv.i.1}, \eqref{conv.i.2} and \eqref{conv.i.3}.
As in the local case we have that $\bU$ is uniformly bounded in $L^2((0,T),L^2(\Gamma_\infty))$ so
\[
\int_0^\infty\int_{\Gamma_\infty} \bu_\lambda \widetilde \varphi_t \rightarrow \int_0^\infty\int_{\Gamma_\infty} \bU  \varphi_t.
\]
Moreover, since $\varphi_t(t,0)$ has compact support in time
\[
 \int_0^\infty\int_{\Gamma^\lambda_f} |\bu_\lambda \widetilde \varphi_t| dxdt\leq
 \int_{0}^\infty \|u_\lambda(t)\|_{L^2(\Gamma_f^\lambda) }|\varphi_t(t,0) | |\Gamma_f^\lambda|^{1/2}\lesssim \lambda^{-\frac{1}{2}} \int_0^\infty t^{-1/4} |\varphi_t(t,0)|dt \rightarrow 0.
\]
This shows \eqref{conv.i.1}.

For the second estimate we use the mass conservation an the fact that $\tilde \varphi_\lambda$ is constant on $\Gamma_f^\lambda$:
\begin{align*}
\label{}
  I^2_\lambda-M\varphi(0,0)&=\int_{\Gamma^\lambda} u^\lambda(0,x)(\tilde \varphi_\lambda(0,x)-\varphi(0,0))dx=\int_{\Gamma_\infty} u^\lambda(0,x)( \varphi (0,x)-\varphi(0,0))dx\\
  &=\int_{\Gamma_\infty} u(0,x)( \varphi (0,x/\lambda)-\varphi(0,0))dx\rightarrow 0.
\end{align*}

%
%
%The second estimate, \eqref{conv.i.2},
%$$
%\int_{\Gamma^\lambda} \bu^\lambda(0,x)\widetilde\varphi_\lambda (0,x)dx
%\rightarrow  \varphi(0,0)  M,
%$$
%follows as in the local case, see the case of $I_4^\lambda$ in the previous section.

Let us analyze the last term $I_3^\lambda$. We will prove the desired limit in few steps.

\textit{Step 1.} We prove that
\[
\int_0^T \mathcal{E}^{J,\lambda}_{\Gamma^\lambda}(\bu_\lambda(t),\tilde\varphi(t))dt=
\int_0^T \mathcal{E}^{J,\lambda}_{\Gamma_\infty}(\bu_\lambda(t), \varphi(t))dt+o(1).
\]
Indeed, since $\tilde \varphi$ is constant in $\Gamma_f^\lambda$ we have
\begin{align*}
\label{}
|  \mathcal{E}^{J,\lambda}_{\Gamma^\lambda}(\bu_\lambda(t),\tilde\varphi(t))&-\mathcal{E}^{J,\lambda}_{\Gamma_\infty}(\bu_\lambda(t), \varphi(t))|\\
  &\leq 2\lambda^3\int_{\Gamma_\infty} \int_{\Gamma_f^\lambda} J(\lambda d(x,y))|\bu_\lambda(t,x)-\bu_\lambda(t,y)||\tilde \varphi(t,x)-\tilde\varphi(t,y) |dydx\\
  &\leq 2  (\mathcal{E}^{J,\lambda}_{\Gamma^\lambda}(\bu_\lambda(t),(\bu_\lambda(t)))^{1/2}\Big( \lambda^3 \int_{\Gamma_\infty} \int_{\Gamma_f^\lambda} J(\lambda d(x,y))(\tilde \varphi(x)-\tilde\varphi(y) )^2dydx\Big)^{\frac{1}2} \\
  &\lesssim  t^{-\frac{3}{4}} \Big(\int _{\Gamma_\infty} \varphi_x^2(t,x)H(\lambda x)dx  \Big)^{\frac{1}2}.
\end{align*}
Integrating in time the above inequality we obtain that
\[
\int _0^T |  \mathcal{E}^{J,\lambda}_{\Gamma^\lambda}(\bu_\lambda(t),\tilde\varphi(t)) -\mathcal{E}^{J,\lambda}_{\Gamma_\infty}(\bu_\lambda(t), \varphi(t))|dt\lesssim
T^{\frac{1}4}\sup_{t\in [0,T]}  \Big(\int _{\Gamma_\infty} \varphi_x^2(t,x)H(\lambda x)dx  \Big)^{\frac{1}2}.
\]
Using that $ \varphi\in C([0,\infty),H^1(\Gamma_\infty))$ we can apply the dominated convergen theorem to  obtain that the last goes to zero as $\lambda\rightarrow\infty.$

\textit{Step 2.} For any $0<\tau<T$ the integrals over $(0,\tau)$ are small, uniformly in $\lambda$: it holds that
\[
\int_0^\tau |\mathcal{E}^{J,\lambda}_{\Gamma_\infty}(\bu_\lambda(t), \varphi(t))|dt \lesssim \tau^{\frac{1}4} \|\varphi\|_{L^\infty([0,T],H^1(\Gamma_\infty))},
\]
and
\[
\Big|\int_0^\tau \int_{\Gamma_\infty} \bU_x(t,x)\varphi_x(t,x)dxdt\Big| \leq \tau^{\frac{1}4} \|\varphi\|_{L^\infty([0,T],H^1(\Gamma_\infty))}.
\]

To check the first one notice that, in view of \eqref{nest.energy.lambda}, we have
\[
|\mathcal{E}^{J,\lambda}_{\Gamma_\infty}(\bu_\lambda(t), \varphi(t))| \leq \mathcal{E}^{J,\lambda}_{\Gamma_\infty}(\bu_\lambda(t), \bu_\lambda(t))^{\frac{1}2} \mathcal{E}^{J,\lambda}_{\Gamma_\infty}(\varphi(t), \varphi(t))^{\frac{1}2}
\lesssim t^{-\frac{3}4} \|\varphi_x(t)\|_{L^2(\Gamma_\infty)}.
\]
Integrating in $[0,\tau]$ we obtain the desired estimate.

For the second limit
remark that for each $t>0$, we have
\[
\mathcal{E}^\lambda_{\Gamma_\infty}(\bu_\lambda(t), \varphi(t))\rightarrow \int_{\Gamma_\infty} \bU_x(t,x)\varphi_x(t,x)dx.
\]
Also $|\mathcal{E}^\lambda_{\Gamma_\infty}(\bu_\lambda(t), \varphi(t))|\leq t^{-\frac{3}4} \|\varphi\|_{L^\infty([0,T],H^1(\Gamma_\infty))}\in L^1((0,T))$. The dominated convergence theorem applied on the time interval $(0,\tau)$  gives us the desired result.

\textit{Step 3.} Let us choose $0<\tau<T$. On the interval $[\tau,T]$ we apply the third part of Lemma \ref{convergence.h1.time} to $D=(0,\infty)$ to obtain
 that
\[
 \int_\tau^T \mathcal{E}^\lambda_{\Gamma_\infty}(\bu_\lambda(t), \varphi(t))dt \rightarrow
\int_{\rr} J(z)z^2dz\int_\tau^T \int_{\Gamma_\infty} U_x(t,x)\varphi_x(t,x)dxdt.
%= \int_\tau^T \int_{\Gamma_\infty} U_x(t,x)\varphi_x(t,x)dxdt.
\]
Thus in view of Step II we obtain  \eqref{conv.i.3}.

\textbf{Step C. Tail control and conclusion}. Using the arguments for nonlocal problems in \cite[Lemma 2.7]{IgnatIgnat} together with the ones in Lemma \ref{estimates.ulambda} to control the tail \eqref{tail.control}
we obtain similar results for the solutions $\bu_\lambda$ of the nonlocal problem.
It means that the local convergence obtained at Step A is not only local but it holds in $L^1(\Gamma_\infty)$:
for some $t_0>0$ it holds
\[
\bu_\lambda(t_0)\rightarrow \bU_M(t_0) \, \text{in}\ L^1(\Gamma_\infty).
\]
Then \eqref{limit.p.intro} holds for $q=1$. The other cases follows by using the strong convergence in $L^1(\Gamma_\infty)$ together with the decay of the solutions in $L^p(\Gamma)$. Indeed, choosing $\alpha$ such that
\[
\frac 1q=\frac{\alpha}p+\frac{1-\alpha}{1}.
\]
we obtain
\begin{align*}
 \|\bu(t)-\bU_M(t)\|_{L^q(\Gamma_\infty)}& \leq  \|\bu(t)-\bU_M(t)\|_{L^1(\Gamma_\infty)}^{1-\alpha}
 \|\bu(t)-\bU_M(t)\|_{L^p(\Gamma_\infty)}^{ \alpha}\\
 &\leq  o(1)t^{-\frac 12(1-\frac 1q)}.
\end{align*}

On the compact part of the graph for any $1\leq q<p$
 we trivially have
\begin{align*}
\|\bu(t)-\bU_M(t)\|_{L^q(\Gamma_f)}&\leq C(p,q,\Gamma_f)\|\bu(t)-\bU_M(t)\|_{L^p(\Gamma_f)}\\
&\leq
C(p,q,\Gamma_f)(\|\bu(t)\|_{L^p(\Gamma)}+\|\bU_M(t)\|_{L^p(\Gamma_f)})\\
&\leq C(p,q,\Gamma_f)(t^{-\frac 12(1-\frac 1p)}+t^{-\frac{1}2})=o(t^{-\frac 12(1-\frac 1q)}), \ t\rightarrow \infty.
\end{align*}
The proof is now complete.
\end{proof}

\section{Appendix} \label{sect-Appendix}

In this Appendix we collect some compactness results that were used when studying the relaxation limit and the asymptotic behaviour
for the nonlocal problem. We will use these results in 1-dimension (take $d=1$ below) but we state them in any dimension since the results
hold with greater generality. For the proof we use ideas from \cite{ellibro}. More general assumptions on the function $\rho$
can be found in \cite{ponce}.

\begin{lemma}
	\label{convergence.h1}Let $D\subset \rr^d$ be an open set that has the extension property in $H^1$, for example,
	$D$ is a bounded $C^1$-domain or $\rr^d_+$. Let $\rho:\rr^d\rightarrow\rr$ be a nonnegative $L^1$ radial function having a second momentum in $L^1(\rr^d)$, $\rho>0$ in a neighbourhood  of $x=0$,  and take $$\rho_n(x)=n^d\rho(nx).$$ Let $f_n$ be a sequence in $L^2(D)$ such that
	\begin{equation}
\label{est.1.6}
  n^2 \int_D \int_D \rho_n(x-y)(f_n(x)-f_n(y))^2dxdy\leq M.
\end{equation}
 1. If $f_n\rightharpoonup f $ in $L^2(D)$ then $f\in H^1(D)$
  and
  \[
  F_n(x,z)= (\rho(z))^{\frac{1}2}\chi_D(x+\frac zn) \frac {f_n(x+\frac zn) -f_n(x)}{\frac 1n}\rightharpoonup (\rho(z))^{\frac{1}2} z\cdot \nabla f(x)
  \]
  weakly in $L_x^2(D)\times L^2_z(\rr^d)$.\\
2. For any $f\in H^1(D)$
\begin{equation}
\label{conv.liminf}
  \int_{\rr^d}\rho(z)|z|^2dz \int _{D}|\nabla f(x)|^2dx\leq \liminf_{n\to \infty}  n^2 \int_D \int_D \rho_n(x-y)(f(x)-f(y))^2dxdy
\end{equation}
3. If $\varphi\in H^1(\rr^d)$ then
  \begin{align}
\label{conv.h1}
n^2   \int_D \int_D \rho_n(x-y)(f_n(x)-f_n(y))&(\varphi(x)-\varphi(y))dxdy\\
&\rightarrow
\nonumber \int_{\rr^d}\rho(z)|z|^2dz \int _{D}\nabla f(x)\nabla \varphi(x)dx.
\end{align}
{Notice that by 1. we have $f\in H^1(D)$.}

4. If $D$ is a smooth bounded domain of $\rr^d$ and $\rho(x)\geq \rho(y)$ if $|x|\leq |y|$ then $\{f_n\}_{n}$ is relatively compact in $L^2(D)$.
\end{lemma}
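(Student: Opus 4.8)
The plan is to prove parts 1 and 3 together by a weak/strong convergence analysis of a suitable difference quotient, to derive part 2 from part 1 by lower semicontinuity, and to treat the compactness statement 4 separately, the latter being where the extension property is really needed. For \textbf{parts 1 and 3}, the starting point is to rewrite the bound \eqref{est.1.6} through the change of variables $y=x+z/n$; since $\rho$ is radial this reads $\|F_n\|_{L^2(D\times\rr^d)}^2\le M$, so up to a subsequence $F_n\rightharpoonup F$ weakly in $L^2_x(D)\times L^2_z(\rr^d)$. To identify $F$ I would test against products $\varphi(x)\psi(z)$ with $\varphi\in C^\infty_c(D)$ and $\psi\in C^\infty_c(\rr^d)$: for $n$ large the cut-off $\chi_D(x+z/n)$ equals $1$ on the supports of $\varphi$ and $\psi$, a change of variables transfers the increment onto $\varphi$, and the uniform Taylor expansion $n\bigl(\varphi(\,\cdot-z/n)-\varphi(\,\cdot)\bigr)\to -z\cdot\nabla\varphi$ combined with $f_n\rightharpoonup f$ in $L^2(D)$ gives $\int_{\rr^d}\int_D F\varphi\psi=-\int_{\rr^d}\int_D \rho(z)^{1/2}\psi(z)\,f(w)\,z\cdot\nabla\varphi(w)\,dw\,dz$. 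Choosing $\psi$ supported where $\rho\ge c>0$ and letting the vector $z$ range over a small ball shows that every directional derivative $z\cdot\nabla f$ belongs to $L^2(D)$, whence $f\in H^1(D)$ and $F(x,z)=\rho(z)^{1/2}\,z\cdot\nabla f(x)$ a.e.\ (the identity being trivial where $\rho(z)=0$); uniqueness of the limit upgrades the convergence to the whole sequence, which is part 1. For part 3 one writes the bilinear quantity as $\int_{\rr^d}\int_D F_nG_n$ with $G_n(x,z)=\rho(z)^{1/2}\chi_D(x+z/n)\,n(\varphi(x+z/n)-\varphi(x))$ and shows $G_n\to\rho(z)^{1/2}z\cdot\nabla\varphi(x)$ \emph{strongly} in $L^2(D\times\rr^d)$ — immediate for $\varphi\in C^2_c$ by Taylor expansion and dominated convergence, then extended to $\varphi\in H^1(\rr^d)$ by density using the uniform estimate $n^2\iint\rho_n(x-y)(g(x)-g(y))^2\,dx\,dy\le C(\rho)\|\nabla g\|_{L^2(\rr^d)}^2$; weak-times-strong convergence then yields the limit $\int_D\int_{\rr^d}\rho(z)(z\cdot\nabla f)(z\cdot\nabla\varphi)$, and the $z$-integration is performed via the radial symmetry of $\rho$ (for $d=1$, the case used here, this directly produces the constant $\int_{\rr}\rho(z)z^2\,dz$).

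For \textbf{part 2}, if the $\liminf$ is infinite there is nothing to prove; otherwise pick a subsequence realising it, along which $n^2\iint\rho_n(x-y)(f(x)-f(y))^2$ stays bounded, and apply part 1 to the constant sequence $f_n\equiv f$. Then $F_n\rightharpoonup\rho(z)^{1/2}z\cdot\nabla f$ weakly in $L^2(D\times\rr^d)$, and weak lower semicontinuity of the norm gives $\|\rho(z)^{1/2}z\cdot\nabla f\|_{L^2(D\times\rr^d)}^2\le\liminf_n\|F_n\|^2$; computing the left-hand side with the same radial symmetry identity produces exactly \eqref{conv.liminf}.

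For \textbf{part 4}, I would first use the extension property to replace $\{f_n\}$ by a sequence $\{\tilde f_n\}\subset L^2(\rr^d)$, bounded in $L^2$, supported in one fixed bounded set, and satisfying a comparable bound $n^2\iint_{\rr^d\times\rr^d}\tilde\rho_n(x-y)(\tilde f_n(x)-\tilde f_n(y))^2\le CM$ with $\tilde\rho$ radial and positive near the origin; when $D$ is a half-space (in particular the half-line, the case used in this paper) reflection across the boundary does this \emph{with the unchanged kernel}, since $\rho$ is radially non-increasing and a reflection cannot increase distances, while for a bounded smooth $D$ one localises and flattens the boundary. The heart of the matter is then a uniform translation estimate: since $\rho$ is radially non-increasing and positive near $0$ there are $\delta_0,c_0>0$ with $\tilde\rho_n\ge c_0n^d$ on the ball of radius $\delta_0/n$, and for $|h|\le\delta_0/(2n)$ the decomposition $\tilde f_n(x+h)-\tilde f_n(x)=(\tilde f_n(x+h)-\tilde f_n(x+z))+(\tilde f_n(x+z)-\tilde f_n(x))$, averaged in $z$ over the ball of radius $\delta_0/(2n)$ with each term bounded by the energy, yields $\|\tilde f_n(\,\cdot+h)-\tilde f_n\|_{L^2(\rr^d)}^2\le C'M/n^2$; a general small $h$ is split into $\approx n|h|/\delta_0$ steps of length $\le\delta_0/(2n)$, so $\|\tilde f_n(\,\cdot+h)-\tilde f_n\|_{L^2(\rr^d)}\le C''|h|\sqrt M$ for $|h|\gtrsim 1/n$, the finitely many remaining small $n$ being dealt with by the individual $L^2$-continuity of translations. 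Boundedness in $L^2$, the fixed compact support, and this uniform equicontinuity of translations give relative compactness of $\{\tilde f_n\}$ in $L^2(\rr^d)$ by the Kolmogorov--Riesz criterion, hence of $\{f_n\}$ in $L^2(D)$. The step I expect to be most delicate is precisely this extension in part 4 — producing extensions whose nonlocal energy stays controlled while the supports remain uniformly bounded; once the difference-quotient reformulation is available, parts 1--3 and the translation estimate reduce to routine weak/strong convergence arguments.
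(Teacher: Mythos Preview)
Your approach is correct and, for Part 3, essentially the paper's: the paper cites references for Parts 1, 2 and 4 and proves only Part 3 in detail, via the same weak-times-strong pairing $(F_n,G_n)$ you describe. The one variation is how the strong convergence $G_n\to\rho^{1/2}z\cdot\nabla\varphi$ is obtained --- you argue by Taylor expansion for $\varphi\in C^2_c$ and then pass to $H^1$ by density using the uniform bound $\|G_n[\varphi]\|_{L^2}\le C(\rho)\|\nabla\varphi\|_{L^2(\rr^d)}$, whereas the paper first splits off the factor $(1-\chi_D(x+z/n))$ and then treats the main piece by Plancherel. Both arguments are short and valid. Your self-contained treatments of Parts 1 and 2 (duality against tensor products to identify the weak limit, then weak lower semicontinuity of the $L^2$-norm) are precisely the standard arguments behind the cited results.

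For Part 4 the routes diverge. The reference the paper invokes proceeds by the mollification decomposition $f_n=\psi_n\ast f_n+(f_n-\psi_n\ast f_n)$ with a smooth $\psi\le C\rho$, so that the first piece is bounded in $H^1$ away from a boundary strip of width $O(1/n)$ and the second tends to zero in $L^2$; this keeps everything inside $D$ and avoids any extension. Your extension + translation-estimate + Kolmogorov--Riesz line is a legitimate alternative, and you are right that the extension is the crux: for the half-line (or half-space) the even reflection controls the nonlocal energy up to a factor $4$ precisely because the kernel is radially non-increasing and reflection never decreases distances --- and this is the only case the paper actually needs. For a general smooth bounded domain the boundary-flattening diffeomorphisms distort the kernel, so one has to check that the lower bound $\rho\ge c_0\chi_{B_{\delta_0}}$ survives the distortion at scale $1/n$; this can be done, but the mollification route sidesteps the issue entirely.
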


\begin{proof}
	The first and fourth part are in \cite[Th.~6.11, p.~128]{ellibro}.  The second part is exactly equation (36) in \cite{MR1942116}.
	
	For the third part we claim that the following strong convergence holds in $L_x^2(D)\times L^2_z(\rr^d)$:
	\begin{equation}
\label{claim.1}
  \eta_n=    (\rho(z))^{\frac{1}2} \chi_D(x+\frac zn) \frac {\varphi(x+\frac zn) -\varphi(x)}{\frac 1n}\rightarrow 
  (\rho (z))^{\frac{1}2}  z\cdot \nabla \varphi(x).
\end{equation}
Then
\[
(F_n,\eta_n)_{L^2_x(D)\times L^2_z(\rr^d)}\rightarrow \int _{D_x}\int_{\rr^d}\rho(z) (z\cdot \nabla f(x))(z\cdot \nabla \varphi(x))=\int _{ \rr^d} \rho(z)|z|^2dz\int _{D}\nabla f\cdot\nabla \varphi.
\]
Observe that after the change of variables $x+z/n=y$ we get
\begin{align*}
\label{}
  (F_n,\eta_n)_{L^2_x(D)\times L^2_z(\rr^d)}&=n^2
  \int _D \int _{\rr^d} \rho(z) \chi _D(x+\frac zn)(f_n(x+\frac zn) -f_n(x))(\varphi(x+\frac zn) -\varphi(x))dzdx\\
  &=n^{d+2} \int _D \int _{\rr^d} \rho(n(y-x)) \chi _D(y)(f_n(y) -f_n(x))(\varphi(y) -\varphi(x))dydx\\
  &=n^2\int _D \int _{D} \rho_n(y-x) (f_n(y) -f_n(x))(\varphi(y) -\varphi(x))dydx
\end{align*}
which proves \eqref{conv.h1}. It remains to prove the claim \eqref{claim.1}. To this end notice that
\begin{align*}
\label{}
  \int_D\int _{\rr^d}\rho(z)& \Big| \frac{\varphi(x+\eps z)-\varphi(x)}{\eps}\chi(x+\eps z)-z\nabla \varphi(x) \Big|^2dz dx\\
  \lesssim  & \int_D\int _{\rr^d}\rho(z)  \Big| \frac{\varphi(x+\eps z)-\varphi(x)}{\eps}-z\nabla \varphi(x) \Big|^2\chi(x+\eps z)dz dx \\
  &+\int_D\int _{\rr^d}\rho(z) |z\nabla \varphi(x)|^2  |\chi(x+\eps z)-1|^2dz dx.
\end{align*}
The last term goes to zero thanks to the fact that $|z|^2\rho(z)|\nabla \varphi(x)|^2\in L^1(D\times \rr^d)$ and the dominated convergence theorem. For the first term we use the Fourier transform and again the dominated convergence theorem to obtain that
\begin{align*}
\label{}
   \int_D\int _{\rr^d}\rho(z) & \Big| \frac{\varphi(x+\eps z)-\varphi(x)}{\eps}-z\nabla \varphi(x) \Big|^2 dz dx\\
   &\leq \int _{\rr^d}\int _{\rr^d} \rho(z) \int_{\rr^d} \Big| \frac{e^{2\pi i \eps \xi z}-1}{\eps }-2\pi i \xi z\Big|^2 |\hat \varphi (\xi)|^2d\xi\rightarrow 0, \qquad \eps \rightarrow 0.
\end{align*}
The proof is now finished.	
\end{proof}

In our analysis we need a version of the last lemma that will involve also integrals in time and implies convergences in $L^2((0,T)\times D)$. This is inspired in \cite{IgnatIgnat}. The proof follows the same ideas of the previous lemma and hence it is omitted.

\begin{lemma}
	\label{convergence.h1.time} Let $D\subset \rr^d$ be an open set with the extension property. Let $\rho:\rr^d\rightarrow\rr$ be a nonnegative $L^1$ radial function having a second momentum in $L^1(\rr^d)$, $\rho>0$ in neighbourhood of $x=0$, and $\rho_n(x)=n^d\rho(nx)$. Let $f_n$ be a sequence in $L^2((0,T)\times D)$ such that
	\begin{equation}
\label{est.1.878}
  n^2\int _0^T \int_D \int_D \rho_n(x-y)(f_n(x)-f_n(y))^2dxdy\leq M.
\end{equation}
 1. If $f_n\rightharpoonup f $ in $L^2((0,T)\times D)$ then $f\in L^2((0,T),H^1(D))$
  and
  \[
  F_n(x,z)=(\rho(z))^{\frac{1}2}\chi_D(x+\frac zn) \frac {f_n(x+\frac zn) -f_n(x)}{\frac 1n}\rightharpoonup (\rho(z))^{\frac{1}2} z\cdot \nabla f(x)
  \]
  weakly in $L^2((0,T),L_x^2(D)\times L^2_z(\rr^d))$.\\
2. For any $f\in L^2((0,T),H^1(D))$
\begin{equation}
\label{conv.liminf.00}
  \int_{\rr^d}\rho(z)|z|^2dz \int _0^T \int _{D}\nabla f(x)\nabla \varphi(x)dx\leq \liminf_{n\to \infty}  n^2\int _0^T \int_D \int_D \rho_n(x-y)(f(x)-f(y))^2dxdy
\end{equation}
3. If $\varphi\in L^2((0,T),H^1(\rr^d))  $ then
  \begin{align}
\label{conv.h1.99}
n^2   \int_0^T\int_D \int_D \rho_n(x-y)(f_n(x)-f_n(y))&(\varphi(x)-\varphi(y))dxdy\\
&\rightarrow
\nonumber \int_{\rr^d}\rho(z)|z|^2dz \int_0^T \int _{D}\nabla f(x)\nabla \varphi(x)dx.
\end{align}
{Notice that by 1. we have $f\in L^2((0,T),H^1(D))$.}

4. If $D$ is a smooth bounded domain of $\rr^d$, $\rho(x)\geq \rho(y)$ if $|x|\leq |y|$ and
$
\|\partial _t f_n\|_{L^2((0,T),H^{-1}(D))}$
is uniformly bounded
then $\{f_n\}_{n}$ is relatively compact in $L^2(D)$.
\end{lemma}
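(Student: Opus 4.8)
The plan is to deduce parts~1--3 from the corresponding statements of Lemma~\ref{convergence.h1}, which are already established, by carrying one extra integration in the time variable through each argument, and to obtain the compactness statement in part~4 by combining the static spatial compactness of Lemma~\ref{convergence.h1}(4) with the uniform control of $\partial_t f_n$ through an Aubin--Lions--Simon type argument, in the spirit of \cite{simon} and \cite{IgnatIgnat}.

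For part~1 one sets
\[
F_n(t,x,z)=(\rho(z))^{1/2}\,\chi_D\Big(x+\frac zn\Big)\,\frac{f_n\big(t,x+\frac zn\big)-f_n(t,x)}{1/n},
\]
and observes that the change of variables $x+z/n=y$ used in the static proof shows that $\|F_n\|^2_{L^2((0,T)\times D\times\rr^d)}$ equals the left-hand side of \eqref{est.1.878}; hence $(F_n)_n$ is bounded in $L^2$ and, along a subsequence, $F_n\rightharpoonup G$. If $f_n\rightharpoonup f$ in $L^2((0,T)\times D)$, then testing $F_n$ against product functions $\psi(t,x)\phi(z)$ with $\psi\in C_c^\infty((0,T)\times D)$ and integrating by parts in $x$ for a.e. fixed $t$, then integrating in $t$, identifies $G(t,x,z)=(\rho(z))^{1/2}\,z\cdot\nabla f(t,x)$, which in particular gives $\nabla f\in L^2((0,T)\times D)$, i.e. $f\in L^2((0,T),H^1(D))$. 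Part~2 then follows by applying the static inequality \eqref{conv.liminf} for a.e. fixed $t$ to the constant sequence $f_n\equiv f$, integrating in $t$ and invoking Fatou's lemma. For part~3 one first proves, exactly as in the proof of the claim \eqref{claim.1} (Fourier transform plus dominated convergence, now with the harmless extra time integration), that
\[
\eta_n(t,x,z)=(\rho(z))^{1/2}\,\chi_D\Big(x+\frac zn\Big)\,\frac{\varphi\big(t,x+\frac zn\big)-\varphi(t,x)}{1/n}\longrightarrow(\rho(z))^{1/2}\,z\cdot\nabla\varphi(t,x)
\]
\emph{strongly} in $L^2((0,T)\times D\times\rr^d)$ whenever $\varphi\in L^2((0,T),H^1(\rr^d))$; pairing the weak limit $G$ from part~1 with this strong limit and undoing the change of variables yields \eqref{conv.h1.99}.

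The only genuinely new point is part~4, which I would handle as follows. Fix $0<t_1<t_2<T$ and set $g_n:=\int_{t_1}^{t_2}f_n(t,\cdot)\,dt$. By the Cauchy--Schwarz inequality in $t$ the function $g_n$ satisfies the static hypothesis \eqref{est.1.6} with constant $(t_2-t_1)M$, and it is bounded in $L^2(D)$; hence Lemma~\ref{convergence.h1}(4) shows that $\{g_n\}_n$ is relatively compact in $L^2(D)$. On the other hand, the uniform bound on $\|\partial_t f_n\|_{L^2((0,T),H^{-1}(D))}$ yields equicontinuity of time translations in the weaker norm, $\sup_n\|f_n(\cdot+h)-f_n\|_{L^2((0,T-h),H^{-1}(D))}\le C|h|\to0$. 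Combined with the uniform spatial equicontinuity that underlies Lemma~\ref{convergence.h1}(4)---which, after a standard mollification in space, allows one to pass from the $H^{-1}(D)$ time-translation estimate back to an $L^2(D)$ one, uniformly in $n$---these are precisely the hypotheses of Simon's compactness criterion, and give that $\{f_n\}_n$ is relatively compact in $L^2((0,T),L^2(D))=L^2((0,T)\times D)$. The hard part is this last bridging step, together with the fact that the radial monotonicity and the positivity of $\rho$ near the origin enter only through Lemma~\ref{convergence.h1}(4) (see \cite{ellibro,MR1942116,ponce}); once that static statement is granted, the time-dependent version is a routine, if slightly technical, combination, which is why the details are omitted.
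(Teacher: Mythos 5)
Your proposal is correct and follows essentially the route the paper intends: the paper omits the proof, stating only that it follows the same ideas as Lemma \ref{convergence.h1} (inspired by \cite{IgnatIgnat}), and your parts 1--3 are exactly those static arguments carried through the extra time integration, while your part 4 is the natural Simon-type combination of the compactness of time averages with the $H^{-1}$ time-translation bound. The one step you leave implicit --- the uniform-in-$n$ smallness of $\|f_n-f_n\ast\varphi_\delta\|_{L^2}$ deduced from the nonlocal energy bound, including near $\partial D$ --- is precisely the estimate underlying Lemma \ref{convergence.h1}(4) in \cite{ellibro} (see also \cite{IgnatIgnat,ponce}), so invoking it is legitimate, though a complete write-up should state it explicitly rather than through the black-box statement of the static lemma.
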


Finally, we include a lemma in the one dimensional case.

\begin{lemma}
\label{adjoint.intervals}
Let $\rho$ a nonincreasing $L^1$ radially symmetric function with $|r|^2\rho(r)\in L^1(\rr)$. Let $-\infty\leq a<0<b\leq +\infty$.
Then, for any $\varphi\in H^1(a,b)$, it holds that
\begin{equation}
\label{limit.disjoint.intervals}
 \eps^{-2} \lim_{\eps \rightarrow 0} \int _a^0 \int _0^b \rho_\eps(x-y)(\varphi(x)-\varphi(y))^2dxdy=0.
\end{equation}
\end{lemma}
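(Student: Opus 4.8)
The plan is to prove the statement in the form $\lim_{\eps\to0}\eps^{-2}\int_a^0\int_0^b\rho_\eps(x-y)(\varphi(x)-\varphi(y))^2\,dx\,dy=0$, where $\rho_\eps(z)=\eps^{-1}\rho(z/\eps)$ so that $\eps^{-2}\rho_\eps(z)=\eps^{-3}\rho(z/\eps)$. The whole point is that the kernel concentrates on the diagonal $x=y$, which meets the rectangle $(a,0)\times(0,b)$ only at the corner $(0,0)$; so after a rescaling the problem becomes a single application of dominated convergence. First I would record the two elementary facts: for $x\in(a,0)$ and $y\in(0,b)$ one has $|x-y|=y-x=|x|+y$, and since $\varphi\in H^1(a,b)$ is absolutely continuous, $(\varphi(x)-\varphi(y))^2=\big(\int_x^y\varphi'(r)\,dr\big)^2\le(y-x)\int_x^y\varphi'(r)^2\,dr$.

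Next I would perform the change of variables $x=-\eps s$, $y=\eps t$ with $s,t>0$. Using $dx\,dy=\eps^2\,ds\,dt$, the radial symmetry of $\rho$ (to write $\rho(-(s+t))=\rho(s+t)$), and the bound above (which contributes a factor $y-x=\eps(s+t)$), all powers of $\eps$ cancel and one gets
\[
\eps^{-2}\int_a^0\int_0^b\rho_\eps(x-y)(\varphi(x)-\varphi(y))^2\,dx\,dy\;\le\;\int_0^\infty\!\int_0^\infty\rho(s+t)\,(s+t)\,\Phi_\eps(s,t)\,ds\,dt,
\]
where $\Phi_\eps(s,t):=\mathbf{1}_{\{-\eps s>a\}}\,\mathbf{1}_{\{\eps t<b\}}\int_{-\eps s}^{\eps t}\varphi'(r)^2\,dr$ (the range restrictions disappear when $a=-\infty$ or $b=+\infty$, and enlarging the domain to the full quadrant only weakens the inequality). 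The structural input here is that the weight $\rho(s+t)(s+t)$ is integrable on $(0,\infty)^2$: by the coarea-type identity $\int_0^\infty\int_0^\infty g(s+t)\,ds\,dt=\int_0^\infty w\,g(w)\,dw$ with $g(w)=\rho(w)w$ one obtains $\int_0^\infty\int_0^\infty\rho(s+t)(s+t)\,ds\,dt=\int_0^\infty w^2\rho(w)\,dw=\tfrac12\int_{\rr}|r|^2\rho(r)\,dr<\infty$, which is exactly the hypothesis $|r|^2\rho(r)\in L^1(\rr)$.

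Finally I would invoke the dominated convergence theorem on the measure space $\big((0,\infty)^2,\ \rho(s+t)(s+t)\,ds\,dt\big)$: for each fixed $(s,t)$ the interval $(-\eps s,\eps t)$ shrinks to the point $0$ as $\eps\to0$, hence $\Phi_\eps(s,t)\to0$ by absolute continuity of the Lebesgue integral of $\varphi'^2\in L^1(a,b)$; and $0\le\Phi_\eps(s,t)\le\|\varphi'\|_{L^2(a,b)}^2$ provides the integrable dominating function. Therefore the right-hand side tends to $0$, and the lemma follows. The only non-bookkeeping point — and hence the step I expect to be the crux — is resisting the temptation to bound $\int_x^y\varphi'^2$ by $\|\varphi'\|_{L^2(a,b)}^2$ (which yields merely a uniform bound, not decay) and instead keeping the localized energy $\int_{-\eps s}^{\eps t}\varphi'^2$, letting the concentration of the kernel near the corner $(0,0)$ produce the vanishing.
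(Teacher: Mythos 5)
Your proof is correct, and it takes a genuinely different route from the one in the paper. The paper's argument is a cancellation of energies: it extends $\varphi$ to $\widetilde\varphi\in H^1(\rr)$, bounds twice the cross term by the corresponding integral over $(-\infty,0)\times(0,\infty)$, writes that as the full rescaled double integral over $\rr\times\rr$ minus the two integrals over $(-\infty,0)^2$ and $(0,\infty)^2$, and then passes to the limit with the machinery already set up in the Appendix: the integral over all of $\rr$ converges exactly to $\| r^2\rho\|_{L^1(\rr)}\int_\rr\widetilde\varphi_x^2$ (part 3 of Lemma \ref{convergence.h1.time} applied with $f_n=\varphi$), while by the liminf inequality \eqref{conv.liminf.00} each half-line term is asymptotically at least the corresponding Dirichlet energy, so the limsup of the cross term is at most zero. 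Your argument is instead a direct concentration estimate at the corner $(0,0)$: the substitution $x=-\eps s$, $y=\eps t$ cancels all powers of $\eps$, Cauchy--Schwarz along the segment keeps the localized energy $\int_{-\eps s}^{\eps t}\varphi'(r)^2\,dr$ rather than the global one (as you note, this is the crux), and dominated convergence with respect to the finite measure $\rho(s+t)(s+t)\,ds\,dt$, whose total mass is $\tfrac12\int_\rr r^2\rho(r)\,dr$, finishes the proof. As for what each approach buys: yours is self-contained (no $H^1(\rr)$-extension, no appeal to the Bourgain--Brezis--Mironescu-type limit and liminf statements of Lemma \ref{convergence.h1.time}), it uses only symmetry, nonnegativity (which indeed follows from the monotonicity plus integrability assumed in the lemma) and the finite second moment, and it actually yields a quantitative bound in terms of the energy of $\varphi$ near the junction and the tail of the second moment of $\rho$, hence a rate if one wanted it. The paper's proof, in exchange, is only a few lines given results it needs elsewhere anyway, and its cancellation trick works verbatim in any situation where the liminf inequality and the exact limit on the whole space are available, without requiring a pointwise fundamental-theorem-of-calculus estimate along segments.
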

\begin{proof}We use \eqref{conv.liminf.00} (see also \cite[Th. 2, Remark 5]{MR1942116})
to obtain that
	\begin{equation}
\label{lema.brezis}
 \| r^2\rho(r) \|_{L^1(\rr)}\int_{I} f_x^2 \leq \liminf_{\eps \rightarrow 0} \eps^{-2}\int_{I}\int_{I} \rho_\eps(x-y)(f(x)-f(y))^2dxdy,
\end{equation}
	holds for for any  interval $I$ of $\rr$ and $f\in H^1(I)$. Moreover, applying  \eqref{conv.h1.99}  to $f_n=\varphi=f$ we have equality when $I=\rr^d$.
	
	Since we are in dimension one,  any  function $\varphi\in H^1(a,b)$ can be extended to a function $\widetilde \varphi\in H^1(\rr)$. Then
	\begin{align*}
\label{}
   2\eps^{-2}  \int _a^0 \int _0^b &\rho_\eps(x-y)(\varphi(x)-\varphi(y))^2dxdy\leq
     2\eps^{-2}  \int _{-\infty}^0\int _0^{\infty} \rho_\eps(x-y)(\widetilde\varphi(x)-\widetilde\varphi(y))^2dxdy\\
     =&\eps^{-2}\int_\rr \int_\rr  \rho_\eps(x-y)(\widetilde\varphi(x)-\widetilde\varphi(y))^2dxdy\\
     & - \eps^{-2}\int _{-\infty}^0 \int _{-\infty}^0  \rho_\eps(x-y)(\widetilde\varphi(x)-\widetilde\varphi(y))^2dxdy \\
     &-\eps^{-2} \int _0^\infty \int _0^\infty \rho_\eps(x-y)(\widetilde\varphi(x)-\widetilde\varphi(y))^2dxdy.
\end{align*}
Using \eqref{lema.brezis} with $I=(-\infty,0)$ and $I=(-\infty,0)$ as well as the fact that it becomes equality when $I=\rr$ we get that
\begin{align*}
  \limsup_{\eps\rightarrow 0}  2\eps^{-2} & \int _a^0 \int _0^b \rho_\eps(x-y)(\varphi(x)-\varphi(y))^2dxdy\\
  &\leq
 \| r^2\rho(r) \|_{L^1(\rr)}\Big[\int_\rr \varphi_x^2-\int_{-\infty}^0 \varphi_x^2 -\int_0^\infty \varphi_x^2
\Big]=0,
\end{align*}
which finishes the proof.
\end{proof}

{\bf Acknowledgements.}

Part of this work was done during a visit of JDR to the Institute of Mathematics ``Simion Stoilow'' at Bucharest,
he is grateful for the friendly and stimulating working atmosphere found there.

 L. I. was partially supported by a grant of Ministry of Research and Innovation, CNCSUEFISCDI, project PN-III-P1-1.1-TE-2016- 2233, within PNCDI III. J.D.R. partially supported by CONICET grant PIP GI No 11220150100036CO (Argentina), PICT-2018-03183 (Argentina) and UBACyT grant 20020160100155BA (Argentina).

\end{document}